\newcommand{\dd}{\mathsf{d}}
\newcommand{\g}{\mathfrak}
\newcommand{\ood}{{\overline{1}}}
\newcommand{\eev}{{\overline{0}}}
\newcommand{\End}{\mathrm{End}}
\newcommand{\Lie}{\mathrm{Lie}}
\newcommand{\Ad}{\mathrm{Ad}}
\newcommand{\ad}{\mathrm{ad}}
\newcommand{\sseq}{\subseteq}
\newcommand{\N}{\mathbb N}
\newcommand{\R}{\mathbb R}
\newcommand{\C}{\mathbb C}
\newcommand{\K}{\mathbb K}
\newcommand{\eps}{\varepsilon}
\newcommand{\nrm}{\mathsf q}
\newcommand{\tr}{\mathrm{tr}}
\newtheorem{theorem}{\textbf{Theorem}}[section]
\newtheorem{deff}[theorem]{\textbf{Definition}}
\newtheorem{proposition}[theorem]{\textbf{Proposition}}
\newtheorem{lemma}[theorem]{\textbf{Lemma}}
\newtheorem*{notation}{\textbf{Notation}}
\newcommand{\fctr}{\mathsf}
\newenvironment{definition}{\begin{deff}\rmfamily\upshape}{\end{deff}}
\newtheorem{corollary}[theorem]{\textbf{Corollary}}
\newtheorem{remk}[theorem]{\textbf{Example}}
\newtheorem*{remark}{\textbf{Remark}}
\newenvironment{rmk}{\begin{remark}\rmfamily\upshape }{\end{remark}}
\newenvironment{rmmk}{\begin{remk}\rmfamily\upshape}{\end{remk}}
\newenvironment{nota}{\begin{notation}\rmfamily\upshape}{\end{notation}}
\newcommand{\oline}{\overline} 
\newcommand{\cH}{\mathscr H}
\newcommand{\subeq}{\subseteq} 
\newcommand{\la}{\langle}
\newcommand{\ra}{\rangle}
\begin{document}

\title[categories and restriction functors]{Categories of unitary representations of Banach--Lie supergroups and restriction functors}
\author{St\'{e}phane Merigon,\ \ Karl--Hermann Neeb,\ \ Hadi Salmasian}
\address{St\'{e}phane Merigon : Department of Mathematics\\ FAU Erlangen--N\"{u}rnberg\\ 
Bismarckstrasse 1 1/2, 91054 Erlangen, Germany }
\email{merigon@mi.uni-erlangen.de}

\address{Karl--Hermann Neeb : Department of Mathematics\\ FAU Erlangen--N\"{u}rnberg\\ 
Bismarckstrasse 1 1/2, 91054 Erlangen, Germany}
\email{neeb@mi.uni-erlangen.de}

\address{Hadi Salmasian : Department of Mathematics and Statistics\\
University of Ottawa\\
585 King Edward Ave.\\
Ottawa, ON, K1N 6N5\\
Canada}
\email{hsalmasi@uottawa.ca}
\thanks{Hadi Salmasian was supported by a Discovery Grant from the Natural Sciences and Engineering Research Council of Canada and an Alexander von Humboldt Fellowship for Experienced Researchers.}

\keywords{Banach--Lie supergroups, unitary representations, smooth vectors, analytic vectors.}
\subjclass[2010]{17B65, 22E66.}
\maketitle

\begin{abstract}
We prove that the categories of smooth and analytic unitary 
representations of Banach--Lie supergroups are well-behaved under restriction functors, in the sense that the
restriction of a representation to an integral subsupergroup is well-defined. We also prove that the category of analytic representations is isomorphic to a subcategory of the category of smooth representations. 
These facts  are needed as a 
crucial first step to a rigorous treatment of the analytic theory of unitary representations
 of Banach--Lie supergroups. They extend the known results for finite dimensional Lie supergroups. In the infinite dimensional case the proofs require several new ideas. As an application, we give an analytic realization of the oscillator representation of the restricted orthosymplectic Banach--Lie supergroup.

\end{abstract}
\section{Introduction}
In the last two decades, unitary representations of finite and infinite dimensional Lie supergroups and Lie superalgebras have received  growing interest from both mathematicians and physicists.
These unitary representations appear in  
the classification of free relativistic superparticles in SUSY quantum mechanics 
(for example, see \cite{FSZ81} and \cite{SS74}) 
which relies on the \emph{little supergroup method}, an idea originating from the classical works 
of Mackey and Wigner.  
Unitary representations of the $N=1$ super Virasoro algebras were classified by  Friedan--Qiu--Shenker, Goddard--Kent--Olive, and Sauvageot
\cite{friedanqiu}, \cite{goddard}, \cite{sauvageot}.
For the $N=2$ super Virasoro algebras, the results are due to Boucher--Friedan--Kent and Iohara
\cite{friedankent2}, \cite{iohara}.
 Kac and Todorov classified unitary representations of 
superconformal current algebras \cite{kactodor}. 
Using an analogue of the Sugawara construction, Jarvis and Zhang 
constructed unitary representations of untwisted affine Lie superalgebras \cite{jarviszhang}. 
Unitary highest weight modules of affine Lie superalgebras were also studied by Jakobsen and Kac  \cite{jakobsen2}.

Much of the research done on unitary representations 
is algebraic, i.e., studies them as unitarizable modules over Lie superalgebras. A mathematically rigorous investigation of 
analytic aspects of unitary representations is more recent
\cite{allhilglaub}, \cite{varadarajan}, \cite{salmasian}. 

In \cite{varadarajan} the authors propose an approach to harmonic analysis on Lie supergroups.
One key idea in their work is to use the equivalence between
the category of Lie supergroups
and the category of \emph{Harish--Chandra pairs} 
$(G,\g g)$
\cite[Sec. 3.8]{delignemorgan}, \cite[Sec. 3.2]{kostant}.
The advantage of using this equivalence is that for Harish--Chandra pairs 
the definition of a unitary representation is more concrete. 
Roughly speaking, 
a Harish--Chandra pair is an ordered pair $(G,\g g)$ where $G$ is a Lie group, 
$\g g=\g g_\eev\oplus\g g_\ood$ is a Lie superalgebra, $\g g_\eev=\Lie(G)$, and 
there is an action of $G$ on $\g g$ which is compatible with the adjoint action of $\g g$.
(For a precise definition, see Definition \ref{def-blsupergroup}.)
A unitary representation of $(G,\g g)$ is a triple $(\pi,\mathscr H,\rho^\pi)$ where $(\pi,\mathscr H)$ 
is a unitary representation of the Lie group $G$ (in the sense of \cite[Sec. 1.2]{varabook}) and $\rho^\pi$ is a  representation of
the Lie superalgebra $\g g$, realized on a dense subspace of $\mathscr H$ 
(consisting of smooth vectors) which is compatible with $\pi$ on $\g g_\eev$ (see Definition 
\ref{defi-smoothanalytic}).

An important observation
in \cite{varadarajan} is that if $x\in\g g_\ood$ then 
\[
\rho^\pi(x)^2=\frac{1}{2}\rho^\pi([x,x])=
\frac{1}{2}\dd\pi([x,x])
\] which suggests that
$\rho^\pi(x)$ should be
an unbounded operator on $\mathscr H$, i.e., it can only be densely defined. 
Therefore one needs to fix a common domain for the operators $\rho^\pi(x)$.
For instance, one can choose the common domain 
to be $\mathscr H^\infty$ or $\mathscr H^\omega$, the subspaces of smooth or analytic vectors of
the unitary representation
$(\pi,\mathscr H)$, which lead to categories $\fctr{Rep}^\infty(G,\g g)$ and $\fctr{Rep}^\omega(G,\g g)$ of smooth and analytic representations of 
$(G,\g g)$. With any choice of such a common domain, we are lead to the following two questions.
\begin{enumerate}
\item[(i)] What is the relation between the categories of unitary representations of $(G,\g g)$ when the common domain for the realization of $\rho^\pi$ varies? For instance, what is the relation between 
$\fctr{Rep}^\infty(G,\g g)$ and $\fctr{Rep}^\omega(G,\g g)$? 

Here the issue is that if $(\pi,\rho^\pi,\mathscr H)$ is an object of $\fctr{Rep}^\omega(G,\g g)$, then 
the action of $\g g_\ood$ is defined on $\mathscr H^\omega$. In general $\mathscr H^\omega\subsetneq\mathscr H^\infty$ and therefore a~priori it is not 
obvious why $(\pi,\rho^\pi,\mathscr H)$ 
is also an object of $\fctr{Rep}^\infty(G,\g g)$.

\item[(ii)] Let $(H,\g h)$ be a subsupergroup \footnote{Because of the equivalence between 
the categories of Lie supergroups and Harish--Chandra pairs, it will be harmless 
(and simpler for our presentation) 
to refer to a Harish--Chandra pair as a Lie supergroup. When the Harish--Chandra pair is modeled on a Banach space, we will call it a Banach--Lie supergroup (see Definition \ref{def-blsupergroup}).} of $(G,\g g)$. Are there well-defined restriction functors 
\[
\fctr{Res}^\infty:\fctr{Rep}^\infty(G,\g g)\mapsto \fctr{Rep}^\infty(H,\g h)\]
 and 
\[
\fctr{Res}^\omega:\fctr{Rep}^\omega(G,\g g)\mapsto\fctr{Rep}^\omega(H,\g h)?
\] 

Here the issue 
is the following. Let $(\pi,\rho^\pi,\mathscr H)$ be a unitary representation of $(G,\g g)$. 
Denote the subspace of smooth vectors of $(\pi,\mathscr H)$ (respectively, $(\pi\big|_H,\mathscr H)$) by
$\mathscr H^\infty_G$ (respectively, $\mathscr H^\infty_H$). In general
$\mathscr H^\infty_G\subsetneq\mathscr H^\infty_H$, and 
a priori the operators $\rho^\pi(x)$, where $x\in\g h_\ood$, are only defined on $\mathscr H^\infty_G$. Consequently, 
restricting the actions naively does not lead to an object of the category $\fctr{Rep}^\infty(H,\g h)$.
\end{enumerate}
The answers to the above questions are crucial to obtaining well-behaved categories of unitary representations for
Harish--Chandra pairs. For (finite dimensional) Lie supergroups they
 are addressed in \cite{varadarajan}.

The main goal of this article is to answer the above questions for Harish--Chandra pairs associated to Banach--Lie supergroups. Molotkov's work (which is extended in Sachse's Ph.D. thesis) develops a functorial 
theory of Banach--Lie supermanifolds which specializes to the  (finite dimensional) 
Berezin--Leites--Kostant theory  
and the (possibly infinite dimensional) DeWitt--Tuynman theory 
\cite{alldridge}. In the functorial approach one can 
associate Harish--Chandra pairs to Banach--Lie supergroups as well. 
We believe that the success of the Harish--Chandra pair approach 
in the finite dimensional case justifies
their use in studying harmonic analysis on Banach--Lie supergroups.

The direction we choose for the formulation and proofs of our results is 
similar in spirit to \cite{varadarajan}. However, some of 
the arguments used in \cite{varadarajan}, 
in particular the proofs of \cite[Prop. 1 and 2]{varadarajan},
depend crucially on finiteness of dimension. In this article we present new arguments which generalize to the Banach--Lie case. One of our tools is 
the theory of analytic maps between Banach spaces. 
Of the multitude of existing variations of this theory, 
the most relevant to this article is the work of 
Bochnak and Siciak \cite{bochnaksiciak1}, \cite{bochnaksiciak}, \cite{siciakLe}. Their work generalizes
the results in the book by Hille and Phillips \cite{hillephilips}.

We also present two applications of our techniques. The first application
is that when $g$ is in the connected component of identity of $G$, the conjugacy invariance relation 
\begin{equation}
\label{conjugeqn}
\pi(g)\rho^\pi(x)\pi(g)^{-1}=\rho^\pi(\Ad(g)x) \text{ for every }x\in\g g
\end{equation}
follows from the remaining assumptions 
in the definition 
of a unitary representation of $(G,\g g)$.
(See Proposition \ref{proposition-conjugacy} below.)
Such a conjugacy invariance relation  
is one of the assumptions of \cite[Def. 2]{varadarajan}. 
As a consequence, we obtain a
reformulation of the definition of a unitary representation (see Definition \ref{defi-smoothanalytic}) which makes
it more practical than the original form given in \cite[Def. 2]{varadarajan}, because in explicit examples checking that the infinitesimal action satisfies the bracket relation is easier than checking the conjugacy invariance relation \eqref{conjugeqn}.

The second application is an analytic realization of the oscillator representation of the restricted orthosymplectic Banach--Lie supergroup. Again the main 
issue is to show that the action of the odd part is defined on the subspaces of 
smooth and analytic vectors and leaves them invariant. We use a general statement, i.e., Theorem \ref{thm-stabili}, which we expect to be useful in a variety of situations, for instance when one is interested in integrating a representation of a Banach--Lie superalgebra.

This article is organized as follows. In Section \ref{sec-normoncomplex} we introduce our notation and basic definitions and prove some general lemmas which will be used in the later sections. In Section 
\ref{smoothvec} we state some general facts about smooth and analytic vectors of unitary representations of Banach--Lie groups. Section \ref{sec-blsupergps} is devoted to the proof of our main results, Theorems 
\ref{thm-stabili} and  \ref{thm-restfunct}. 
In Section \ref{sec-restricted} we give a realization of the oscillator representation of the restricted orthosymplectic 
Banach--Lie supergroup.
In Appendix \ref{exampleanalytic} we give an example of a smooth unitary representation of a Banach--Lie group which has no nonzero analytic vectors. In Appendix \ref{examplebounded} we give an example of an analytic 
unitary representation of a Banach--Lie group which has no nonzero bounded vectors.
Appendix \ref{section-appendix-analyticmaps} contains the background material on analytic maps between Banach spaces.

This article contains general results about arbitrary Banach--Lie supergroups. These results 
are needed for the study of unitary representations of 
concrete examples. 
In our forthcoming works we will study 
unitary representations of Banach--Lie supergroups corresponding to affine Lie superalgebras, and
the supergroup version of the Kirillov--Ol'shanskii classification of unitary representations of 
the infinite dimensional unitary group \cite{kirillov}, \cite{olshanski}.

\section{Notation and preliminaries}
\label{sec-normoncomplex}
If $\mathscr B$ is a real Banach space with norm $\|\cdot\|_\R$
then we define its complexification as the complex Banach space with underlying space 
$\mathscr B^\C=\mathscr B\otimes_\R\C$ 
and with norm $\|\cdot\|_\C$, where for every $v\in\mathscr B^\C$ we set 
\begin{equation*}
\|v\|_\C=\inf\{\ |\zeta_1|\cdot\|v_1\|_\R+\cdots+|\zeta_k|\cdot\|v_k\|_\R\ : \ v=v_1\otimes_\R\zeta_1+\cdots+v_k\otimes_\R\zeta_k\ \}.
\end{equation*}
Note that if $v=v_1\otimes_\R 1+v_2\otimes_\R i$ then 
\begin{equation}
\label{equivalenceofnorms}
\max\{\|v_1\|_\R,\|v_2\|_\R\}\leq \|v\|_\C\leq \|v_1\|_\R+\|v_2\|_\R.
\end{equation}

All Banach and Hilbert spaces will be separable. Let $\mathscr H$ be a real or complex Hilbert space. 
If $T:\mathscr H\to \mathscr H$ is a bounded linear operator on $\mathscr H$, then  $\|T\|_\mathrm{Op}$
denotes the operator norm of $T$, and if $T$ is a Hilbert--Schmidt operator, then $\|T\|_\mathrm{HS}$ denotes its Hilbert--Schmidt norm. 

Now assume $\mathscr H$ is a complex Hilbert space. The group of unitary linear transformations on $\mathscr H$ is denoted by $\mathrm U(\mathscr H)$.
The domain of an unbounded linear operator $T$ on $\mathscr H$ is denoted by $\mathcal D(T)$,
and if $\mathscr B\sseq \mathcal D(T)$ is a subspace, then $T\big|_\mathscr B$ denotes 
the restriction of $T$ to $\mathscr B$. If $S$ and $T$ are two unbounded operators on $\mathscr H$ then
their sum $S+T$ is an operator with domain $\mathcal D(S+T)=\mathcal D(S)\cap\mathcal D(T)$, and their product
$ST$ is an operator with domain $\mathcal D(ST)=\{\,v\in\mathcal D(T)\,:\,Tv\in\mathcal D(S)\,\}$.
For two unbounded operators $S$ and $T$, we write 
$S\prec T$ if $\mathcal D(S)\sseq D(T)$ and $T\big|_{\mathcal D(S)}=S$.

The adjoint of a linear operator $T$ is denoted by $T^*$. If $T$ is closable, then
its closure is denoted by $\overline T$.
For every integer $n>1$ we set \[
\mathcal D(T^n)=\{\,v\in\mathcal D(T)\ :\ Tv\in\mathcal D(T^{n-1})\,\}.
\] 
We also set $\mathcal D^\infty(T)=\bigcap_{n=1}^\infty D(T^n)$. If $v\in \mathcal D^\infty(T)$ satisfies
\[
\sum_{n=0}^\infty \frac{t^n}{n!}\|T^nv\|<\infty \text{ for some $t>0$}
\]
then $v$ is called an \emph{analytic vector} of $T$. The space of analytic vectors of $T$ is denoted 
by $\mathcal D^\omega(T)$.

By a $\mathbb{Z}_2$-graded Hilbert space $\mathscr H=\mathscr H_\eev\oplus\mathscr H_\ood$ we simply mean 
an orthogonal direct sum of two complex Hilbert spaces
$\mathscr H_\eev$ and $\mathscr H_\ood$. (For another equivalent definition, see \cite[Sec. 2.1]{varadarajan}.)

All Banach--Lie groups are real analytic.
Let $G$ be a Banach--Lie group and $\g g=\Lie(G)$.
The identity component of $G$ is denoted by $G^\circ$.
We assume, without loss of generality, that the norm inducing the topology of the Banach--Lie algebra $\g g$ satisfies  
$\|[x,y]\|\leq \|x\|\cdot\|y\|$ for every $x,y\in\g g$.

By a \emph{Banach--Lie superalgebra} we mean a Lie superalgebra $\g g=\g g_\eev\oplus \g g_\ood$ 
over $\R$ or $\C$ with the following two properties.
\begin{itemize}
\item[(i)] $\g g$ is a Banach space with a norm $\|\cdot\|$ satisfying 
\begin{equation}
\label{eqn-normineq}
\|[x,y]\|\leq \|x\|\cdot\|y\|\text{ for every }x,y\in\g g.
\end{equation}
\item[(ii)] $\g g_\eev$ and $\g g_\ood$ are closed subspaces of $\g g$. 
\end{itemize}
\begin{rmk}
Consider the norm $\|\cdot\|'$ on $\g g$ which is defined as follows. For every $x\in \g g$, we write
$x=x_\eev+x_\ood$ where $x_\eev\in \g g_\eev$ and $x_\ood\in \g g_\ood$, and set 
$\|x\|'=\|x_\eev\|+\|x_\ood\|$.
From the definition of a Banach--Lie superalgebra it follows that the norms $\|\cdot\|$ and $\|\cdot\|'$ are equivalent.
\end{rmk}
The Banach--Lie group of continuous even  automorphisms (i.e., automophisms which preserve parity) 
of 
a Banach--Lie superalgebra $\g g$ is denoted by
$\mathrm{Aut}(\g g)$.  
\begin{definition}
\label{def-blsupergroup}
A \emph{Banach--Lie supergroup} is an ordered pair $(G,\g g)$ with the following properties.
\begin{itemize}
\item[(i)] $G$ is a Banach--Lie group.
\item[(ii)] $\g g$ is a Banach--Lie superalgebra over $\R$.
\item[(iii)] $\g g_\eev=\mathrm{Lie}(G)$. 
\item[(iv)] There exists a morphism of Banach--Lie groups 
$\Ad:G\to\mathrm{Aut}(\g g)$ 
such that 
\[
\dd_e\Ad(x)=\ad_x \text{ for every }x\in\g g_\eev,
\]
where $\dd_e\Ad$ denotes the differential of $\Ad$ at $e\in G$, and $\ad_x(y)=[x,y]$.
\end{itemize}
\end{definition}
We refer to the morphism $\Ad:G\to \mathrm{Aut}(\g g)$ of Definition \ref{def-blsupergroup}(iv) by the \emph{adjoint action} of 
$G$ on $\g g$.
Observe that the map 
\[
G\times \g g\to \g g\; ,\; (g,x)\mapsto \Ad(g)x
\]
is analytic. 

If $\g g$ is a
real Lie superalgebra, then its complexification is denoted by $\g g^\C$. 
It is easily seen that if $\g g$ is a real Banach--Lie superalgebra, then 
$\g g^\C$ with the norm defined in the beginning of Section \ref{sec-normoncomplex}  is a complex Banach--Lie superalgebra. After a suitable scaling, we can
assume that the norm $\|\cdot\|$ chosen on $\g g^\C$ satisfies 
\[
\|[x,y]\|\leq\|x\|\cdot\|y\| \text{ for every }x,y\in\g g^\C.
\] 

Let $(G,\g g)$ be a Banach--Lie supergroup. An \emph{integral subsupergroup} of $(G,\g g)$ is a Lie supergroup
$(H,\g h)$ with a morphism $(\phi,\varphi):(H,\g h)\to(G,\g g)$ such that $\phi:H\to G$ is an injective homomorphism of
Banach--Lie groups,
$\varphi:\g h\to\g g$ is a continuous injective homomorphism of Lie superalgebras, and 
$\dd_e\phi=\varphi\big|_{\g h_\eev}$.

A \emph{unitary representation} of a Banach--Lie group $G$ is an ordered pair $(\pi,\mathscr H)$ such that 
$\pi:G\to\mathrm U(\mathscr H)$ is a group homomorphism and
for every $v\in\mathscr H$ the orbit map 
\begin{equation}
\label{eqn:orbitmap}
\pi^v:G\to\mathscr H\;,\; \pi^v(g)=\pi(g)v
\end{equation}
is continuous.
The restriction of $(\pi,\mathscr H)$ to a subgroup $H$ of $G$ is denoted by $(\pi\big|_H,\mathscr H)$.

We conclude this section with a few lemmas about unbounded operators.

\begin{lemma}
\label{lem-criteriaforesa}
Let $T$ be a self-adjoint operator on a complex Hilbert space $\mathscr H$ and 
$\mathscr L\sseq \mathcal D(T)$
be a dense subspace of $\mathscr H$ satisfying at least one of the following properties.
\begin{enumerate}
\item[(a)] For every $t\in\R$, we have $e^{itT}\mathscr L\sseq \mathscr L$.
\item[(b)] Every $v\in \mathscr L$ is an analytic vector for $T$.
\end{enumerate} 
Then $T\big|_\mathscr L$ is essentially self-adjoint.
\end{lemma}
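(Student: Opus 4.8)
The plan is to prove both cases by reducing to the classical Nelson-type criteria for essential self-adjointness. Recall that for a symmetric operator $S$ on $\mathscr H$, essential self-adjointness is equivalent to the condition that $\mathrm{ran}(S \pm i)$ (equivalently the deficiency spaces $\ker(S^* \mp i)$) are trivial. So in each case I would take $w \in \mathscr H$ with $(T\big|_{\mathscr L})^* w = \pm i w$, i.e. $\langle (T \pm i) v, w \rangle = 0$ for all $v \in \mathscr L$, and show $w = 0$.

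For case (a): since $\mathscr L \subseteq \mathcal D(T)$ and $e^{itT}$ leaves $\mathscr L$ invariant, the function $t \mapsto \langle e^{itT} v, w \rangle$ is, for each fixed $v \in \mathscr L$, differentiable with derivative $i\langle T e^{itT}v, w\rangle = i \langle e^{itT}(Tv), w\rangle$ (using $e^{itT}v \in \mathscr L \subseteq \mathcal D(T)$ and that $e^{itT}$ commutes with $T$ on $\mathcal D(T)$). Now suppose $(T\big|_{\mathscr L})^* w = \lambda w$ with $\lambda = \pm i$; then setting $f(t) = \langle e^{itT}v, w\rangle$ we get $f'(t) = i\langle T(e^{itT}v), w\rangle = i\langle e^{itT}v, (T\big|_{\mathscr L})^* w\rangle = i\bar\lambda f(t)$. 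With $\lambda = \pm i$ this gives $f'(t) = \pm f(t)$, so $f(t) = e^{\pm t} f(0)$; but $|f(t)| = |\langle e^{itT}v, w\rangle| \le \|v\|\,\|w\|$ is bounded in $t$, forcing $f(0) = \langle v, w\rangle = 0$. Since $\mathscr L$ is dense, $w = 0$. (Alternatively one can invoke the standard fact that a symmetric operator whose closure generates a one-parameter group, or which has a dense set of vectors taken into itself by a unitary group whose generator extends it, is essentially self-adjoint; but the direct computation above is self-contained.)

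For case (b): this is precisely Nelson's analytic vector theorem — a symmetric operator with a dense domain consisting of analytic vectors is essentially self-adjoint. I would simply cite this (e.g. Reed--Simon, or Nelson's original paper), after noting that $T\big|_{\mathscr L}$ is symmetric because $\mathscr L \subseteq \mathcal D(T)$ and $T$ is self-adjoint, and that every $v \in \mathscr L$ being an analytic vector for $T$ makes it an analytic vector for $T\big|_{\mathscr L}$ (the powers $T^n v$ are computed inside $\mathcal D(T) \supseteq \mathscr L$, and analyticity of $v$ for $T$ means $v \in \mathcal D^\omega(T) \subseteq \mathcal D^\infty(T\big|_{\mathscr L})$ with the same norm estimates). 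If a fully self-contained argument is wanted, one runs the deficiency-space argument again: for $w$ with $(T\big|_{\mathscr L})^* w = \pm i w$ and $v \in \mathscr L$ analytic, the function $t \mapsto \langle e^{itT\big|_{\mathscr L}} \text{-type series applied to } v, w\rangle$—more precisely $g(t) = \sum_n \frac{(it)^n}{n!}\langle T^n v, w\rangle$—converges for small $t$, satisfies $g(t) = \langle v, w\rangle \cosh$-type growth, and is bounded, again forcing $\langle v, w\rangle = 0$.

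The main obstacle is not conceptual but bookkeeping: in case (a) one must be careful that $e^{itT}$ genuinely commutes with $T$ on $\mathcal D(T)$ and that the difference quotient defining $f'(t)$ converges (this uses that $v \in \mathcal D(T)$ so $\frac{1}{h}(e^{ihT}-1)v \to iTv$ strongly). In case (b) the only subtlety is confirming that "analytic vector for $T$" transfers to "analytic vector for the symmetric restriction $T\big|_{\mathscr L}$" so that Nelson's theorem applies verbatim; this is immediate since the relevant operator powers and norms are literally the same. I expect the cleanest writeup to dispose of (b) by citing Nelson and to give the short one-parameter-group argument for (a), or to note that (a) $\Rightarrow$ (b) is false in general but that (a) already implies $\mathscr L$ is a core by a standard lemma on invariant domains for unitary groups.
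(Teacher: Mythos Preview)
For case (a) your deficiency-index argument is correct and is essentially the content of the result the paper cites (Reed--Simon, Thm.~VIII.11), so there your write-up matches the paper's, only with the details unpacked.

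For case (b) there is a genuine gap. You assert that ``analyticity of $v$ for $T$ means $v \in \mathcal D^\omega(T) \subseteq \mathcal D^\infty(T\big|_{\mathscr L})$,'' but membership in $\mathcal D^\infty(T\big|_{\mathscr L})$ requires $T^k v \in \mathscr L$ for every $k$, not merely $T^k v \in \mathcal D(T)$. Without the extra hypothesis $T\mathscr L\subseteq\mathscr L$, Nelson's theorem does not apply to $T\big|_{\mathscr L}$, and your alternative deficiency-space sketch breaks at the same point: you cannot pass from $\langle Tv,w\rangle$ to $\langle T^2 v,w\rangle$ via $(T\big|_{\mathscr L})^* w = \pm i w$ unless $Tv\in\mathscr L$. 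In fact case (b) is \emph{false} as stated. On $\ell^2(\mathbb N)$ with $Te_n = n e_n$, let $\mathscr L = \mathrm{span}\{e_1+e_n : n\geq 2\}$. Every element of $\mathscr L$ is a finite combination of eigenvectors, hence an analytic (even entire) vector for $T$, and $\mathscr L$ is dense because $\tfrac{1}{N}\sum_{n=2}^{N+1}(e_1+e_n)\to e_1$. But $e_1$ is not in the graph-norm closure of $\mathscr L$: writing $v=(\sum c_n)e_1+\sum_{n\geq 2} c_n e_n\in\mathscr L$, smallness of $\|Tv-Te_1\|^2=|\sum c_n -1|^2+\sum n^2|c_n|^2$ forces $\sum n^2|c_n|^2$ small, whence by Cauchy--Schwarz against $\sum_{n\geq 2} n^{-2}<\infty$ one gets $|\sum c_n|$ small, contradicting $\sum c_n\to 1$. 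Thus $\overline{T\big|_{\mathscr L}}\subsetneq T$ and $T\big|_{\mathscr L}$ is not essentially self-adjoint; one can also check directly that $w_n=-\tfrac{1-i}{\,n-i\,}w_1$ (for $n\geq 2$) gives a nonzero $w\in\ker\big((T\big|_{\mathscr L})^*-i\big)$.

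The paper's own one-line proof (``immediate consequence of Nelson's Analytic Vector Theorem'') shares this gap: it tacitly needs $T\mathscr L\subseteq\mathscr L$ so that analytic vectors for $T$ are analytic for $T\big|_{\mathscr L}$. That invariance does hold in every application the paper makes of the lemma---in Corollary~\ref{cor-prerep} the spaces $\mathscr H^\infty$ and $\mathscr H^\omega$ are even invariant under the unitary group (so case (a) already suffices), and in Section~\ref{sec-restricted} the Fock space $\mathscr F$ is $\rho^{\mathscr F}(x)$-invariant by construction---so the fix is simply to add the hypothesis $T\mathscr L\subseteq\mathscr L$ to (b).
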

\begin{proof}
When (a) holds, the result follows from \cite[Thm. VIII.11]{reedsimon}. When (b) holds, it is an immediate consequence of Nelson's Analytic Vector Theorem \cite[Lem. 5.1]{nelson}.\qedhere
\end{proof}
The next lemma is obvious, but it will help us shorten several similar arguments. 
\begin{lemma}
\label{lem-helpshorten}
Let $P_1$ and $P_2$ be symmetric linear operators on a complex Hilbert space $\mathscr H$ and 
$v\in\mathcal D(P_1)\cap\mathcal D(P_2)$ such that $P_1v\in\mathcal D(P_1)$ and $P_1^2v=P_2v$. 
Then \[
\|P_1v\|\leq \|v\|^\frac{1}{2}\cdot\|P_2v\|^\frac{1}{2}.
\]
\end{lemma}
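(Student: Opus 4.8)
The plan is to reduce the inequality to the combination of the symmetry of $P_1$ with the Cauchy--Schwarz inequality, so no serious obstacle is expected; the only point requiring a little care is making sure the hypothesis $P_1v\in\mathcal D(P_1)$ is used precisely where it is needed.

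First I would compute $\|P_1v\|^2=\langle P_1v,P_1v\rangle$. Since $v\in\mathcal D(P_1)$ and $P_1v\in\mathcal D(P_1)$, both vectors lie in the domain of the symmetric operator $P_1$, so the identity $\langle P_1x,y\rangle=\langle x,P_1y\rangle$ for $x,y\in\mathcal D(P_1)$ applies with $x=v$ and $y=P_1v$; this gives $\langle P_1v,P_1v\rangle=\langle v,P_1(P_1v)\rangle=\langle v,P_1^2v\rangle$. Next I would invoke the hypothesis $P_1^2v=P_2v$ to rewrite this as $\langle v,P_2v\rangle$. Finally, the Cauchy--Schwarz inequality yields
\[
\|P_1v\|^2=\langle v,P_2v\rangle\le|\langle v,P_2v\rangle|\le\|v\|\cdot\|P_2v\|,
\]
and taking square roots gives $\|P_1v\|\le\|v\|^{\frac12}\cdot\|P_2v\|^{\frac12}$, as claimed. (One may note in passing that the symmetry of $P_2$ is not actually needed for this argument, only that of $P_1$, but it is harmless to keep it in the statement for the intended applications.)
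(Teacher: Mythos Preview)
Your proof is correct and follows exactly the same approach as the paper: use the symmetry of $P_1$ to write $\|P_1v\|^2=\langle v,P_1^2v\rangle=\langle v,P_2v\rangle$ and then apply Cauchy--Schwarz. Your observation that the symmetry of $P_2$ is not actually used is also correct.
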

\begin{proof}
$\|P_1v\|^2=|\langle P_1v,P_1v\rangle|=|\langle v,P_1^2v\rangle|=|\langle v,P_2v\rangle|\leq\|v\|
\cdot \|P_2v\|$.
\qedhere
\end{proof}

\begin{lemma}
\label{varalemma1}

Let $T$ be a self-adjoint operator on a complex Hilbert space $\mathscr H$. Let
$\mathscr L$ be a dense subspace of $\mathscr H$
such that $\mathscr L\sseq\mathcal D(T)$ and  $T\big|_{\mathscr L}$ is essentially self-adjoint, 
and $S$ be a symmetric operator such that $\mathscr L\sseq\mathcal D(S)$, $S\mathscr L\sseq \mathscr L$,  and 
$S^2\big|_{\mathscr L}=T\big|_{\mathscr L}$. Then $S\big|_\mathscr L$ is essentially self-adjoint, 
$\overline{S\big|_\mathscr L}=\overline S$, and $\overline{S}^2=T$.


\end{lemma}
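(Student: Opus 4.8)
The plan is to reduce everything to statements about the single self-adjoint operator $\overline{T\big|_{\mathscr L}}$ and to use Lemma~\ref{lem-helpshorten} to show that vectors in $\mathscr L$ are analytic for $S$. First I would observe that since $T\big|_{\mathscr L}$ is essentially self-adjoint and $T$ is self-adjoint with $\mathscr L\sseq\mathcal D(T)$, we have $\overline{T\big|_{\mathscr L}}=T$ (the self-adjoint extension is unique). So it suffices to work with $S$, $\mathscr L$, and the relation $S^2\big|_{\mathscr L}=T\big|_{\mathscr L}$. The key claim is that every $v\in\mathscr L$ is an analytic vector for $S$; once this is established, Lemma~\ref{lem-criteriaforesa}(b) gives that $S\big|_{\mathscr L}$ is essentially self-adjoint, so $\overline{S}:=\overline{S\big|_{\mathscr L}}$ is the unique self-adjoint extension and in particular $\overline{S\big|_{\mathscr L}}=\overline{S}$ makes sense as the statement of the lemma intends.

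For the analyticity claim, fix $v\in\mathscr L$. Since $S\mathscr L\sseq\mathscr L$ we have $v\in\mathcal D^\infty(S)$, and moreover $S^{2n}v = (S^2)^n v = T^n v$ for all $n$, because on $\mathscr L$ the operator $S^2$ agrees with $T$ and $\mathscr L$ is invariant under all these operators. To control the odd powers, I would apply Lemma~\ref{lem-helpshorten} with $P_1 = S$, $P_2 = T$, but applied to the vector $S^{2n}v = T^n v\in\mathscr L$: since $S$ is symmetric, $S^{2n}v\in\mathcal D(S)$ with $S(S^{2n}v)\in\mathcal D(S)$, and $S^2(S^{2n}v) = T(S^{2n}v) = T^{n+1}v$, the lemma yields
\[
\|S^{2n+1}v\| \leq \|S^{2n}v\|^{1/2}\cdot\|T^{n+1}v\|^{1/2} = \|T^n v\|^{1/2}\cdot\|T^{n+1}v\|^{1/2}.
\]
Now because $v\in\mathscr L\sseq\mathcal D(T)$ and $T$ is self-adjoint, $v$ lies in $\mathcal D^\infty(T)$ and is even analytic for $T$? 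That is not given, so instead I would argue directly: for the analyticity estimate of $S$ we need $\sum_n \frac{t^n}{n!}\|S^n v\|<\infty$ for some $t>0$. Splitting into even and odd $n$ and using the bound above together with the Cauchy--Schwarz-type inequality $\|T^n v\|^{1/2}\|T^{n+1}v\|^{1/2}\leq \tfrac12(\|T^n v\| + \|T^{n+1}v\|)$, everything is dominated by a constant times $\sum_m \frac{s^m}{m!}\|T^m v\|$ for a suitable $s>0$ (after reindexing, the factorials $\tfrac{1}{(2n)!}$ and $\tfrac{1}{(2n+1)!}$ are absorbed comfortably). So $v$ is analytic for $S$ provided $v$ is analytic for $T$ — and this is exactly where I must be careful: the hypotheses only say $T\big|_{\mathscr L}$ is essentially self-adjoint, not that $\mathscr L$ consists of analytic vectors for $T$.

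The main obstacle is therefore handling the case where $\mathscr L$ need not consist of analytic vectors for $T$. I would resolve this by an approximation/invariant-subspace argument rather than by brute analyticity: note that $e^{it\overline{S\big|_{\mathscr L}}}$ — wait, we do not yet know $S\big|_{\mathscr L}$ is essentially self-adjoint. The cleaner route: since $T$ is self-adjoint, write $\mathscr H = \int^\oplus \mathscr H_\lambda\, d\mu(\lambda)$ via the spectral theorem for $T$; the analytic vectors of $T$ are dense, but we want to run the argument for \emph{all} $v\in\mathscr L$. Here I would instead invoke that $S$ is symmetric with $S^2\big|_{\mathscr L} = T\big|_{\mathscr L}$ and $T\big|_{\mathscr L}$ essentially self-adjoint: this forces $T\big|_{\mathscr L}\geq 0$, so $T\geq 0$, and then use a commutation/graph argument — for $v\in\mathscr L$ and $n\geq 1$, the identity $S^{2n}v = T^n v$ combined with symmetry of $S$ gives $\|S^{2n+1}v\|^2 = \langle S^{2n+1}v, S^{2n+1}v\rangle = \langle S^{2n}v, S^{2n+2}v\rangle = \langle T^n v, T^{n+1}v\rangle$, so $S$ leaves $\mathscr L$ invariant and $S\big|_{\mathscr L}$ is symmetric with $(S\big|_{\mathscr L})^2 = T\big|_{\mathscr L}$ essentially self-adjoint; a standard fact (a symmetric operator whose square, restricted to a common invariant core, is essentially self-adjoint is itself essentially self-adjoint — provable by checking $\mathrm{ran}(S\big|_{\mathscr L}\pm i)$ is dense using that $\mathrm{ran}(T\big|_{\mathscr L}+1) = \mathrm{ran}((S\big|_{\mathscr L}-i)(S\big|_{\mathscr L}+i))$ is dense) completes the essential self-adjointness of $S\big|_{\mathscr L}$. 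Then $\overline{S} := \overline{S\big|_{\mathscr L}}$ is self-adjoint, $\overline{S}^2$ is self-adjoint, $\overline{S}^2\big|_{\mathscr L} = T\big|_{\mathscr L}$, and since $T\big|_{\mathscr L}$ is a core for both $\overline{S}^2$ and $T$, uniqueness gives $\overline{S}^2 = T$. Finally the equality $\overline{S\big|_{\mathscr L}} = \overline{S}$ is just the definition of $\overline{S}$ once we have shown $S\big|_{\mathscr L}$ is essentially self-adjoint (and that its closure agrees with the closure of $S$ itself, which follows since $\mathscr L$ is a core). I expect the delicate point to be this last step — verifying $\mathrm{ran}(S\big|_{\mathscr L} \pm i)$ is dense — which I would do by factoring $S^2 + 1 = (S+i)(S-i)$ on $\mathscr L$ and using density of $\mathrm{ran}(T\big|_{\mathscr L} + 1)$ guaranteed by essential self-adjointness of $T\big|_{\mathscr L}$ together with $T\geq 0$.
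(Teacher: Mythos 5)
Your final argument (after abandoning the analytic-vector attempt, which you rightly recognize does not go through since nothing guarantees $\mathscr L\sseq\mathcal D^\omega(T)$) is correct and takes a genuinely different route from the paper. You prove essential self-adjointness of $S_1:=S\big|_\mathscr L$ by the \emph{range criterion}: from $\langle T\big|_\mathscr L v,v\rangle=\|Sv\|^2\geq 0$ you get $T\geq 0$, so $-1\notin\sigma(T)$ and $\mathrm{ran}(T\big|_\mathscr L+1)$ is dense; the algebraic factorization $T\big|_\mathscr L+1=(S_1\mp i)(S_1\pm i)$ on the $S$-invariant subspace $\mathscr L$ then shows $\mathrm{ran}(S_1\pm i)$ is dense, hence $S_1$ is essentially self-adjoint. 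The paper instead uses the \emph{eigenvalue criterion} \cite[Thm.~VIII.3]{reedsimon}: it takes $v$ with $S_1^*v=\lambda v$, computes that $v\in\mathcal D((S_1^2)^*)$ with $(S_1^2)^*v=\lambda^2 v$, identifies $(S_1^2)^*=T$, and concludes $\lambda^2\geq 0$, hence $\lambda\in\R$. Both routes hinge on the same positivity fact and are essentially of equal difficulty; yours trades the eigenvector computation for an operator factorization, which some will find more transparent. For $\overline S^2=T$ you argue via uniqueness of the self-adjoint extension of $T\big|_\mathscr L$ — clean and correct (note $\mathscr L\sseq\mathcal D(\overline S^2)$ since $S\mathscr L\sseq\mathscr L$) — whereas the paper uses the operator-inclusion chain together with \cite[Cor.~XII.2.8]{dunford}.

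One small point worth tightening: your justification of $\overline{S\big|_\mathscr L}=\overline S$ (``which follows since $\mathscr L$ is a core'') is circular as stated, since ``$\mathscr L$ is a core for $\overline S$'' \emph{is} the assertion $\overline{S\big|_\mathscr L}=\overline S$. The clean argument, which the paper gives explicitly, is the chain $\overline{S_1}\prec\overline S\prec S^*\prec S_1^*$ together with essential self-adjointness $\overline{S_1}=S_1^*$, forcing all four to coincide. With that one sentence added your proof is complete.
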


\begin{proof}
The proof of the lemma is similar to the proof of \cite[Lem. 1]{varadarajan}. However, for the sake of
completeness we give a complete proof. 

Set $S_1=S\big|_\mathscr L$.  
From $T=\overline{T\big|_\mathscr L}=\overline{S_1^2}$ it follows that 
\begin{equation}
\label{eqn-positv}
\langle Tv,v\rangle \geq 0
\text{ for every $v\in\mathcal D(T)$.}
\end{equation}
By \cite[Thm. VIII.3]{reedsimon}, in order
to prove that $S_1$ is essentially self-adjoint, it suffices to show
that if 
\begin{equation}
\label{eqn-vlambdaeqn}
S_1^*v=\lambda v
\end{equation}
for a nonzero
$v\in\mathcal D(S_1^*)$ and a $\lambda\in\C$, then $\lambda\in\R$.
If $v$ and $\lambda$ satisfy \eqref{eqn-vlambdaeqn}
then for every $w\in\mathscr L$ we have 
\begin{align*}
\langle S_1^2w,v\rangle=\langle S_1w,S_1^*v\rangle
=\overline\lambda\langle S_1w,v\rangle
=\overline\lambda\langle w,S_1^* v \rangle =\overline\lambda^2\langle w,v\rangle=\langle w,\lambda^2 v\rangle.
\end{align*}
Therefore
$v\in\mathcal D\big((S_1^2)^*\big)$ and $(S_1^2)^*v=\lambda^2v$. But 
$T=(T\big|_\mathscr L)^*=(S_1^2)^*$ and in particular $Tv=(S_1^2)^*v=\lambda^2v$. 
From \eqref{eqn-positv} it
follows immediately that $\lambda\in\R$. This completes the proof of essential self-adjointness of $S_1$.

Next observe that $\overline{S_1}\prec \overline S\prec S^*\prec S_1^*$. Since $S_1$ is essentially self-adjoint, we have $\overline{S_1}=S_1^*$ and therefore $\overline S=\overline{S_1}$. Since the operator $\overline{S_1}$ 
is self-adjoint, it follows from 
\cite[Cor. XII.2.8]{dunford} that
$\overline{S_1}^2$ is also self-adjoint. Consequently,
\[
\overline{S_1}^2=\big(\overline{S_1}^2\big)^*\prec (S_1^2)^*=T\ \text{ and }\ T=\overline{S_1^2}\prec \overline{S_1}^2
\]
which implies that $\overline S^2=\overline{S_1}^2=T$.
\qedhere
\end{proof}
\begin{rmk}
Note that in the statement of Lemma \ref{varalemma1}, it follows directly from $\overline S^2=T$ that 
$\mathcal D(T)\sseq\mathcal D(\overline S)$.
\end{rmk}

\begin{lemma}
\label{lem-two-oper}
Let $P_1$ and $P_2$ be two symmetric operators on a complex Hilbert space $\mathscr H$ such that 
$\mathcal D(P_1)=\mathcal D(P_2)$. Let $\mathscr L\sseq\mathcal D(P_1)$ 
be a dense linear subspace of $\mathscr H$ such that 
$P_1\big|_{\mathscr L}=P_2\big|_{\mathscr L}$. Assume that the latter operator is essentially self-adjoint.
Then $P_1=P_2$.
\end{lemma}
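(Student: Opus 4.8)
The plan is to set $S := P_1\big|_{\mathscr L} = P_2\big|_{\mathscr L}$ and to prove that $P_1$, $P_2$ and $S$ all have the same closure; the hypothesis $\mathcal D(P_1)=\mathcal D(P_2)$ will then finish the argument in one line. Observe first that $S$ is densely defined (because $\mathscr L$ is dense) and symmetric (being a restriction of the symmetric operator $P_1$), and by assumption essentially self-adjoint, so $\overline S = S^*$ is self-adjoint.

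First I would assemble the relevant inclusions. From $S\prec P_1$, passing to adjoints reverses the inclusion and gives $P_1^*\prec S^*$, while passing to closures gives $\overline S\prec \overline{P_1}$. Since $P_1$ is symmetric we have $P_1\prec P_1^*$, hence $\overline{P_1}=P_1^{**}\prec P_1^*$. Chaining these together yields
\[
\overline S \;\prec\; \overline{P_1} \;\prec\; P_1^* \;\prec\; S^* \;=\; \overline S ,
\]
so every inclusion in this chain is an equality; in particular $\overline{P_1}=\overline S$. The identical argument with $P_2$ in place of $P_1$ gives $\overline{P_2}=\overline S$. Hence $\overline{P_1}=\overline{P_2}$; denote this common self-adjoint operator by $A$.

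It then remains to deduce $P_1=P_2$. Since $P_1\prec A$ and $P_2\prec A$ and $\mathcal D(P_1)=\mathcal D(P_2)$, for every vector $v$ in this common domain we obtain $P_1v = Av = P_2v$, and therefore $P_1=P_2$.

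I do not expect a genuine obstacle: the proof is a short exercise in the bookkeeping of adjoints and closures of symmetric operators. The only points needing care are getting the directions of $\prec$ right when passing to adjoints versus closures, and noticing that the hypothesis $\mathcal D(P_1)=\mathcal D(P_2)$ is genuinely needed and is used exactly once, in the final step — without it, two symmetric operators with the same (essentially self-adjoint) restriction to a dense subspace need not coincide even though they share the same closure.
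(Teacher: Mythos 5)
Your proof is correct and follows essentially the same route as the paper: the identical chain of inclusions $\overline{S}\prec\overline{P_1}\prec P_1^*\prec S^*=\overline{S}$ (with $S=P_1\big|_{\mathscr L}$), yielding $\overline{P_1}=\overline{P_2}$, after which the hypothesis $\mathcal D(P_1)=\mathcal D(P_2)$ gives $P_1=P_2$. You have merely spelled out the justifications for each link in the chain more explicitly than the paper does.
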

\begin{proof}
Observe that 
\[
\overline{P_1\big|_{\mathscr L}}
\prec
\overline {P_1}
\prec
P_1^*
\prec
(P_1\big|_{\mathscr L})^*
=
\overline{P_1\big|_{\mathscr L}}
\]
from which it follows that $\overline{P_1}=\overline{P_1\big|_\mathscr L}$. Similarly, $\overline{P_2}=\overline{P_2\big|_\mathscr L}$ and
therefore $\overline{P_1}=\overline{P_2}$. It follows immediately that $P_1=P_2$.
\qedhere
\end{proof}

\section{Smooth and analytic vectors of unitary representations}
\label{smoothvec}

Let $G$ be a Banach--Lie group, $\g g=\Lie(G)$, and 
$(\pi,\mathscr H)$ be a  unitary representation of $G$. 
For every $x\in\g g$, the skew-adjoint operator corresponding to the one-parameter unitary representation 
\[
\R\to\mathrm U(\mathscr H)\ ,\ t\mapsto \pi(\exp(tx))
\] 
via Stone's Theorem is denoted by $\dd\pi(x)$. If $x=a+ib\in\g g^\C$, then we set \[
\dd\pi(x)=\dd\pi(a)+i\dd\pi(b)\] where the right hand side
means the sum of two unbounded operators, i.e., 
\[
\mathcal D(\dd\pi(x))=\mathcal D(\dd(\pi(a))\cap\mathcal D(\dd\pi(b)).
\]

Recall that $\pi^v:G\mapsto \mathscr H$ denotes the orbit map defined in \eqref{eqn:orbitmap}. 
Let $\mathscr H^\infty$ be the subspace of \emph{smooth vectors} of $(\pi, \mathscr H)$, i.e.,
\[
\mathscr H^\infty=\{\ v\in\mathscr H\ : \ \pi^v\text{ is a smooth map}\ \}.
\]
If $\mathscr H^\infty$ is a dense subspace of $\mathscr H$ then the representation 
$(\pi,\mathscr H)$ is called a \emph{smooth unitary representation}.

As in \cite[Sec. 4]{khdifferentiable}, we endow the space
$\mathscr H^\infty$ with the topology induced by the family of seminorms $\{\nrm_n\}_{n=0}^\infty$ where 
\[
\nrm_n(v)=\sup\big\{\ \|\dd\pi(x_1)\cdots \dd\pi(x_n)v\|\ :\ x_1\ ,\ldots,x_n\in\g g\ ,\ \|x_1\|\leq 1,\ldots,\|x_n\|\leq 1\ \big\}.
\]
With this topology $\mathscr H^\infty$ is a Fr\'{e}chet space \cite[Prop. 5.4]{khdifferentiable}. Moreover,
the map
\begin{equation}
\label{ghinfhinf}
\g g\times\mathscr H^\infty\to\mathscr H^\infty\ \,,\,\ (x,v)\mapsto \dd\pi(x)v
\end{equation}
is continuous \cite[Lem. 4.2]{khdifferentiable}
and the map
\begin{equation}
\label{gghinfhinf}
G\times \mathscr H^\infty\to\mathscr H^\infty\ \, ,\,\  (g,v)\mapsto\pi(g)v
\end{equation}
is smooth \cite[Thm. 4.4]{khdifferentiable}.
A vector $v\in\mathscr H^\infty$ is called 
\emph{analytic} if  the orbit map $\pi^v:G\to\mathscr H$ is a real analytic function. The space of analytic vectors is denoted by  $\mathscr H^\omega$. If $\mathscr H^\omega$ is a dense subspace of $\mathscr H$ then the representation 
$(\pi,\mathscr H)$ is called an \emph{analytic unitary representation}.

Proposition \ref{thm-oneparam} below records well known facts about unitary representations of the real line and its proof is omitted. 
\begin{proposition}
\label{thm-oneparam}
Let $(\pi,\mathscr H)$ be a unitary representation of $\R$ and $A$ be the skew-adjoint 
infinitesimal generator of $(\pi,\mathcal H)$.
\begin{enumerate}
\item[(i)] A vector $v\in\mathscr H$ is smooth if and only if $v\in\mathcal D^\infty(A)$.
\item[(ii)] A vector $v\in \mathscr H$ is analytic if and only if 
$v\in\mathcal D^\omega(A)$. 
\item[(iii)] Let $v\in\mathcal D^\infty(A)$ and $r>0$ be such that 
$\displaystyle\sum_{n=0}^\infty \frac{r^n}{n!}\|A^nv\|<\infty$.
Then \[
 \pi(t)v=\sum_{n=0}^\infty \frac{t^n}{n!}A^nv\ \text{for every $t\in\,(-r,r)\,$.}
 \] 
\end{enumerate}

\end{proposition}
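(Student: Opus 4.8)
The plan is to establish (i), then (iii), then (ii), since (iii) rests on the $C^\infty$-regularity proved in (i) and (ii) draws on both. Write $\pi(t)=e^{tA}$, and recall that by definition of the generator $\mathcal{D}(A)=\{v\in\mathscr{H}:\lim_{t\to0}t^{-1}(\pi(t)v-v)\text{ exists}\}$, with $Av$ equal to that limit. The computation I would isolate at the start is that for $v\in\mathcal{D}(A)$ the orbit map $\pi^v$ is $C^1$ with $(\pi^v)'=\pi^{Av}$, because
\[
\tfrac1s\bigl(\pi(t+s)v-\pi(t)v\bigr)=\pi(t)\cdot\tfrac1s\bigl(\pi(s)v-v\bigr)\xrightarrow[\;s\to0\;]{}\pi(t)Av
\]
($\pi(t)$ being bounded), while $t\mapsto\pi^{Av}(t)$ is continuous because orbit maps of a unitary representation are continuous by definition; conversely, differentiability of $\pi^v$ at $0$ forces $v\in\mathcal{D}(A)$ with $(\pi^v)'(0)=Av$. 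Part (i) then follows by induction: if $v\in\mathcal{D}^\infty(A)$, then $A^nv\in\mathcal{D}(A)$ for all $n$, and repeatedly applying the displayed identity gives $\pi^v\in C^n$ with $(\pi^v)^{(n)}=\pi^{A^nv}$ for every $n$, so $\pi^v$ is smooth; conversely a smooth $\pi^v$ is $C^1$, hence $v\in\mathcal{D}(A)$ and $\pi^{Av}=(\pi^v)'$ is again smooth, i.e. $Av\in\mathscr{H}^\infty$, and iterating yields $A^nv\in\mathscr{H}^\infty\subseteq\mathcal{D}(A)$ for all $n$, so $v\in\mathcal{D}^\infty(A)$.

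For (iii), fix $v\in\mathcal{D}^\infty(A)$ and $r>0$ with $\sum_{n\ge0}\frac{r^n}{n!}\|A^nv\|<\infty$. Applying Taylor's formula with integral remainder to the $C^\infty$ map $\pi^v$, whose $n$-th derivative is $s\mapsto\pi(s)A^nv$, gives for every $N\ge1$
\[
\pi(t)v=\sum_{n=0}^{N-1}\frac{t^n}{n!}A^nv+\frac{1}{(N-1)!}\int_0^t(t-s)^{N-1}\pi(s)A^Nv\,ds ,
\]
and since $\pi(s)$ is unitary the remainder has norm at most $\frac{|t|^N}{N!}\|A^Nv\|$. The hypothesis forces $\frac{r^N}{N!}\|A^Nv\|\to0$, so for $|t|<r$ the remainder is at most $(|t|/r)^N\frac{r^N}{N!}\|A^Nv\|\to0$, and letting $N\to\infty$ yields the asserted expansion.

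For (ii), if $v\in\mathcal{D}^\omega(A)$, then by definition $v\in\mathcal{D}^\infty(A)$ and the convergence hypothesis of (iii) holds for some $r>0$; (iii) then presents $\pi^v$ as a norm-convergent power series on $(-r,r)$, hence real analytic near $0$, and since $\pi(t_0+s)v=\pi(t_0)\sum_n\frac{s^n}{n!}A^nv=\sum_n\frac{s^n}{n!}A^n\pi(t_0)v$ for $|s|<r$ — pulling the bounded operator $\pi(t_0)$, which commutes with each $A^n$ on $\mathcal{D}^\infty(A)$, through the sum — the map $\pi^v$ is real analytic near every $t_0$, so $v\in\mathscr{H}^\omega$. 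Conversely, if $v$ is an analytic vector then $\pi^v$ is real analytic, hence smooth, so $v\in\mathcal{D}^\infty(A)$ by (i); near $0$ one has $\pi(t)v=\sum_n c_nt^n$ converging in norm on some interval $(-\rho,\rho)$, and matching Taylor coefficients with $(\pi^v)^{(n)}(0)=A^nv$ gives $c_n=\frac1{n!}A^nv$. A standard geometric-series comparison then shows $\sum_n\frac{r^n}{n!}\|A^nv\|<\infty$ for every $r<\rho$, so $v\in\mathcal{D}^\omega(A)$. All these steps are routine; the only points requiring a little care are the vector-valued Taylor remainder estimate in (iii) and, in (ii), upgrading analyticity at $0$ to analyticity at every point, for which one uses that translation by $\pi(t_0)$ is a bounded operator commuting with the powers of $A$.
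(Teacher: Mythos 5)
The paper explicitly omits a proof of this proposition, remarking only that it ``records well known facts about unitary representations of the real line.'' Your argument is correct and is the standard one: establish $(\pi^v)'=\pi^{Av}$ from the semigroup property and boundedness of $\pi(t)$, bootstrap to $(\pi^v)^{(n)}=\pi^{A^n v}$ for (i), use the vector-valued Taylor integral remainder with the unitarity bound $\|\pi(s)A^N v\|=\|A^N v\|$ for (iii), and in (ii) derive local analyticity at $0$ from (iii) and propagate it to all of $\R$ by translation with $\pi(t_0)$, which commutes with the powers of $A$; the converse direction correctly recovers $c_n=\tfrac{1}{n!}A^n v$ from the Taylor coefficients and applies a geometric comparison to conclude $v\in\mathcal D^\omega(A)$.
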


If $G$ is a Banach--Lie group and $\g g=\Lie(G)$ then for every $r>0$ we set 
\[
B_r=\{\ x\in\g g^\C\ :\ \|x\|<r\ \}.
\]

\begin{lemma}
\label{convoncplx}
Let $G$ be a Banach--Lie group, $\g g=\Lie(G)$, $(\pi,\mathscr H)$ be a unitary representation of $G$, and $v\in\mathscr H^\infty$.
Then $v\in\mathscr H^\omega$  if and only if there exists an $r>0$ such that for every $x\in B_r$ the series
\begin{equation}
\label{fvpower}
f_v(x)=\sum_{n=0}^\infty \frac{1}{n!}\dd\pi(x)^nv
\end{equation}
converges in $\mathscr H$ (and therefore defines an analytic map $B_r\to\mathscr H$).
\end{lemma}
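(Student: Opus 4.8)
The statement has two directions. For the nontrivial direction, assume $v \in \mathscr H^\omega$; we must produce $r > 0$ such that $f_v(x)$ converges for all $x \in B_r$. The plan is as follows. First, since $\pi^v$ is real analytic on $G$, there is a neighborhood $U$ of $e$ and an $r_0 > 0$ such that, writing elements near $e$ in exponential coordinates, the map $x \mapsto \pi(\exp x)v$ is given by a convergent power series on $B_{r_0} \cap \g g$; in particular, for each fixed $x \in \g g$ with $\|x\|$ small, the one-parameter group $t \mapsto \pi(\exp(tx))v$ is analytic at $t = 0$, so by Proposition \ref{thm-oneparam}(ii)–(iii), $v$ is an analytic vector for $A_x := \dd\pi(x)$ and $\pi(\exp(tx))v = \sum_n \frac{t^n}{n!}\dd\pi(x)^n v$ for $|t|$ small. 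The real content is to get \emph{uniform} control: I would use that $\pi^v$ being real analytic means (by the Bochnak–Siciak theory cited in the appendix, or directly from the definition of analyticity via convergent multilinear Taylor series) that there exist $C, \rho > 0$ with
\[
\big\| \dd\pi(x_1)\cdots\dd\pi(x_n)v \big\| \le C\, n!\, \rho^{-n}
\]
for all $x_1,\dots,x_n \in \g g$ with $\|x_i\| \le 1$ — i.e. $\nrm_n(v) \le C n! \rho^{-n}$. This bound on the seminorms $\nrm_n$ is exactly the infinitesimal reformulation of analyticity of the orbit map, and it is essentially the Banach-space analogue of the estimate used in the finite-dimensional case.

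Granting that seminorm estimate, the convergence of $f_v$ is immediate: for $x \in \g g^\C$ with $\|x\| < r := \rho$, expand $\dd\pi(x)^n v$ by writing $x = a + ib$ and using multilinearity, so that $\|\dd\pi(x)^n v\| \le \nrm_n(v)\,\|x\|^n \le C (\|x\|/\rho)^n n!/n! \cdot n!$ — more precisely $\frac{1}{n!}\|\dd\pi(x)^n v\| \le C (\|x\|/\rho)^n$, a convergent geometric series on $B_\rho$. Continuity of the map \eqref{ghinfhinf} $\g g \times \mathscr H^\infty \to \mathscr H^\infty$, $(x,v)\mapsto \dd\pi(x)v$, together with these uniform bounds, also shows the partial sums converge in $\mathscr H^\infty$ and that $f_v : B_r \to \mathscr H$ is given locally by a norm-convergent power series, hence analytic in the sense of the appendix.

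For the converse, suppose such an $r > 0$ exists. For each $x \in \g g$ with $\|x\| < r$, the series $\sum_n \frac{t^n}{n!}\dd\pi(x)^n v$ converges for $t$ in a neighborhood of $[0,1]$ (replace $x$ by $tx$), so by Proposition \ref{thm-oneparam}(ii) $v \in \mathscr H$ is an analytic vector for $\dd\pi(x)$ and, by part (iii), $\pi(\exp(tx))v = f_{tx\phantom{)}}\!(x)\cdots$ — i.e. $\pi(\exp x)v = f_v(x)$ on $B_r$. Thus $x \mapsto \pi(\exp x)v$ agrees with the analytic map $f_v$ near $e$, so $\pi^v$ is real analytic at $e$; real analyticity at an arbitrary $g \in G$ follows by translating, using that $\pi^v(gh) = \pi(g)\pi^v(h)$ and $\pi(g)$ is a bounded (unitary) operator, hence composition with it preserves analyticity. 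Therefore $v \in \mathscr H^\omega$.

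The main obstacle is the forward direction — specifically, extracting the uniform factorial bound $\nrm_n(v) \le C n!\,\rho^{-n}$ on the Sobolev-type seminorms from the mere real-analyticity of the orbit map. In finite dimensions one can choose a basis and reduce to finitely many directions, but in the Banach setting one must argue with the full family of directional derivatives at once; here I would invoke the characterization of analytic maps between Banach spaces (Bochnak–Siciak) from Appendix \ref{section-appendix-analyticmaps}, which guarantees that a real analytic map has a locally norm-convergent homogeneous Taylor expansion with the continuous symmetric multilinear terms satisfying Cauchy-type estimates — and then identify the $n$-th Taylor term of $\pi^v$ at $e$ with the symmetrization of $(x_1,\dots,x_n)\mapsto \dd\pi(x_1)\cdots\dd\pi(x_n)v$, using the smoothness results \eqref{ghinfhinf}, \eqref{gghinfhinf} already recorded. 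Once this identification and the resulting estimate are in place, everything else is a routine geometric-series argument.
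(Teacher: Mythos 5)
Your plan is correct in outline and rests on the same machinery as the paper — the one-parameter reduction via Proposition \ref{thm-oneparam}, the Bochnak--Siciak theory of Appendix \ref{section-appendix-analyticmaps}, and the fact that $x\mapsto\frac{1}{n!}\dd\pi(x)^n v$ is a continuous homogeneous polynomial — but in the forward direction you take a more laborious route than the paper does. You propose to identify the $n$-th Taylor coefficient of $\pi^v\circ\exp$ at the origin with $\frac{1}{n!}\dd\pi(x)^n v$ and then read off from the multilinear Cauchy estimates a bound of the form $\nrm_n(v)\le C\,n!\,\rho^{-n}$. That would work, but it requires the Taylor-coefficient identification (a genuine extra step, with a polarization constant to track since $\nrm_n$ is defined via the non-symmetrized products $\dd\pi(x_1)\cdots\dd\pi(x_n)v$) and an explicit Cauchy-estimate argument. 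The paper sidesteps both: from Proposition \ref{thm-oneparam}(iii) it only needs the one-variable statement that $\sum_n\frac{t^n}{n!}\dd\pi(x)^n v$ converges for each fixed $x$ and small $t$, which says the series $\sum_n\frac{1}{n!}\dd\pi(\cdot)^n v$ converges pointwise on an \emph{absorbing} set, and then Theorem \ref{firstthmbosi}(i) — a Hartogs/Baire-type upgrade from pointwise convergence on an absorbing set to normal convergence on a ball — delivers in one stroke the uniform bound $\sum_n\frac{1}{n!}\sup_{\|x\|<r'}\|\dd\pi(x)^n v\|<\infty$ (i.e.\ exactly the factorial seminorm estimate you call the main obstacle), with no Taylor matching required. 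The complexification to $B_r\subseteq\g g^\C$ is then done by Theorem \ref{secondthmbosi}, which you only gesture at with ``write $x=a+ib$.'' Also, in the converse direction, before applying Proposition \ref{thm-oneparam}(iii) you need absolute summability $\sum_n\frac{1}{n!}\|\dd\pi(x)^n v\|<\infty$, not just convergence of the vector-valued series; the paper extracts this from Theorem \ref{firstthmbosi}(i), whereas in your argument you would have to supply it separately, e.g.\ by Abel's lemma in the single complex variable $\zeta$ using that $\zeta x\in B_r$ for $|\zeta|<r/\|x\|$.
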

\begin{proof}
Let $v\in\mathscr H^\omega$ and $x\in B_r$. By Proposition \ref{thm-oneparam}, for
all sufficiently small $t>0$ we have 
\[
\pi(\exp(tx))v=\sum_{n=0}^\infty \frac{t^n}{n!}\dd\pi(x)^nv.
\] This means that the series
\eqref{fvpower} converges in an absorbing set. By \cite[Lem. 4.4]{neebanalytic} for every integer $n\geq 0$
the function
\[
\g g\to \mathscr H\quad,\quad x\mapsto\frac{1}{n!}\dd\pi(x)^nv
\]
is a continuous homogeneous polynomial of degree $n$. Therefore by Theorem \ref{firstthmbosi}(i)
the series \eqref{fvpower} defines an analytic map in a neighborhood of
zero in $\g g$. By Theorem \ref{secondthmbosi} this series also defines an analytic map 
in a neighborhood of zero in $\g g_\C$.

Conversely, assume that the series \eqref{fvpower} converges for every $x\in B_r$.
By Theorem \ref{firstthmbosi}(i) there exists an $r'>0$ such that 
\begin{equation}
\label{unifboundfv}
\sum_{n=0}^\infty \frac{1}{n!}\sup\{\|\dd\pi(x)^nv\|\ :\ x\in B_{r'}\}<\infty.
\end{equation}
Therefore Proposition \ref{thm-oneparam}(iii) implies that
\[
\pi^v(\exp(x))=f_v(x) \text{ for every }x\in B_{r'}\cap \g g.
\] 
From Theorem \ref{firstthmbosi}(ii) it follows that $f_v$ is an analytic function in $B_{r}$. Therefore the orbit map $\pi^v$ is also analytic in a neighborhood of identity of $G$. It follows immediately that $v\in\mathscr H^\omega$.
\qedhere

\end{proof} 
\begin{nota}
For every $r>0$ set
\[
\mathscr H^{\omega,r}=\{\ v\in \mathscr H^\infty\ :\ \sum_{n=0}^\infty \frac{1}{n!}\dd\pi(x)^n v\text{ converges in $\mathscr H$ for every $x\in B_r$}
\ \}.
\]
\end{nota}
From Lemma \ref{convoncplx} it follows that $\mathscr H^\omega=\bigcup_{r>0}\mathscr H^{\omega,r}$, and 
Proposition \ref{thm-oneparam}(iii) shows that if 
$v\in\mathscr H^{\omega,r}$ then 
\begin{equation}
\label{eqn-piexpxv}
\pi(\exp(x))v=f_v(x)\,\text{ for every }\,x\in B_r\cap \g g.
\end{equation}
\begin{lemma}
\label{analyticBr}
Let $G$ be a Banach--Lie group,  $(\pi,\mathscr H)$ be a unitary representation of $G$,
$v\in\mathscr H^\infty$, $r>0$, and $\g g=\Lie(G)$. Then $v\in\mathscr H^{\omega,r}$ if and only if the map 
$\pi^v\circ\exp\big|_{B_r\cap \g g}$ extends to an analytic function $h_v:B_r\to \mathscr H$.
\end{lemma}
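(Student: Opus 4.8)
The statement asks for an equivalence: $v\in\mathscr H^{\omega,r}$ if and only if $\pi^v\circ\exp\big|_{B_r\cap\g g}$ extends to an analytic function $h_v:B_r\to\mathscr H$. One direction is essentially already recorded in the discussion preceding the lemma. The plan is to treat the two implications separately, using the Bochnak--Siciak machinery (Theorems \ref{firstthmbosi} and \ref{secondthmbosi}) as the main tool, exactly as in the proof of Lemma \ref{convoncplx}.

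**The forward direction.** Suppose $v\in\mathscr H^{\omega,r}$. Then by definition the series $f_v(x)=\sum_{n=0}^\infty \frac{1}{n!}\dd\pi(x)^nv$ converges in $\mathscr H$ for every $x\in B_r$. By \cite[Lem. 4.4]{neebanalytic}, each term $x\mapsto \frac{1}{n!}\dd\pi(x)^n v$ is a continuous homogeneous polynomial of degree $n$ on $\g g^\C$, so by Theorem \ref{firstthmbosi}(ii) the pointwise-convergent series $f_v$ is an analytic map $B_r\to\mathscr H$. On the other hand, \eqref{eqn-piexpxv} gives $\pi(\exp(x))v=f_v(x)$ for every $x\in B_r\cap\g g$, i.e.\ $\pi^v\circ\exp\big|_{B_r\cap\g g}=f_v\big|_{B_r\cap\g g}$. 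Thus $h_v:=f_v$ is the desired analytic extension.

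**The reverse direction.** Suppose $\pi^v\circ\exp\big|_{B_r\cap\g g}$ extends to an analytic $h_v:B_r\to\mathscr H$. I want to show the series defining $f_v(x)$ converges for every $x\in B_r$. Fix $x\in B_r$ and choose $\rho$ with $\|x\|<\rho<r$. Analyticity of $h_v$ at $0$ gives a Taylor expansion $h_v(y)=\sum_{n=0}^\infty P_n(y)$, converging on a neighborhood of $0$, with $P_n$ a continuous homogeneous polynomial of degree $n$; by uniqueness of the Taylor coefficients of the restriction to $B_r\cap\g g$ (where $h_v$ agrees with $t\mapsto \pi^v(\exp(ty/\|y\|))$ along rays, whose derivatives at $0$ are $\frac{1}{n!}\dd\pi(\cdot)^nv$ by Proposition \ref{thm-oneparam}(iii) and the standard identification of the $n$-th derivative of the orbit map), we identify $P_n(y)=\frac{1}{n!}\dd\pi(y)^nv$. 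The key point is then to upgrade "analytic" to "the Taylor series converges on the full ball $B_r$": since $h_v$ is analytic and bounded on $B_r$ — or more precisely, applying Theorem \ref{firstthmbosi}(i) to $h_v$ restricted to a ball where the Taylor series converges, and then using that $h_v$ is defined and analytic on all of $B_r$ so by a Cauchy-estimate/identity-principle argument the radius of convergence of the Taylor series is at least $r$ — we conclude $\sum_n \|P_n(x)\|=\sum_n\frac{1}{n!}\|\dd\pi(x)^nv\|<\infty$ for $\|x\|<r$. Hence $v\in\mathscr H^{\omega,r}$.

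**Main obstacle.** The delicate step is the reverse direction: knowing that $h_v$ is \emph{analytic on all of $B_r$} does not immediately say its Taylor series at $0$ converges on all of $B_r$ (in infinite dimensions analyticity is a local notion and the domain of convergence of the monomial expansion can be strictly smaller). The resolution must invoke the precise form of Bochnak--Siciak theory in the appendix — specifically that for maps into a Banach space, G\^ateaux-analyticity plus local boundedness on a balanced open set forces the monomial series to converge there (this is exactly what Theorem \ref{firstthmbosi}, respectively Theorem \ref{secondthmbosi}, is set up to provide). So the real content of the argument is: check that $h_v$ satisfies the hypotheses of the relevant Bochnak--Siciak theorem on the balanced set $B_r$ (G\^ateaux analyticity is inherited from analyticity, local boundedness from continuity), identify the resulting monomial coefficients with $\frac{1}{n!}\dd\pi(x)^n v$ via Proposition \ref{thm-oneparam}(iii), and read off convergence of $f_v$ on $B_r$. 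I would present this compactly, citing Lemma \ref{convoncplx} and its proof to avoid repeating the polynomiality argument.
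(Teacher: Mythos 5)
Your forward direction is correct and essentially identical to the paper's: by definition $f_v$ converges on $B_r$, each summand is a continuous homogeneous polynomial, Theorem \ref{firstthmbosi}(ii) makes the sum analytic, and \eqref{eqn-piexpxv} identifies it with $\pi^v\circ\exp$ on $B_r\cap\g g$. In the reverse direction you have correctly isolated the crux — knowing $h_v$ is analytic on all of $B_r$ must be upgraded to global convergence on $B_r$ of its Taylor series at $0$ — but you point at the wrong tool to close it. Theorem \ref{firstthmbosi}(i) only gives normal convergence of an already pointwise-convergent series near the origin, and Theorem \ref{secondthmbosi} is a complexification statement; neither says the Taylor series of an analytic map converges throughout a prescribed ball. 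The result the paper actually uses is Theorem \ref{fourththmbosi}: since $B_r$ is a balanced open neighbourhood of $0$, one has $h_v(x)=\sum_{n\ge 0}\frac{1}{n!}\delta_0^{(n)}h_v(x)$ for all $x\in B_r$, each $\delta_0^{(n)}h_v$ a continuous homogeneous polynomial of degree $n$. Your informal fallback, a ``Cauchy-estimate/identity-principle argument,'' is not a substitute; in an infinite-dimensional Banach space the domain of convergence of the monomial expansion can be strictly smaller than the domain of analyticity, and the balanced-neighbourhood Taylor theorem is precisely the nontrivial Bochnak--Siciak input that rules this out here.

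Once Theorem \ref{fourththmbosi} is in hand, the identification of coefficients should be made explicit the way the paper does it: differentiating $h_v(x)=\pi(\exp(x))v$ along real directions gives $\delta_0^{(n)}h_v(x)=\dd\pi(x)^n v$ for $x\in B_r\cap\g g$, and since both sides are continuous homogeneous polynomials on $\g g^\C$ the identity extends to all of $B_r$ by analytic continuation (equivalently, a continuous $\C$-multilinear map is determined by its values on the real form $\g g$). You gesture at this via ``uniqueness of the Taylor coefficients of the restriction,'' but the passage from the real ball $B_r\cap\g g$ to the full complex ball $B_r$ needs to be stated, because without it the convergence of $\sum_n\frac{1}{n!}\dd\pi(x)^n v$ for genuinely complex $x\in B_r$ does not follow, and that convergence is exactly what membership in $\mathscr H^{\omega,r}$ requires.
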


\begin{proof}
Let $v\in \mathscr H^{\omega,r}$. 
From Theorem 
\ref{firstthmbosi}(ii) 
it follows that the series $f_v(x)$ of \eqref{fvpower}  defines an analytic 
function in $B_r$.
By \eqref{eqn-piexpxv}
we have 
\[
\pi(\exp(x))v=f_v(x) \text{ for every }x\in B_r\cap \g g.
\]
Therefore $\pi^v\circ\exp\big|_{B_r\cap \g g}$ extends to an analytic function in $B_r$.

Conversely, assume that $\pi^v\circ\exp\big|_{B_r\cap \g g}$ extends to an analytic map $h_v:B_r\to \mathscr H$.
Since $B_r$ is a balanced neighbourhood of zero, from Theorem \ref{fourththmbosi} it follows that 
\[
h_v(x)=\sum_{n=0}^\infty \frac{1}{n!}\delta_0^{(n)}h_v(x)\ \text{ for every $x\in B_r$}.
\]
By Theorem \ref{fourththmbosi}(i),  for every $n\geq 0$ the function
$\delta_0^{(n)}h_v:\g g^\C\to\mathscr H$ 
is a continuous homogeneous polynomial of degree $n$.
Observe that $h_v(x)=\pi(\exp(x))v$ for every $x\in B_r\cap \g g$, and by taking
the $n$-th directional derivatives of both sides  we obtain
\begin{equation}
\label{eqn-nthderiv}
\delta_0^{(n)}h_v(x)=\dd\pi(x)^nv\ \text{ for every $x\in B_r\cap \g g$.}
\end{equation}
Both sides of \eqref{eqn-nthderiv} are continuous homogeneous polynomials, and in particular analytic in $B_r$. 
Therefore by analytic continuation,
the equality \eqref{eqn-nthderiv} holds for every  
$x\in B_r$. 
Consequently, the series \eqref{fvpower} converges for every $x\in B_r$, i.e., $v\in\mathscr H^{\omega,r}$.
\qedhere
\end{proof}

\begin{lemma}
\label{lem-analyfv}
Let $G$ be a Banach--Lie group and $\g g=\Lie(G)$. 
Then there exists an $r_\circ>0$ such that for every $0<r<r_\circ$,  every 
unitary representation $(\pi,\mathscr H)$ of $G$, and every $v\in\mathscr H^{\omega,r}$, the following
statements hold.
\begin{itemize}
\item[(i)] $f_v(x)\in\mathscr H^{\omega}$ for every $x\in B_r$.
\item[(ii)] If $a\in\g g^\C$ then the map
\[
u_a:B_r\to \mathcal H\ ,\
u_a(x)=\dd\pi(a)(f_v(x))
\]
is analytic in $B_r$.
\end{itemize}
\end{lemma}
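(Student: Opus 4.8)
The strategy is to reduce both statements to the previously established machinery of analytic maps (Theorems \ref{firstthmbosi}, \ref{secondthmbosi}, \ref{fourththmbosi} in the appendix) together with the Baker--Campbell--Hausdorff (BCH) formula, which is why the uniform radius $r_\circ$ appears: we need $r_\circ$ small enough that the BCH series for $\g g^\C$ converges and defines an analytic map $B_{r_\circ}\times B_{r_\circ}\to\g g^\C$, say $(x,y)\mapsto x\ast y$, with $\exp(x)\exp(y)=\exp(x\ast y)$ on the group side. Concretely one takes $r_\circ$ to be (a fixed fraction of) the radius of convergence of the BCH series in the normalized norm on $\g g^\C$, so that $\|x\|,\|y\|<r_\circ$ forces $x\ast y$ to lie in a controlled ball. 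Fix $0<r<r_\circ$, a unitary representation $(\pi,\mathscr H)$, and $v\in\mathscr H^{\omega,r}$; by \eqref{eqn-piexpxv} we have $\pi(\exp(x))v=f_v(x)$ for $x\in B_r\cap\g g$, and by Lemma \ref{analyticBr} the map $f_v$ is analytic on all of $B_r$.

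For part (i): fix $x\in B_r$. I want to show $f_v(x)\in\mathscr H^{\omega,s}$ for some $s>0$, which by Lemma \ref{convoncplx} gives $f_v(x)\in\mathscr H^\omega$. First treat $x\in B_r\cap\g g$: choose $s>0$ small enough (depending on $r-\|x\|$) that $y\ast x\in B_r$ for all $y\in B_s$; then the map $y\mapsto f_v(y\ast x)$ is analytic on $B_s$ (composition of the analytic BCH map with the analytic $f_v$), and for real $y\in B_s\cap\g g$ it equals $\pi(\exp(y))\pi(\exp(x))v=\pi(\exp(y))f_v(x)$, i.e. it extends $\pi^{f_v(x)}\circ\exp|_{B_s\cap\g g}$ analytically to $B_s$. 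By Lemma \ref{analyticBr} this means $f_v(x)\in\mathscr H^{\omega,s}$, hence $f_v(x)\in\mathscr H^\omega$. For general $x\in B_r$, write $x=a+ib$ with $a,b\in\g g$; since $x\mapsto f_v(x)$ is analytic on $B_r$ and agrees on the real points with $g\mapsto\pi(g)v$, one can use analytic continuation in the complex variable to identify $f_v(x)$ with $\pi(\exp(a))\,f_v(ib)$-type expressions — more cleanly, note that it suffices to prove the claim on a set whose $\R$-linear span (or whose intersection with any real line through $0$) is controlled, and reduce to the real case by the rescaling trick $f_v(x)=f_{?}$; alternatively, observe $\pi(\exp(tx))v$ for $x\in B_r$ makes sense for complex-scaled real directions and invoke Proposition \ref{thm-oneparam}(iii) componentwise. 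In any case the core point is: analyticity of $f_v$ near $f_v(x)$ is inherited from analyticity of $f_v$ near $x$ via the analytic BCH multiplication, and then Lemma \ref{analyticBr}/Lemma \ref{convoncplx} converts this back into the statement $f_v(x)\in\mathscr H^\omega$.

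For part (ii): fix $a\in\g g^\C$. We must show $x\mapsto\dd\pi(a)f_v(x)$ is analytic on $B_r$. Write $a=a_1+ia_2$ with $a_1,a_2\in\g g$; since $\dd\pi(a)=\dd\pi(a_1)+i\dd\pi(a_2)$ it suffices to treat $a\in\g g$. For $a\in\g g$ and $x\in B_r$, use that $f_v(x)\in\mathscr H^\omega$ by part (i), so $\dd\pi(a)f_v(x)$ is defined; I claim it equals $\frac{d}{dt}\big|_{t=0}f_v((ta)\ast x)$, because for real $x\in B_r\cap\g g$ this is $\frac{d}{dt}\big|_{t=0}\pi(\exp(ta))\pi(\exp(x))v=\dd\pi(a)\pi(\exp(x))v=\dd\pi(a)f_v(x)$, and both sides are analytic in $x\in B_r$ (the left side being a directional derivative, at the analytic BCH-point, of the analytic map $f_v$), hence they agree on all of $B_r$ by analytic continuation. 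Since the partial derivative of an analytic map in a Banach-space-analytic sense is again analytic (Theorem \ref{firstthmbosi} / the differentiation theory in Appendix \ref{section-appendix-analyticmaps}), $u_a$ is analytic on $B_r$.

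The main obstacle, and the place requiring genuine care, is the passage from the easy real case to general complex $x\in B_r$ in part (i): on the real locus everything is controlled by the one-parameter group and Proposition \ref{thm-oneparam}, but for nonreal $x$ there is no group element $\exp(x)$ and one must argue purely through analytic continuation of the $\mathscr H$-valued map $f_v$, making sure the identities ``$f_v(y\ast x)=\pi(\exp(y))f_v(x)$'' that one proves on real points actually propagate to the relevant complex neighborhood. Keeping the radii bookkeeping clean — choosing $r_\circ$ once and for all so that BCH converges and composition stays inside $B_r$ — is the other point where one has to be precise, but it is routine given the appendix.
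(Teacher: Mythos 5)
You correctly identify the main obstacle — getting from real $x$ to arbitrary $x\in B_r$ in part (i) — but then you do not actually close it. Your sketch for complex $x$ offers three alternatives (``$\pi(\exp(a))\,f_v(ib)$-type expressions'', ``the rescaling trick $f_v(x)=f_{?}$'', ``invoke Proposition~\ref{thm-oneparam}(iii) componentwise''), none of which is an argument; the second even contains a placeholder. The difficulty is real: for complex $x$ there is no $\exp(x)\in G$, so the identity $f_v(y\star x)=\pi(\exp(y))f_v(x)$, which for real $x$ follows directly from \eqref{eqn-piexpxv}, has to be established some other way, and it is exactly the conclusion you need in order to recognize the orbit map of $f_v(x)$ as analytic.

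The device the paper uses to get this identity for general $x\in B_r$ is a finite-dimensional reduction, which is absent from your sketch. Write $x=x'+ix''$ with $x',x''\in\g g$ and set $\mathscr V=\mathrm{Span}_\C\{x',x''\}$. Fix a \emph{real} $z\in B_{r'}\cap\g g$ (for a suitable $r'$), and compare the two functions of $y\in B_s\cap\mathscr V$ given by $\phi_z(y)=f_v(z\star y)$ and $\psi_z(y)=\pi(\exp(z))f_v(y)$. Both are analytic on $B_s\cap\mathscr V$, and they agree on the real points $y\in B_s\cap\mathscr V\cap\g g$ by \eqref{eqn-piexpxv}; since $\mathscr V$ is a finite-dimensional complex vector space, the identity principle applies and they agree on all of $B_s\cap\mathscr V$, in particular at $y=x$. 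This yields $f_v(z\star x)=\pi(\exp(z))f_v(x)$ for all real $z$ near $0$, so $z\mapsto\pi(\exp(z))f_v(x)$ is analytic and $f_v(x)\in\mathscr H^\omega$. The finite dimensionality of $\mathscr V$ is doing two jobs: it gives compactness of $\overline{B_s}\cap\mathscr V$ (so one can find $r'$ with $z\star y\in B_r$ uniformly), and it permits the one-variable analytic continuation from real to complex $y$. Your proposal keeps $x$ fixed and tries to continue in the group variable $y$ only, which is not enough because the obstruction sits in the $x$-slot of $f_v(y\star x)$.

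Part (ii) of your sketch is sound in outline and does match the paper's argument, but it relies on the identity $f_v(z\star x)=\pi(\exp(z))f_v(x)$ (the paper's \eqref{eqnorbt}) for complex $x$, which is precisely the step your part (i) leaves open. Once that identity is supplied, the rest of (ii) — writing $\dd\pi(a)f_v(x)=\frac{\partial}{\partial\zeta}f_v\big((\zeta a)\star x\big)\big|_{\zeta=0}$ and invoking joint analyticity of $(z,z')\mapsto f_v(z\star z')$ — goes through as you describe.
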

\begin{proof}
(i) Let $z\star z'$ denote the  Baker--Campbell--Hausdorff series for two elements $z,z'\in\g g$ whenever it converges.
Choose $r_\circ>0$ small enough such that the map 
\[
\mu:B_{r_\circ}\times B_{r_\circ}\to\g g^\C\ ,\ \mu(z,z')=z\star z'
\]
is analytic in $B_{r_\circ}\times B_{r_\circ}$.

Let $x\in B_r$. We write $x=x'+ix''$ where 
$x',x''\in\g g$, and consider the complex subspace $\mathscr V=\mathrm{Span}_\C\{x',x''\}$ of $\g g^\C$. 
Choose $s>0$ such that $\|x\|<s<r$, and let $\overline{B_s}$ denote the closure of $B_s$. Since $\mathscr V$ is finite dimensional, $\overline{B_s}\cap \mathscr V$ is a compact subset of $W$. 
It follows that 
there exists an $0<r'<r$ such that  
\[
\{\ z\star z'\ :\ z\in B_{r'}\text{ and }z'\in B_s\cap \mathscr V\ \}\sseq B_r.
\]
Consequently, for every $z\in B_{r'}$ the map
\[
\phi_z:B_s\cap\mathscr V\to \mathscr H\ ,\ 
\phi_z(y)=f_v(z\star y)
\]
is well-defined and analytic. 

Next fix $z\in B_{r'}\cap \g g$ and consider the function 
\[
\psi_z:B_s\to\mathscr H\ ,\
\psi_z(y)=\pi(\exp(z))f_v(y)\]
which is analytic in $B_s$. If $y\in B_s\cap\mathscr V\cap\g g$, then 
$z\star y\in B_r\cap \g g$ and  by
\eqref{eqn-piexpxv} we have
\[
\phi_z(y)=f_v(z\star y)=\pi(\exp(z\star y))v=\pi(\exp(z))\pi(\exp(y))v=\psi_z(y).
\]
As both $\phi_z$ and $\psi_z$ are analytic in $B_s\cap\mathscr V$, it follows that
the equality $\phi_z(y)=\psi_z(y)$ holds for every $y\in B_s\cap\mathscr V$. In particular, for every $z\in B_{r'}\cap \g g$ we have 
\begin{equation}
\label{eqnorbt}
f_v(z\star x)=\pi(\exp(z))f_v(x).
\end{equation} 
This implies that the map 
\[
G\to \mathscr H\ ,\ g\mapsto \pi(g)f_v(x)
\] is analytic in a neighborhood of the identity, i.e., $f_v(x)\in\mathscr H^\omega$.
This completes the proof of (i).

(ii)  It suffices to prove the statement when $a\in\g g$. 
If $0<r<r_\circ$ then there exists an open set $W\sseq B_r\times B_r$ such that $\{0\}\times B_r\sseq W$ and for every 
$(z,z')\in W$ we have
$z\star z'\in B_r$.
Observe that the map
\begin{equation}
\label{eqn-defiofpsi}
\Psi:W\to\mathscr H\ ,\ \Psi(z,z')=f_v(z\star z')
\end{equation} is analytic in $W$. 
The map
\[
\C\to\mathscr H\ ,\ \zeta\mapsto f_v\big((\zeta \cdot a)\star x\big)
\]
is an analytic function of $\zeta$ in a neighborhood of the origin.  
From \eqref{eqnorbt} it follows that for every $x\in B_r$
we have 
\[
\dd\pi(a)(f_v(x))=\frac{\partial}{\partial \zeta}f_v\big((\zeta\cdot a)\star x\big)\Big|_{\zeta=0}.
\] 
From analyticity of the map $\Psi:W\to\mathscr H$ defined in \eqref{eqn-defiofpsi} it follows that
the map 
\[
B_r\to\mathscr H\ ,\ x\mapsto \frac{\partial}{\partial \zeta}f_v\big((\zeta\cdot a)\star x\big)\Big|_{\zeta=0}
\] 
is analytic in $B_r$.  This completes the proof of (ii).
\qedhere

\end{proof}

\section{Representations of Banach--Lie supergroups}

\label{sec-blsupergps}

Our next task is to define the notions of smooth and analytic unitary representations of a Banach--Lie supergroup.
The definitions 
are similar to the one given in \cite[Def. 2]{varadarajan} for finite dimensional Lie supergroups.

\begin{definition}
\label{defi-smoothanalytic}
Let $(G,\g g)$ be a Banach--Lie supergroup. A \emph{smooth unitary representation} 
(respectively, an \emph{analytic unitary representation}) of $(G,\g g)$ is a triple $(\pi,\rho^\pi,\mathscr H)$ satisfying the following properties.
\begin{enumerate}
\item[(i)] $(\pi,\mathscr H)$ is a smooth (respectively, analytic) unitary representation of $G$
on the $\mathbb Z_2$-graded Hilbert space $\mathscr H$ such that for every $g\in G$, the operator
$\pi(g)$ preserves the $\mathbb Z_2$-grading.
\item[(ii)] $\rho^\pi:\g g\to\End_\C(\mathscr B)$ is a representation of the Banach--Lie superalgebra $\g g$, where
$\mathscr B=\mathscr H^\infty$ (respectively, $\mathscr B=\mathscr H^\omega$).
\item[(iii)] $\rho^\pi(x)=\dd\pi(x)\big|_\mathscr B$ for every $x\in\g g_\eev$.
\item[(iv)] $e^{-\frac{\pi i}{4}}\rho^\pi(x)$ is a symmetric operator for every $x\in\g g_\ood$.
\item[(v)] Every element of the component group $G/G^\circ$ has a coset representative $g\in G$ such that $\pi(g)\rho^\pi(x)\pi(g)^{-1}=\rho^\pi(\Ad(g)x)$ for every $x\in\g g_\ood$.

\end{enumerate}
The cateogry of smooth (respectively, analytic) unitary representations of $(G,\g g)$ is denoted by 
$\fctr{Rep}^\infty(G,\g g)$ (respectively, $\fctr{Rep}^\omega(G,\g g)$).
\end{definition}

\begin{rmk}
If $G$ is connected then obviously Definition \ref{defi-smoothanalytic}(v) always holds
trivially. This point is the main
difference
between Definition \ref{defi-smoothanalytic} above and the definition given in \cite[Def. 2]{varadarajan} for finite dimensional Lie groups,
where it is assumed that 
\begin{equation}
\label{eqn-condin}
\pi(g)\rho^\pi(x)\pi(g)^{-1}=\rho^\pi(\Ad(g)x)\ \text{ for every $x\in\g g_\ood$ and every $g\in G$,}  
\end{equation}
while  the infinitesimal action is supposed to satisfy a weaker condition. 
Indeed Proposition \ref{proposition-conjugacy} below implies that for a (possibly disconnected) $G$
the equation 
\eqref{eqn-condin} follows from Definition \ref{defi-smoothanalytic}.  
\end{rmk}

We will need a slightly more general gadget than smooth and analytic unitary representations, and
we introduce it in the next definition.

\begin{definition}
\label{definition-of-pseudo} Let $(G,\g g)$ be a Banach--Lie supergroup. 
A \emph{pre-representation} of 
$(G,\g g)$ is a 4-tuple 
$
(\,\pi,\mathscr H,\mathscr B,\rho^\mathscr B\,)$
which satisfies
the following properties.
\begin{enumerate} 
\item[(i)] $(\pi,\mathscr H)$ is a  unitary representation of $G$  on the 
$\mathbb Z_2$-graded Hilbert space $\mathscr H=\mathscr H_\eev\oplus\mathscr H_\ood$. Moreover, $\pi(g)$ is an even operator for every $g\in G$.
\item[(ii)] $\mathscr B$ is a dense $\mathbb Z_2$-graded 
subspace of $\mathscr H$ such that \[
\mathscr B\sseq \bigcap_{x\in \g g_\eev}\mathcal D\big(\dd\pi(x)\big).
\]
\item[(iii)] $\rho^\mathscr B:\g g\to \End_\mathbb C(\mathscr B)$  
 is a representation of the Banach--Lie superalgebra $\g g$.
\item[(iv)]  If $x\in\g g_\eev$ then $\rho^\mathscr B(x)=\dd\pi(x)\big|_\mathscr B$ and 
$\rho^\mathscr B(x)$ is essentially skew-adjoint.
\item[(v)] If $x\in \g g_\ood$ then $e^{-\frac{\pi i}{4}}\rho^\mathscr B(x)$ is a symmetric operator. 

\item[(vi)] For every element of the component group $G/G^\circ$, there exists a coset representative
$g\in G$ such that $\pi(g)^{-1}\mathscr B\sseq \mathscr B$ and 
\[
\pi(g)\rho^\mathscr B(x)\pi(g)^{-1}=\rho^\mathscr B(\Ad(g)x)\text{ for every $x\in\g g_\ood$.}
\]


\end{enumerate}

\end{definition}
\begin{rmk}
(i) Observe that in Definition \ref{definition-of-pseudo}(iii) there are no continuity assumptions on the  
map $\rho^\mathscr B$. 

(ii) Definition \ref{definition-of-pseudo} implies that $\mathscr B\sseq\bigcap_{n\in \N}\mathcal D_n$ where
\begin{equation}
\mathcal D_n=\bigcap_{x_1,\ldots,x_n\in\g g_\eev}\mathcal D\big(\dd\pi(x_1)\cdots\dd\pi(x_n)\big).
\end{equation}
Consequently, it follows from \cite[Thm. 9.4]{khdifferentiable} that $\mathscr B\sseq\mathscr H^\infty$. Since
$\mathscr B$ is assumed to be dense in $\mathscr H$, the unitary representation $(\pi,\mathscr H)$ of $G$ is smooth.

(iii) When $G$ is connected, Definition \ref{definition-of-pseudo}(vi) always holds trivially.
The advantage of assuming the conjugacy invariance only for coset representatives (and not 
for every element of $G$) is that Theorem \ref{thm-stabili} will be applicable to the situations 
where $\mathscr B$ is not $G$-invariant. An example of this situation is the Fock space realization of the oscillator representation of 
$(\mathrm{OSp}_\mathrm{res}(\mathscr K),\widehat{\g{osp}}_\mathrm{res}(\mathscr K))$. See Section
\ref{sec-restricted} for further details.
\end{rmk}

For a Banach--Lie group $G$, the subspaces of smooth and analytic vectors of a unitary representation are $G$-invariant. Therefore
Lemma \ref{lem-criteriaforesa} has the following immediate consequence.

\begin{corollary}
\label{cor-prerep}
Let $(G,\g g)$ be a Banach--Lie supergroup, $(\pi,\rho^\pi,\mathscr H)$ be a smooth (respectively, analytic) unitary representation of $(G,\g g)$, and $\mathscr B=\mathscr H^\infty$ (respectively, $\mathscr B=\mathscr H^\omega$).
Then $(\pi,\mathscr H,\mathscr B,\rho^\pi)$ is a pre-representation of $(G,\g g)$.
\end{corollary}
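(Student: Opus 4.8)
The goal is to verify that a smooth (respectively, analytic) unitary representation $(\pi,\rho^\pi,\mathscr H)$ of $(G,\g g)$ gives rise to a pre-representation $(\pi,\mathscr H,\mathscr B,\rho^\pi)$ with $\mathscr B=\mathscr H^\infty$ (respectively, $\mathscr B=\mathscr H^\omega$). This is essentially a checklist: one matches each clause of Definition \ref{definition-of-pseudo} against the corresponding clause of Definition \ref{defi-smoothanalytic}, using the stated invariance of $\mathscr H^\infty$ and $\mathscr H^\omega$ under the $G$-action and Lemma \ref{lem-criteriaforesa}.

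Concretely, the plan is as follows. First I would record that $\mathscr B=\mathscr H^\infty$ (resp.\ $\mathscr H^\omega$) is a dense $\mathbb Z_2$-graded subspace of $\mathscr H$: density is part of the hypothesis that $(\pi,\mathscr H)$ is smooth (resp.\ analytic), and the grading is preserved because each $\pi(g)$ is even, hence commutes with the grading projections, so the orbit map of a homogeneous vector has homogeneous values and $\mathscr H^\infty$, $\mathscr H^\omega$ split as direct sums of their even and odd parts. Next, clause (ii) of Definition \ref{definition-of-pseudo} holds because every smooth vector lies in $\mathcal D(\dd\pi(x))$ for all $x\in\g g_\eev=\Lie(G)$ by Proposition \ref{thm-oneparam}(i), and $\mathscr H^\omega\sseq\mathscr H^\infty$. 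Clause (iii) is Definition \ref{defi-smoothanalytic}(ii), and clause (v) is Definition \ref{defi-smoothanalytic}(iv) verbatim. For clause (iv), the identity $\rho^\pi(x)=\dd\pi(x)\big|_\mathscr B$ for $x\in\g g_\eev$ is Definition \ref{defi-smoothanalytic}(iii); essential skew-adjointness then follows from Lemma \ref{lem-criteriaforesa} applied to the self-adjoint operator $T=i\,\dd\pi(x)$ and the dense subspace $\mathscr L=\mathscr B$, since $\mathscr B$ is $G$-invariant (hence $e^{itT}\mathscr B=\pi(\exp(tx))\mathscr B\sseq\mathscr B$, so hypothesis (a) of that lemma holds). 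Finally, for clause (vi), Definition \ref{defi-smoothanalytic}(v) already provides, for each coset in $G/G^\circ$, a representative $g$ with $\pi(g)\rho^\pi(x)\pi(g)^{-1}=\rho^\pi(\Ad(g)x)$ for $x\in\g g_\ood$, and the requirement $\pi(g)^{-1}\mathscr B\sseq\mathscr B$ is automatic from the $G$-invariance of $\mathscr H^\infty$ and $\mathscr H^\omega$ (indeed $\pi(g)^{-1}\mathscr B=\mathscr B$).

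There is no serious obstacle here; the only points requiring a sentence of justification are the $\mathbb Z_2$-invariance of $\mathscr B$ (which uses that $\pi(g)$ is even) and the application of Lemma \ref{lem-criteriaforesa} to get essential skew-adjointness of $\rho^\pi(x)\big|_\mathscr B$ for $x\in\g g_\eev$ — and even that is already invoked in the paragraph preceding the corollary. Accordingly, the proof is short: after these observations, one simply cites Definition \ref{defi-smoothanalytic} clause by clause. I would phrase it as a direct verification, remarking at the outset that the invariance of $\mathscr H^\infty$ and $\mathscr H^\omega$ under $\pi(G)$ is what makes Lemma \ref{lem-criteriaforesa}(a) apply, and then listing the correspondences (ii)$\leftrightarrow$smoothness, (iii)$\leftrightarrow$(ii), (iv)$\leftrightarrow$(iii)+Lemma \ref{lem-criteriaforesa}, (v)$\leftrightarrow$(iv), (vi)$\leftrightarrow$(v)+invariance.
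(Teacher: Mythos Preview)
Your proposal is correct and follows exactly the paper's approach: the paper states just before the corollary that $\mathscr H^\infty$ and $\mathscr H^\omega$ are $G$-invariant and that Lemma~\ref{lem-criteriaforesa} therefore applies, leaving the clause-by-clause matching implicit. Your write-up simply makes that matching explicit, with the same key ingredient (invariance of $\mathscr B$ under $\pi(G)$ feeding Lemma~\ref{lem-criteriaforesa}(a) for essential skew-adjointness).
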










\begin{lemma}
\label{lem-extension-of-operators}
Let $(G,\g g)$ be a Banach--Lie supergroup and
$\big(\pi,\mathscr H,\mathscr B,\rho^\mathscr B\big)$
be a pre-representation of $(G,\g g)$. Then the following statements hold.
\begin{enumerate}\item[(i)] For every $x\in \g g_\eev$ we have $\overline{\rho^\mathscr B(x)}=\dd\pi(x)$. In particular
$\mathscr H^\infty\sseq \mathcal D(\overline{\rho^\mathscr B(x)})$. 
\item[(ii)] For every $x\in \g g_\ood$ the operator $e^{-\frac{\pi i}{4}}\rho^\mathscr B(x)$ is essentially 
self-adjoint and $\overline{\rho^\mathscr B(x)}^2=\frac{1}{2}\dd\pi([x,x])$. In particular 
$\mathscr H^\infty\sseq \mathcal D(\overline{\rho^\mathscr B(x)})$. 
\end{enumerate}
\end{lemma}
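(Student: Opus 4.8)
The plan is to establish the two parts separately, each time reducing the assertion to the lemmas on unbounded operators collected in Section \ref{sec-normoncomplex} together with the axioms in Definition \ref{definition-of-pseudo}.

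For part (i), fix $x\in\g g_\eev$. By Definition \ref{definition-of-pseudo}(iv) the operator $\rho^\mathscr B(x)=\dd\pi(x)\big|_\mathscr B$ is essentially skew-adjoint, while $\dd\pi(x)$ is skew-adjoint by Stone's theorem. Since $\dd\pi(x)$ is a closed extension of $\rho^\mathscr B(x)$, it also extends the closure $\overline{\rho^\mathscr B(x)}$; but $\overline{\rho^\mathscr B(x)}$ is already skew-adjoint, hence maximal among skew-symmetric operators, so $\overline{\rho^\mathscr B(x)}=\dd\pi(x)$. The inclusion $\mathscr H^\infty\sseq\mathcal D(\dd\pi(x))$ then follows at once from Proposition \ref{thm-oneparam}(i) applied to the one-parameter unitary group $t\mapsto\pi(\exp(tx))$.

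For part (ii), fix $x\in\g g_\ood$ and put $y=[x,x]\in\g g_\eev$. Because $\rho^\mathscr B$ is a representation of the Lie superalgebra $\g g$, on $\mathscr B$ one has the identity $2\,\rho^\mathscr B(x)^2=\rho^\mathscr B([x,x])=\rho^\mathscr B(y)=\dd\pi(y)\big|_\mathscr B$, the last equality by Definition \ref{definition-of-pseudo}(iv). I would now invoke Lemma \ref{varalemma1} with $\mathscr L=\mathscr B$, with $S=e^{-\frac{\pi i}{4}}\rho^\mathscr B(x)$, and with $T=-\tfrac{i}{2}\dd\pi(y)$. The hypotheses are verified as follows: $S$ is symmetric by Definition \ref{definition-of-pseudo}(v) and satisfies $S\mathscr B\sseq\mathscr B$ by Definition \ref{definition-of-pseudo}(iii); $T$ is self-adjoint since $\dd\pi(y)$ is skew-adjoint and multiplication by the purely imaginary scalar $-i/2$ turns a skew-adjoint operator into a self-adjoint one; $\mathscr B\sseq\mathcal D(T)=\mathcal D(\dd\pi(y))$ by Definition \ref{definition-of-pseudo}(ii); $T\big|_\mathscr B=-\tfrac{i}{2}\rho^\mathscr B(y)$ is essentially self-adjoint because $\rho^\mathscr B(y)$ is essentially skew-adjoint (Definition \ref{definition-of-pseudo}(iv)); and finally $S^2\big|_\mathscr B=e^{-\frac{\pi i}{2}}\rho^\mathscr B(x)^2=-\tfrac{i}{2}\rho^\mathscr B(y)=T\big|_\mathscr B$, using $e^{-\frac{\pi i}{2}}=-i$ and the displayed identity. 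Lemma \ref{varalemma1} then yields that $e^{-\frac{\pi i}{4}}\rho^\mathscr B(x)$ is essentially self-adjoint and $\overline{S}^2=T$. Since $\overline{S}=e^{-\frac{\pi i}{4}}\,\overline{\rho^\mathscr B(x)}$, squaring gives $-i\,\overline{\rho^\mathscr B(x)}^2=-\tfrac{i}{2}\dd\pi(y)$, i.e. $\overline{\rho^\mathscr B(x)}^2=\tfrac12\dd\pi([x,x])$. The inclusion $\mathscr H^\infty\sseq\mathcal D(\overline{\rho^\mathscr B(x)})$ then follows from the chain $\mathscr H^\infty\sseq\mathcal D(\dd\pi(y))=\mathcal D(T)\sseq\mathcal D(\overline{S})=\mathcal D(\overline{\rho^\mathscr B(x)})$, where the first inclusion is Proposition \ref{thm-oneparam}(i) and the middle inclusion is the Remark following Lemma \ref{varalemma1}.

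The only place where genuine care is needed — and the principal source of potential slips — is the bookkeeping of the scalar factors $e^{-\frac{\pi i}{4}}$ and $e^{-\frac{\pi i}{2}}=-i$, combined with the two elementary but easily mishandled facts that an essentially skew-adjoint operator multiplied by a nonzero purely imaginary scalar is essentially self-adjoint and that closure commutes with such scalar multiplication. Once these are arranged so that the data genuinely satisfy the hypotheses of Lemma \ref{varalemma1} — a lemma tailored for exactly this situation — the statement follows, the remaining ingredient being only the representation identity $2\rho^\mathscr B(x)^2=\rho^\mathscr B([x,x])$. No further idea is required.
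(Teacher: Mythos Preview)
Your proof is correct and follows exactly the route the paper intends: part (i) is an immediate consequence of Definition \ref{definition-of-pseudo}(iv), and part (ii) is precisely the application of Lemma \ref{varalemma1} with the choices $\mathscr L=\mathscr B$, $S=e^{-\frac{\pi i}{4}}\rho^\mathscr B(x)$, $T=-\tfrac{i}{2}\dd\pi([x,x])$. You have simply unpacked the hypothesis-checking that the paper leaves implicit.
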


\begin{proof}
(i) The statement follows directly from Definition \ref{definition-of-pseudo}(iv).

(ii) The statement follows from 
Lemma \ref{varalemma1}.
\qedhere
\end{proof}
\begin{nota}
Let $(\pi,\mathscr H,\mathscr B,\rho^\mathscr B)$ be a pre-representation of a Banach--Lie supergroup 
$(G,\g g)$. 
For every  $x=x_\eev+x_\ood\in\g g^\C$  we
define a linear operator $\tilde\rho^\mathscr B(x)$ on $\mathscr H$ 
with  $\mathcal D(\tilde\rho^\mathscr B(x))=\mathscr H^\infty$ as follows. If $x_\eev=a_\eev+ib_\eev$ and
$x_\ood=a_\ood+ib_\ood$ where $a_\eev,b_\eev\in \g g_\eev$ and $a_\ood,b_\ood\in\g g_\ood$ 
then for every $v\in\mathscr H^\infty$ we set
\[
\tilde\rho^\mathscr B(x)v=\overline{\rho^\mathscr B(a_\eev)}v+
i\overline{\rho^\mathscr B(b_\eev)}v
+\overline{\rho^\mathscr B(a_\ood)}v
+i\overline{\rho^\mathscr B(b_\ood)}v.
\]
\end{nota}

\begin{proposition}
\label{smooth-preserve}
Let $(G,\g g)$ be a Banach--Lie supergroup and
$\big(\pi,\mathscr H,\mathscr B,\rho^\mathscr B\big)$
be a pre-representation of $(G,\g g)$. Then 
$\tilde\rho^\mathscr B(x)\mathscr H^\infty\sseq \mathscr H^\infty$ 
for every $x\in\g g^\C$.

\end{proposition}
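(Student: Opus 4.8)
The plan is to reduce the claim for a general $x\in\g g^{\C}$ to the case where $x$ lies in $\g g_{\eev}$ or in $\g g_{\ood}$, and then handle those two cases separately. Since $\tilde\rho^{\mathscr B}(x)$ is by definition a complex-linear combination of the four operators $\overline{\rho^{\mathscr B}(a_{\eev})}$, $\overline{\rho^{\mathscr B}(b_{\eev})}$, $\overline{\rho^{\mathscr B}(a_{\ood})}$, $\overline{\rho^{\mathscr B}(b_{\ood})}$ with $a_{\eev},b_{\eev}\in\g g_{\eev}$ and $a_{\ood},b_{\ood}\in\g g_{\ood}$, and $\mathscr H^{\infty}$ is a linear subspace, it suffices to prove $\overline{\rho^{\mathscr B}(x)}\,\mathscr H^{\infty}\sseq\mathscr H^{\infty}$ for $x\in\g g_{\eev}$ and for $x\in\g g_{\ood}$.

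For $x\in\g g_{\eev}$ this is immediate: by Lemma \ref{lem-extension-of-operators}(i) we have $\overline{\rho^{\mathscr B}(x)}=\dd\pi(x)$, and the subspace $\mathscr H^{\infty}$ of smooth vectors of the unitary representation $(\pi,\mathscr H)$ of the Banach--Lie group $G$ is invariant under each $\dd\pi(x)$, $x\in\g g_{\eev}=\Lie(G)$, by the continuity of the map \eqref{ghinfhinf}. So the real content is the odd case. Here I would like to use the following strategy: let $x\in\g g_{\ood}$, set $T=\overline{\rho^{\mathscr B}(x)}$, and recall from Lemma \ref{lem-extension-of-operators}(ii) that $e^{-\frac{\pi i}{4}}T$ is self-adjoint with $T^{2}=\frac12\,\dd\pi([x,x])$, where $[x,x]\in\g g_{\eev}$. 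Fix $v\in\mathscr H^{\infty}$; I must show $Tv\in\mathscr H^{\infty}$. By \cite[Thm. 9.4]{khdifferentiable} (cited already in the Remark after Definition \ref{definition-of-pseudo}), membership in $\mathscr H^{\infty}$ can be tested by checking that a vector lies in $\mathcal D\big(\dd\pi(y_{1})\cdots\dd\pi(y_{n})\big)$ for all $n$ and all $y_{1},\dots,y_{n}\in\g g_{\eev}$. So the task reduces to showing that $Tv$ lies in the domain of all such products of even operators.

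The key observation to push this through is a commutation/intertwining relation between $T$ and the even operators $\dd\pi(y)$ on the domain $\mathscr H^{\infty}$: since $\rho^{\mathscr B}$ is a representation of the Lie superalgebra $\g g$ and $\rho^{\mathscr B}(y)=\dd\pi(y)\big|_{\mathscr B}$ for $y\in\g g_{\eev}$, on $\mathscr B$ one has $\dd\pi(y)\,\rho^{\mathscr B}(x)-\rho^{\mathscr B}(x)\,\dd\pi(y)=\rho^{\mathscr B}([y,x])$, and one wants to propagate this to $\mathscr H^{\infty}$ via the closures, using that $\dd\pi(y)$ maps $\mathscr H^{\infty}$ into itself and that $T$ is closed. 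I expect that one proves, by induction on $n$, a formula expressing $\dd\pi(y_{1})\cdots\dd\pi(y_{n})\,Tv$ as a finite sum of terms of the form $T_{\alpha}\,\dd\pi(z_{1})\cdots\dd\pi(z_{k})v$ where each $T_{\alpha}$ is either $\dd\pi(w)$ for some $w\in\g g_{\eev}$ or the single closed odd operator $T'=\overline{\rho^{\mathscr B}(x')}$ for an appropriate $x'\in\g g_{\ood}$ (arising from iterated brackets $[\,y_{i},\dots\,]$ still landing in $\g g_{\ood}$ since $[\g g_{\eev},\g g_{\ood}]\sseq\g g_{\ood}$), with $z_{j}\in\g g_{\eev}$ and $v\in\mathscr H^{\infty}$; the fact that $\dd\pi(z_{1})\cdots\dd\pi(z_{k})v\in\mathscr H^{\infty}\sseq\mathcal D(T')$ comes from Lemma \ref{lem-extension-of-operators}(ii). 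The main obstacle — and the place I would spend the most care — is the domain bookkeeping: one must verify at each inductive step that the vector produced actually lies in the domain of the next operator $\dd\pi(y_{i})$ (or of $T'$), which is exactly where the identities $T'^{2}=\frac12\dd\pi([x',x'])$ and the invariance of $\mathscr H^{\infty}$ under even operators are used, together with the fact that a closed operator's graph lets one pass limits through. Once this inductive domain statement is established, $Tv\in\bigcap_{n}\mathcal D_{n}$ follows, hence $Tv\in\mathscr H^{\infty}$ by \cite[Thm. 9.4]{khdifferentiable}, completing the proof.
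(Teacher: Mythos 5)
Your reduction to $x\in\g g_\ood$ and your appeal to \cite[Thm. 9.4]{khdifferentiable} to test membership in $\mathscr H^\infty$ via $\bigcap_n\mathcal D_n$ match the paper's strategy exactly. But the key technical step — the place you yourself flag as ``the main obstacle'' — is where the proposal has a genuine gap, and the hand-wave about ``a closed operator's graph lets one pass limits through'' does not close it.

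The difficulty is a circularity. You want to write
$\dd\pi(y)\,\overline{\rho^\mathscr B(x)}\,v = \overline{\rho^\mathscr B(x)}\,\dd\pi(y)v + \overline{\rho^\mathscr B([y,x])}\,v$
for $v\in\mathscr H^\infty$, and then iterate. The right-hand side is well-defined because $\mathscr H^\infty\sseq\mathcal D(\overline{\rho^\mathscr B(z)})$ for homogeneous $z$. But to interpret the left-hand side you must already know $\overline{\rho^\mathscr B(x)}v\in\mathcal D(\dd\pi(y))$, which is precisely what you are trying to prove. To propagate the identity from $\mathscr B$ to $\mathscr H^\infty$ by closedness you would need to approximate $v$ by vectors $w_k\in\mathscr B$ so that both $w_k\to v$ and $\dd\pi(y)w_k\to\dd\pi(y)v$ hold in a way compatible with $\overline{\rho^\mathscr B(x)}$; but $\mathscr B$ is only assumed dense in $\mathscr H$, not dense in $\mathscr H^\infty$ for the Fr\'echet topology, and the continuity of $\overline{\rho^\mathscr B(x)}$ on $\mathscr H^\infty$ is established in the paper only \emph{after} this proposition (Proposition \ref{lem-continu-of-overline}(iii)), so it cannot be invoked here.

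The missing idea is a duality argument. Fix $v\in\mathscr H^\infty$, $x\in\g g_\ood$, $y\in\g g_\eev$. For $w\in\mathscr B$ one computes, using only the bracket relation on $\mathscr B$ and the symmetry properties from Lemma \ref{lem-extension-of-operators},
\[
\langle \overline{\rho^\mathscr B(x)}v,\ \dd\pi(y)w\rangle
=\langle \overline{\rho^\mathscr B(x)}\,\dd\pi(y)v,\ w\rangle
+\langle \overline{\rho^\mathscr B([x,y])}\,v,\ w\rangle ,
\]
so the functional $w\mapsto\langle\overline{\rho^\mathscr B(x)}v,\dd\pi(y)w\rangle$ is continuous on $\mathscr B$. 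Hence $\overline{\rho^\mathscr B(x)}v\in\mathcal D\big((\dd\pi(y)\big|_\mathscr B)^*\big)$, and since $\rho^\mathscr B(y)=\dd\pi(y)\big|_\mathscr B$ is essentially skew-adjoint by Definition \ref{definition-of-pseudo}(iv), this adjoint domain equals $\mathcal D(\dd\pi(y))$. That gives the $n=1$ case with no circularity, because every vector appearing on the right lives in $\mathscr H^\infty$ and every pairing is against test vectors $w\in\mathscr B$ where the algebraic relations genuinely hold. The same computation, iterated, handles general $n$. This adjoint/test-vector trick, together with the essential skew-adjointness of $\rho^\mathscr B(y)$, is the ingredient your outline does not supply and cannot easily be replaced by a direct closedness argument.
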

\begin{proof}
By Lemma \ref{lem-extension-of-operators}(i) and the definition of $\tilde\rho^\mathscr B$  
it suffices to prove the statement when $x\in \g g_\ood$.
As shown in \cite[Thm. 9.4]{khdifferentiable}, we have 
$\mathscr H^\infty=\bigcap_{n\in\N}\mathcal D_n$ where
\[
\mathcal D_n=\bigcap_{x_1,\ldots,x_n\in \g g_\eev}\mathcal D(\dd\pi(x_1)\cdots\dd\pi(x_n)).
\]
Therefore it is enough to prove that  
\begin{equation}
\label{inclusion-eq}
\overline{\rho^\mathscr B(x)}v\in\mathcal D_n
\text{ for every $x\in\g g_\ood$ and $v\in\mathscr H^\infty$}. 
\end{equation}
Let $y\in\g g_\eev$. For every $w\in\mathscr B$, using Lemma \ref{lem-extension-of-operators}
we can write
\begin{align*}
\langle \overline{\rho^\mathscr B(x)}v,\dd\pi(y)w\rangle&=
\langle\overline{\rho^\mathscr B(x)}v,\rho^\mathscr B(y)w\rangle \\
&=e^\frac{\pi i}{2}\langle v,\overline{\rho^\mathscr B(x)}\rho^\mathscr B(y)w\rangle\\
&=e^\frac{\pi i}{2}\langle v,\rho^\mathscr B(y)\rho^\mathscr B(x)w+
\rho^\mathscr B([x,y])w\rangle \\
&=e^\frac{\pi i}{2}
\langle v,\rho^\mathscr B(y)\rho^\mathscr B(x)w\rangle+
e^\frac{\pi i}{2}\langle v,\rho^\mathscr B([x,y])w\rangle  \\
&=\langle\overline{\rho^\mathscr B(x)}\dd\pi(y)v,w\rangle+
\langle \overline{\rho^\mathscr B([x,y])}v,w\rangle.
\end{align*}
It follows that the $\C$-linear functional
\[
\mathscr B\to\C\ ,\ w\mapsto \langle \overline{\rho^\mathscr B(x)}v,\dd\pi(y)w\rangle
\]
is continuous, i.e., 
$\overline{\rho^\mathscr B(x)}v\in\mathcal D\big((\dd\pi(y)\big|_\mathscr B)^*\big)$. 
Since $\dd\pi(y)\big|_\mathscr B=\rho^\mathscr B(y)$ is essentially skew-adjoint,
from Lemma
\ref{lem-extension-of-operators}(i) it follows that $(\dd\pi(y)\big|_\mathscr B)^*=-\dd\pi(y)$, i.e., 
$\overline{\rho^\mathscr B(x)}v\in\mathcal D(\dd\pi(y))$. This proves \eqref{inclusion-eq} for $n=1$.

For $n>1$ the proof of \eqref{inclusion-eq} can be completed by induction. Let $x_1,\ldots,x_n\in\g g_\eev$ and 
$v\in\mathscr H^\infty$. 
Using the induction hypothesis, for every $w\in\mathscr B$ we can write
\begin{align*}
\langle&\dd\pi(x_{n-1})\cdots\dd\pi(x_1)\overline{\rho^\mathscr B(x)}v,\dd\pi(x_n)w\rangle
=e^{(n-\frac{1}{2})\pi i}
\langle v,\rho^\mathscr B(x)\rho^\mathscr B(x_1)\cdots\rho^\mathscr B(x_n)w\rangle\\
&=e^{(n-\frac{1}{2})\pi i}\langle v,
\rho^\mathscr B([x,x_1])\rho^\mathscr B(x_2)\cdots\rho^\mathscr B(x_n)w+
\rho^\mathscr B(x_1)\rho^\mathscr B(x)\rho^\mathscr B(x_2)\cdots\rho^\mathscr B(x_n)w \rangle\\
&=\langle\dd\pi(x_n)\cdots\dd\pi(x_2)\overline{\rho^\mathscr B([x,x_1])}v,w\rangle
+\langle\dd\pi(x_n)\cdots\dd\pi(x_2)\overline{\rho^\mathscr B(x)}\dd\pi(x_1)v,w\rangle.
\end{align*}
An argument similar to the case $n=1$ proves that 
\[
\dd\pi(x_{n-1})\cdots\dd\pi(x_1)\overline{\rho^\mathscr B(x)}v\in\mathcal D(\dd\pi(x_n)).
\]
Consequently, $v\in\mathcal D_n$.
\qedhere
\end{proof}

\begin{proposition}
\label{lem-continu-of-overline}
Let $(G,\g g)$ be a Banach--Lie supergroup and
$\big(\pi,\mathscr H,\mathscr B,\rho^\mathscr B\big)$
be a pre-representation of $(G,\g g)$. Then the following statements hold.
\begin{enumerate}
\item[(i)] The map
\begin{equation}
\label{themapgtohinf}
\g g^\C \times\mathscr H^\infty\to\mathscr H^\infty\ ,\ (x,v)\mapsto \tilde\rho^\mathscr B(x)v
\end{equation}
is $\C$-bilinear.
\item[(ii)] If $x,y\in\g g^\C$ are homogeneous, then for every $v\in\mathscr H^\infty$ we have
\[
\tilde\rho^\mathscr B([x,y])v=\tilde\rho^\mathscr B(x)\tilde\rho^\mathscr B(y)v
-(-1)^{p(x)p(y)}\tilde\rho^\mathscr B(y)\tilde\rho^\mathscr B(x)v
\]
\item[(iii)] The map given in \eqref{themapgtohinf} is continuous. 
\end{enumerate} 
\end{proposition}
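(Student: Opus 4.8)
\textbf{Proof proposal for Proposition \ref{lem-continu-of-overline}.}

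The plan is to treat the three parts in order, since (ii) relies on (i) for reducing to homogeneous generators and (iii) will draw on the explicit bounds that come out of proving (i) together with the continuity of the $G$-action on $\mathscr H^\infty$ recorded in \eqref{ghinfhinf}. For (i), $\C$-linearity in the $\mathscr H^\infty$-variable is immediate from the definition of $\tilde\rho^\mathscr B(x)$ as a sum of the closed operators $\overline{\rho^\mathscr B(\cdot)}$. For $\C$-linearity in $x$, I would first check additivity and real-scalar homogeneity componentwise on $\g g_\eev$ and $\g g_\ood$ separately: on the even part this is just the corresponding property of $x\mapsto\dd\pi(x)$ (via Lemma \ref{lem-extension-of-operators}(i)), and on the odd part one uses that for $x,y\in\g g_\ood$ the operators $e^{-\frac{\pi i}{4}}\rho^\mathscr B(x)$, $e^{-\frac{\pi i}{4}}\rho^\mathscr B(y)$ and $e^{-\frac{\pi i}{4}}\rho^\mathscr B(x+y)$ are all symmetric with a common core $\mathscr B$, so Lemma \ref{lem-two-oper} (or a direct closure argument) forces $\overline{\rho^\mathscr B(x+y)}=\overline{\rho^\mathscr B(x)}+\overline{\rho^\mathscr B(y)}$ on $\mathscr H^\infty$ — here Proposition \ref{smooth-preserve} guarantees $\mathscr H^\infty$ lies in all the relevant domains. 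Finally, passing from $\R$-linearity to $\C$-linearity in $x$ is built into the definition: the prescription $\tilde\rho^\mathscr B(a+ib)=\overline{\rho^\mathscr B(a)}+i\overline{\rho^\mathscr B(b)}$ is exactly the $\C$-linear extension, once one knows the real-linear pieces are well-defined.

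For (ii), by part (i) both sides are $\C$-bilinear in $(x,y)$, so it suffices to verify the super-bracket relation on homogeneous generators $x,y\in\g g_\eev\cup\g g_\ood$, and then only on a dense subspace where all operators act algebraically — namely on $\mathscr B$, where $\rho^\mathscr B$ is by hypothesis an honest Lie superalgebra representation. The subtlety is that the identity is asserted on all of $\mathscr H^\infty$, not just $\mathscr B$, and the operators $\tilde\rho^\mathscr B(x)$ are the closures. So the argument is: on $\mathscr B$ the relation $\rho^\mathscr B([x,y])=\rho^\mathscr B(x)\rho^\mathscr B(y)-(-1)^{p(x)p(y)}\rho^\mathscr B(y)\rho^\mathscr B(x)$ holds by Definition \ref{definition-of-pseudo}(iii); both sides extend to operators defined on all of $\mathscr H^\infty$ by Proposition \ref{smooth-preserve}; and I would argue they agree there by a closedness/continuity argument — approximating $v\in\mathscr H^\infty$ by a sequence in $\mathscr B$ in the Fréchet topology of $\mathscr H^\infty$ (using that $\mathscr B$ is dense in $\mathscr H^\infty$, which follows since $\mathscr B$ is $G$-invariant up to coset representatives and contains a dense $G^\circ$-invariant... — more carefully, one uses that $\mathscr B\supseteq$ a dense subspace and the operators $\tilde\rho^\mathscr B(z)$ are continuous on $\mathscr H^\infty$ once (iii) is known), and invoking the continuity established in (iii). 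To avoid circularity I would instead prove (iii) before finishing (ii), or prove (ii) first directly on $\mathscr H^\infty$ by the graph-closure argument: each $\tilde\rho^\mathscr B(z)$ restricted to $\mathscr H^\infty$ maps into $\mathscr H^\infty$ and its graph (in $\mathscr H\times\mathscr H$) is contained in that of the closed operator $\overline{\rho^\mathscr B(z)}$, so the composite identities, which hold on the dense core $\mathscr B$, propagate by taking closures of the relevant single-operator identities one factor at a time.

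For (iii), the $\C$-bilinear map in \eqref{themapgtohinf} into the Fréchet space $\mathscr H^\infty$ is continuous iff it is separately continuous iff (by bilinearity and the seminorm description) there are estimates $\nrm_n(\tilde\rho^\mathscr B(x)v)\le C_n\,\|x\|\cdot\nrm_{n+1}(v)$, say. On the even part $\g g_\eev$ this is exactly the content of \eqref{ghinfhinf} / \cite[Lem. 4.2]{khdifferentiable}. On the odd part the key input is Lemma \ref{lem-extension-of-operators}(ii): $\overline{\rho^\mathscr B(x)}^2=\frac12\dd\pi([x,x])$, so Lemma \ref{lem-helpshorten} gives $\|\overline{\rho^\mathscr B(x)}v\|\le \|v\|^{1/2}\cdot\big(\tfrac12\|\dd\pi([x,x])v\|\big)^{1/2}\le \tfrac{1}{\sqrt2}\|x\|\cdot\|v\|^{1/2}\nrm_2(v)^{1/2}\le \tfrac{1}{\sqrt2}\|x\|\cdot\nrm_2(v)$, using $\|[x,x]\|\le\|x\|^2$. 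This controls the $\nrm_0$-seminorm of $\tilde\rho^\mathscr B(x)v$; for the higher seminorms one feeds this through the computation in the proof of Proposition \ref{smooth-preserve}, where $\dd\pi(x_1)\cdots\dd\pi(x_k)\overline{\rho^\mathscr B(x)}v$ was expressed via the super-commutation rule in terms of $\overline{\rho^\mathscr B(\text{odd})}$ acting on $\dd\pi(\cdots)v$ plus bracket terms $\overline{\rho^\mathscr B([x,x_j])}v$ — each of which is again an odd element of norm $\le\|x\|\cdot\|x_j\|\le\|x\|$, so the same $\nrm_2$-type bound applies with shifted indices. Iterating yields $\nrm_n(\tilde\rho^\mathscr B(x)v)\le C_n\|x\|\,\nrm_{n+2}(v)$, which is the desired continuity. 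The main obstacle I anticipate is precisely this last bookkeeping: making the inductive seminorm estimate on the odd part clean, i.e. carefully tracking how the commutator terms in the Proposition \ref{smooth-preserve} expansion contribute, and confirming the power of $\nrm$ one loses is uniform in $n$. Everything else is soft (closures of symmetric operators, bilinear extension, density of the core).
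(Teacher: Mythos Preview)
Your approach to (i) and (iii) is essentially the paper's. For (i) the paper does exactly what you propose: reduce to $\R$-linearity on $\g g_\ood$ and use Lemma~\ref{lem-two-oper} on the essentially self-adjoint core $\mathscr B$. For (iii) the paper also uses Lemma~\ref{lem-helpshorten} together with the commutator expansion to produce inductive seminorm estimates; one small slip in your outline is that $\|\dd\pi([x,x])v\|\le\|[x,x]\|\,\nrm_1(v)\le\|x\|^2\nrm_1(v)$ already, so the basic bound is $\nrm_0(\overline{\rho^\mathscr B(x)}v)\lesssim\|x\|\,(\nrm_0(v)+\nrm_1(v))$ rather than involving $\nrm_2$.

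Part (ii) is where you have a genuine gap. Neither of your two suggested routes works as stated. The Fr\'echet-density route fails because there is no reason for $\mathscr B$ to be dense in $\mathscr H^\infty$ for the Fr\'echet topology (only density in $\mathscr H$ is assumed), and you correctly note the circularity with (iii); moreover the paper's proof of (iii) actually \emph{uses} (ii), so reversing the order does not help. The vague ``graph-closure one factor at a time'' idea does not go through either, because the composite $\tilde\rho^\mathscr B(x)\tilde\rho^\mathscr B(y)$ is not itself the restriction of a closed operator in any obvious way. The paper's argument is much cleaner and is exactly the trick you already used in (i): for each parity case, multiply both sides of the asserted identity by the appropriate phase ($e^{-\frac{\pi i}{4}}$ if the bracket is odd, $e^{-\frac{\pi i}{2}}$ if even) so that both sides become \emph{symmetric} operators $P_1,P_2$ with domain $\mathscr H^\infty$; they agree on $\mathscr B$, and $P_1\big|_{\mathscr B}$ is essentially self-adjoint by Lemma~\ref{lem-extension-of-operators}, so Lemma~\ref{lem-two-oper} gives $P_1=P_2$ on all of $\mathscr H^\infty$. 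That is the missing idea.
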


\begin{proof}
(i) By Lemma \ref{lem-extension-of-operators}(i) and the definition of $\tilde\rho^\mathscr B$ 
it is enough to prove that for every $v\in\mathscr H^\infty$ the map
\[
\g g_\ood\to\mathscr H^\infty\ ,\  x\mapsto\overline{\rho^\mathscr B(x)}v
\]
is $\R$-linear. 

Let $x\in\g g_\ood$ and $a\in \R$. Then the equality
\begin{equation}
\label{eqn-scalara}
\overline{\rho^\mathscr B(ax)}v=a\overline{\rho^\mathscr B(x)}v
\end{equation}
holds for every $v\in\mathscr B$, and therefore by Lemma \ref{lem-two-oper} it also holds for every $v\in\mathscr H^\infty$.
A similar reasoning proves that if $x,y\in \g g_\ood$ then for every $v\in\mathscr H^\infty$ we have
\begin{equation}
\label{eqn-sumxy}
\overline{\rho^\mathscr B(x+y)}v=\overline{\rho^\mathscr B(x)}v+\overline{\rho^\mathscr B(y)}v.
\end{equation} 

(ii) It suffices to prove the statement for $x,y\in\g g$. Depending on the parities of $x$ an $y$, there are four cases to consider, but the argument for all of them is esentially the same.
For example, if $x\in\g g_\eev$ and $y\in\g g_\ood$, then we define two operators $P_1$ and $P_2$ with domains
$\mathcal D(P_1)=\mathcal D(P_2)=\mathscr H^\infty$  as follows. If $v\in\mathscr H^\infty$ then we set
\[
P_1v=e^{-\frac{\pi i}{4}}\overline{\rho^\mathscr B([x,y])}v\ \text{ and }\ 
P_2v=e^{-\frac{\pi i}{4}}\Big(\overline{\rho^\mathscr B(x)}\,\overline{\rho^\mathscr B(y)}v-
\overline{\rho^\mathscr B(y)}\,\overline{\rho^\mathscr B(x)}v\Big).
\]
Then $P_1$ and $P_2$ are both symmetric, $P_1\big|_{\mathscr B}=P_2\big|_{\mathscr B}$, and by
Lemma \ref{lem-extension-of-operators}(ii) 
the operator $P_1\big|_{\mathscr B}$ is 
essentially self-adjoint. Lemma \ref{lem-two-oper} implies that $P_1=P_2$.

(iii) As in (i), it is enough to prove that the map
\begin{equation}
\label{eqn-conttinn}
\g g_\ood\times\mathscr H^\infty\to\mathscr H^\infty\;,\; x\mapsto\overline{\rho^\mathscr B(x)}v
\end{equation}
is continuous.
Let $v\in \mathscr H^\infty$ and $y\in\g g_\ood$. 
Setting $P_1=e^{-\frac{\pi i}{4}}\overline{\rho^\mathscr B(y)}$ 
and $P_2=\frac{1}{2}e^{-\frac{\pi i}{2}}\dd\pi([y,y])$
in Lemma \ref{lem-helpshorten} we obtain
\begin{align*}
\|\overline{\rho^\mathscr B(y)}v\|&\leq\frac{1}{\sqrt{2}}\|v\|^\frac{1}{2}\cdot \|\dd\pi([y,y])v\|^\frac{1}{2}\\
&\leq \frac{1}{\sqrt{2}}\nrm_0(v)^\frac{1}{2}\nrm_1(v)^\frac{1}{2}\cdot\|[y,y]\|^\frac{1}{2}
\leq \frac{1}{2\sqrt{2}}\big(\nrm_0(v)+\nrm_1(v)\big)\cdot \|y\|
\end{align*}
from which it follows that 
\begin{equation}
\label{eestone}
\nrm_0\big(\overline{\rho^\mathscr B(y)}v\big)\leq \frac{1}{2\sqrt{2}}\big(\nrm_0(v)+\nrm_1(v)\big)\cdot\|y\|.
\end{equation}
If $x\in\g g_\eev$ satisfies $\|x\|\leq 1$ then $\|[x,y]\|\leq \|y\|$ and using \eqref{eestone} we obtain
\begin{align*}
\|\dd\pi(x)\overline{\rho^\mathscr B(y)}v\|&=\nrm_0\big(\overline{\rho^\mathscr B(x)}\,
\overline{\rho^\mathscr B(y)}v\big)
\leq \nrm_0\big(\overline{\rho^\mathscr B(y)}\,\overline{\rho^\mathscr B(x)}v\big)+
\nrm_0\big(\overline{\rho^\mathscr B([x,y])}v\big)\\
&\leq \frac{1}{2\sqrt{2}}\big(\nrm_0(\dd\pi(x)v)+\nrm_1(\dd\pi(x)v)\big)\cdot \|y\|+\frac{1}{2\sqrt{2}}\big(\nrm_0(v)+\nrm_1(v)\big)\cdot\|[x,y]\|\\
&\leq \frac{1}{2\sqrt{2}}\big(\nrm_1(v)+\nrm_2(v)\big)\cdot\|y\|+\frac{1}{2\sqrt{2}}
\big(\nrm_0(v)+\nrm_1(v)\big)\cdot\|y\|\\
&=\frac{1}{2\sqrt{2}}\big(\nrm_0(v)+2\nrm_1(v)+\nrm_2(v)\big)\cdot\|y\|
\end{align*}
from which it follows that 
\begin{equation}
\label{esttwo}
\nrm_1\big(\overline{\rho^\mathscr B(y)}v\big)\leq\frac{1}{2\sqrt{2}}\big(\nrm_0(v)+2\nrm_1(v)+\nrm_2(v)\big)\cdot\|y\|.
\end{equation}
More generally, if $x_1,\ldots,x_n\in\g g_\eev$ satisfy $\|x_1\|\leq 1,\ldots,\|x_n\|\leq 1$ then we can use the equality
\begin{align*}
\dd\pi(x_1)\cdots\dd\pi(x_n)\overline{\rho^\mathscr B(y)}v
&=\overline{\rho^\mathscr B(y)}\,\overline{\rho^\mathscr B(x_1)}\cdots\overline{\rho^\mathscr B(x_n)}v\\
&+\sum_{i=1}^n\overline{\rho^\mathscr B(x_1)}\cdots\overline{\rho^\mathscr B(x_{i-1})}
\,\overline{\rho^\mathscr B([x_i,y])}\,\overline{\rho^\mathscr B(x_{i+1})}\cdots
\overline{\rho^\mathscr B(x_n)}v
\end{align*}
to prove by induction on $n$ that
\begin{equation}
\label{iineqconvinf}
\nrm_n\big(\rho^\pi(y)v\big)\leq \frac{1}{2\sqrt{2}}\|y\|\cdot\sum_{k=0}^{n+1}\binom{n+1}{k}\nrm_k(v)\ \text{ for every }n\geq 0.
\end{equation}
From \eqref{iineqconvinf} the continuity of \eqref{eqn-conttinn} follows immediately.
\qedhere
\end{proof}

Our next goal is to prove Proposition \ref{proposition-conjugacy} below. The proof of the latter proposition is based on
a subtle lemma from \cite[Chap. 3]{jorgenson}. We use the lemma in the form given in 
\cite{merigon}.
\begin{lemma}
\label{lem-jorgenson}
Let $A$ and $B$ be two linear operators on a complex Hilbert space $\mathscr H$ and $\mathscr D$ be a dense subspace
of $\mathscr H$ with the following properties.
\begin{enumerate}
\item[(i)] $\mathcal D(A)=\mathcal D(B)=\mathscr D$. 
\item[(ii)] $A$ is essentially skew-adjoint.
\item[(iii)] $A\mathscr D\sseq\mathscr D$.
\item[(iv)] $e^{tA}\mathscr D\sseq \mathscr D$ for every $t\in\R$.
\item[(v)] $B$ is closable.
\end{enumerate}
Let $v\in\mathscr D$ be such that the map 
\[\R\to\mathscr H\;,\; t\mapsto BAe^{tA}v
\]
is continuous. Then the map 
\[
\R\to\mathscr H\ ,\ t\mapsto Be^{tA}v
\] 
is differentiable and $\displaystyle\frac{d}{dt}(Be^{tA}v)=BAe^{tA}v$.
\end{lemma}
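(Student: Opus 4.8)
The plan is to differentiate $t\mapsto Be^{tA}v$ by forming the difference quotient, rewriting the increment $e^{(t+h)A}v-e^{tA}v$ as an $\mathscr H$-valued integral via the fundamental theorem of calculus, and then moving $B$ inside the integral using the fact that its closure $\overline B$ is a closed operator.

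First I would record the elementary facts about the group. By (ii) and Stone's theorem, $\overline A$ is the skew-adjoint generator of a strongly continuous one-parameter unitary group, which we write $U(t)=e^{tA}$. Since $v\in\mathscr D\sseq\mathcal D(A)\sseq\mathcal D(\overline A)$, the curve $t\mapsto U(t)v$ is continuously differentiable with $\frac{d}{dt}U(t)v=\overline A\,U(t)v=U(t)\overline A v=U(t)Av$, the last equality because $\overline A$ extends $A$. By (iv) we have $U(t)v\in\mathscr D=\mathcal D(A)$, and since $\overline A$ and $A$ agree on $\mathscr D$ this upgrades to
\[
A\,U(t)v=U(t)Av\qquad\text{for all }t\in\R.
\]
Consequently $BAU(t)v=BU(t)Av$, so the continuity hypothesis reads: $s\mapsto BU(t+s)Av$ is continuous. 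Also $Av\in\mathscr D$ by (iii), hence $U(t+s)Av\in\mathscr D=\mathcal D(B)\sseq\mathcal D(\overline B)$ for every $s$ by (iv), so every expression below is defined.

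Next, applying the fundamental theorem of calculus to the $C^1$ curve $s\mapsto U(t+s)v$ gives, for all $t,h\in\R$,
\[
U(t+h)v-U(t)v=\int_0^h U(t+s)Av\,\,ds,
\]
a Riemann integral of a continuous $\mathscr H$-valued function. Now I would invoke closability of $B$: the curve $\phi(s)=U(t+s)Av$ is continuous with values in $\mathcal D(B)\sseq\mathcal D(\overline B)$, and $s\mapsto\overline B\phi(s)=BU(t+s)Av=BAe^{(t+s)A}v$ is continuous; since $\overline B$ is closed, the standard interchange property of closed operators with Riemann integrals yields $\int_0^h\phi(s)\,ds\in\mathcal D(\overline B)$ and $\overline B\int_0^h\phi(s)\,ds=\int_0^h\overline B\phi(s)\,ds$. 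But the vector $\int_0^h\phi(s)\,ds=U(t+h)v-U(t)v$ lies in $\mathscr D=\mathcal D(B)$ by (iv), so $\overline B$ may be replaced by $B$ there, and we get
\[
BU(t+h)v-BU(t)v=\int_0^h BAU(t+s)v\,\,ds=\int_0^h BAe^{(t+s)A}v\,\,ds.
\]

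Finally, dividing by $h$ and letting $h\to 0$, the right-hand side tends to $BAe^{tA}v$ by continuity of $s\mapsto BAe^{(t+s)A}v$ at $s=0$; hence $t\mapsto Be^{tA}v$ is differentiable with derivative $BAe^{tA}v$. The one genuinely delicate point is the interchange of $\overline B$ with the integral: this is exactly where closability of $B$ is used (a merely closable $B$ need not commute with limits, but its closed extension $\overline B$ does commute with integrals of continuous curves), and it is also where the continuity hypothesis on $t\mapsto BAe^{tA}v$ is essential, since it guarantees that the integrand $\overline B\phi$ is itself continuous and hence integrable.
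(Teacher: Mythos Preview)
Your argument is correct. The paper itself does not give a proof of this lemma; it simply cites \cite[Lem.~5]{merigon}. So there is no in-paper proof to compare against, and your self-contained argument---writing the increment as a Riemann integral via the fundamental theorem of calculus and then pushing the closed operator $\overline{B}$ through the integral---is a clean, standard way to establish the result. The one place to be careful, and you handled it correctly, is that $e^{tA}$ really means $e^{t\overline{A}}$; the hypotheses (iii) and (iv) are exactly what let you replace $\overline{A}$ by $A$ on all vectors that arise, so that the expression $BAe^{tA}v$ makes sense as written.
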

\begin{proof}
See \cite[Lem. 5]{merigon}.
\end{proof}

\begin{proposition}
\label{proposition-conjugacy}
Let $(G,\g g)$ be a Banach--Lie supergroup where $G$ is connected and
$\big(\pi,\mathscr H,\mathscr B,\rho^\mathscr B\big)$
be a pre-representation of $(G,\g g)$. Then for every $g\in G$, every $x\in\g g_\ood^\C$, and every 
$v\in\mathscr H^\infty$ we 
have 
\begin{equation}
\label{eqn-conjugacy}
\pi(g)\tilde\rho^\mathscr B(x)\pi(g)^{-1}v=\tilde\rho^\mathscr B(\Ad(g)x)v.
\end{equation}
\end{proposition}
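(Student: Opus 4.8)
The plan is to reduce the statement to a one-parameter statement, using connectedness of $G$ to write $g$ as a product of exponentials $\exp(y_1)\cdots\exp(y_k)$ with $y_i\in\g g_\eev$, and then to proceed by induction on $k$. For a single factor $g=\exp(y)$ with $y\in\g g_\eev$, the identity \eqref{eqn-conjugacy} becomes, after differentiating in $t$, an assertion about the curve $t\mapsto\pi(\exp(ty))\tilde\rho^\mathscr B(x)\pi(\exp(-ty))v$ versus the curve $t\mapsto\tilde\rho^\mathscr B(\Ad(\exp(ty))x)v=\tilde\rho^\mathscr B(e^{t\ad_y}x)v$. By $\C$-bilinearity (Proposition \ref{lem-continu-of-overline}(i)) it suffices to treat $x\in\g g_\ood$, and by Lemma \ref{lem-extension-of-operators}(i) and the definition of $\tilde\rho^\mathscr B$ the even directions are already handled; so the genuine content is the odd part.

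First I would fix $y\in\g g_\eev$, $x\in\g g_\ood$, and $v\in\mathscr H^\infty$, and consider the two $\mathscr H$-valued functions
\[
F(t)=\pi(\exp(ty))\,\tilde\rho^\mathscr B(x)\,\pi(\exp(-ty))v,\qquad
G(t)=\tilde\rho^\mathscr B\big(e^{t\ad_y}x\big)v.
\]
Both are well-defined: $\pi(\exp(-ty))v\in\mathscr H^\infty$ since $\mathscr H^\infty$ is $G$-invariant, $\tilde\rho^\mathscr B$ preserves $\mathscr H^\infty$ by Proposition \ref{smooth-preserve}, and $e^{t\ad_y}x\in\g g_\ood^\C$ since $\Ad(\exp(ty))$ is an even automorphism. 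They agree at $t=0$. The strategy is to show both satisfy the same linear ODE $\frac{d}{dt}=\tilde\rho^\mathscr B(y)\cdot(-)-$ correction, or more precisely to differentiate each and check $F'(t)=\tilde\rho^\mathscr B(y)F(t)-F(t)\tilde\rho^\mathscr B(y)$ in the appropriate sense, and similarly for $G$ using $\frac{d}{dt}e^{t\ad_y}x=[y,e^{t\ad_y}x]$ together with the bracket relation Proposition \ref{lem-continu-of-overline}(ii). For $G$ this is routine: differentiating the Banach-space-valued curve $t\mapsto e^{t\ad_y}x$ and invoking continuity of the map $\g g^\C\times\mathscr H^\infty\to\mathscr H^\infty$ (Proposition \ref{lem-continu-of-overline}(iii)) gives $G'(t)=\tilde\rho^\mathscr B([y,e^{t\ad_y}x])v=\tilde\rho^\mathscr B(y)G(t)-\tilde\rho^\mathscr B(e^{t\ad_y}x)\tilde\rho^\mathscr B(y)v$.

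The hard part will be differentiating $F$, because it involves composing the unbounded operator $\tilde\rho^\mathscr B(x)$ with the $\mathscr H$-valued (not $\mathscr H^\infty$-valued, a priori, in a way compatible with the operator) curve $t\mapsto\pi(\exp(-ty))v$, and one cannot naively move $\frac{d}{dt}$ past $\tilde\rho^\mathscr B(x)$. This is exactly where Lemma \ref{lem-jorgenson} enters: take $A=\dd\pi(y)$ (essentially skew-adjoint, with $\mathscr H^\infty$ as invariant core on which $e^{tA}$ acts, and $A\mathscr H^\infty\sseq\mathscr H^\infty$), take $B=\overline{\rho^\mathscr B(x)}$ (closable, with $\mathscr H^\infty\sseq\mathcal D(B)$ by Lemma \ref{lem-extension-of-operators}(ii), and $B\mathscr H^\infty\sseq\mathscr H^\infty$ by Proposition \ref{smooth-preserve}), and the vector $e^{-ty}\cdot v$ appropriately; the continuity hypothesis $t\mapsto BAe^{tA}v$ continuous follows from continuity of \eqref{themapgtohinf} and smoothness of $t\mapsto\pi(\exp(ty))v$ in $\mathscr H^\infty$. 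Lemma \ref{lem-jorgenson} then yields that $t\mapsto\overline{\rho^\mathscr B(x)}\pi(\exp(-ty))v$ is differentiable with derivative $-\overline{\rho^\mathscr B(x)}\dd\pi(y)\pi(\exp(-ty))v=-\tilde\rho^\mathscr B(x)\tilde\rho^\mathscr B(y)\pi(\exp(-ty))v$; combining with smoothness of the outer $\pi(\exp(ty))$ and the bracket relation, one gets $F'(t)=\tilde\rho^\mathscr B(y)F(t)-\tilde\rho^\mathscr B(x)\tilde\rho^\mathscr B(y)(\cdots)$, matching $G'$ after using that $F$ and $G$ satisfy the same first-order linear equation. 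A Gronwall / uniqueness argument for the $\mathscr H$-valued linear ODE, or simply the observation that $t\mapsto F(t)-G(t)$ satisfies a homogeneous linear equation with zero initial value, forces $F\equiv G$; setting $t=1$ handles a single exponential factor, and the induction on the number of factors closes the proof.
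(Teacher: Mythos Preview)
Your reduction to one-parameter subgroups and your identification of Lemma \ref{lem-jorgenson} as the tool for differentiating through the unbounded operator are both correct, and this is exactly how the paper starts. The gap is in the ODE comparison. Differentiating carefully (using Lemma \ref{lem-jorgenson} for the inner factor and the product rule for the outer $\pi(\exp(ty))$) gives
\[
F'(t)=\dd\pi(y)F(t)-\pi(\exp(ty))\,\tilde\rho^\mathscr B(x)\,\dd\pi(y)\,\pi(\exp(-ty))v,
\]
\[
G'(t)=\dd\pi(y)G(t)-\tilde\rho^\mathscr B(e^{t\ad_y}x)\,\dd\pi(y)v.
\]
The two inhomogeneous terms are precisely $F$ and $G$ evaluated at the vector $\dd\pi(y)v$ instead of $v$; they agree only if the proposition is already known for $\dd\pi(y)v$. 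So $F-G$ does \emph{not} satisfy a homogeneous linear ODE in any obvious sense, and the Gronwall/uniqueness step is circular as written.

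The paper avoids this by a different parametrization: it sets $A(s)=\pi(\exp((1-s)y))$, $\gamma(s)=\Ad(\exp(sy))x$, and considers the single curve
\[
s\mapsto K(s)\gamma(s)=A(s)\,\tilde\rho^\mathscr B(\gamma(s))\,A(s)^{-1}v,
\]
which equals the left side of \eqref{eqn-conjugacy} at $s=0$ and the right side at $s=1$. Differentiating $K(s)$ at fixed argument (via Lemma \ref{lem-jorgenson}) gives $A(s)[\tilde\rho^\mathscr B(\gamma(s)),\tilde\rho^\mathscr B(y)]A(s)^{-1}v$, while differentiating $\gamma(s)$ gives $K(s)[y,\gamma(s)]=A(s)\tilde\rho^\mathscr B([y,\gamma(s)])A(s)^{-1}v$; these cancel by Proposition \ref{lem-continu-of-overline}(ii), so the curve is constant. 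No ODE uniqueness is needed. One further ingredient you did not mention is required for the second differentiation: to control $K(s+h)\big(\tfrac{1}{h}(\gamma(s+h)-\gamma(s))-\gamma'(s)\big)$ one needs a uniform bound on $\|K(t)\|_{\mathrm{Op}}$ for $t$ in a compact interval, which the paper obtains from the Banach--Steinhaus theorem after observing that each $K(t)$ is bounded (Proposition \ref{lem-continu-of-overline}) and $t\mapsto K(t)x$ is continuous for every $x$.
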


\begin{proof}
Fix $y\in\g g_\eev$ and set $A(s)=\pi(\exp((1-s)y))$ for every $s\in\R$. 
For every $v\in\mathscr H^\infty$ and every $s\in\R$ define 
\[
K(s):\g g_\ood^\C\to \mathscr H\,,\, K(s)x=A(s)
\tilde\rho^\mathscr B(x)
A(s)^{-1}v.
\]
If $g=\exp(y)$ and $\gamma(s)=\Ad(\exp(sy))x$ for every $s\in\R$  then the left hand side of \eqref{eqn-conjugacy}  is equal to $K(0)\gamma(0)$ and the right hand side 
of \eqref{eqn-conjugacy} is equal to $K(1)\gamma(1)$. Therefore it suffices to prove that 
the map
$s\mapsto K(s)\gamma(s)$ is constant.

By Proposition \ref{lem-continu-of-overline}, for every $t\in\R$ the operator $K(t)$ is
bounded. From continuity of the maps \eqref{ghinfhinf} and \eqref{gghinfhinf} it follows that 
the map 
\[
\R\to\mathscr H\,,\, s\mapsto \tilde\rho^\mathscr B(x)\tilde\rho^\mathscr B(y)A(s)^{-1}v
\] 
is continuous as well. It follows from
Lemma \ref{lem-jorgenson} that the map
\[
\eta:\R\mapsto\mathscr H\, ,\, \eta(s)=\tilde\rho^\mathscr B(x)A(s)^{-1}v
\]
is differentiable, and $\eta'(s)=\tilde\rho^\mathscr B(x)\tilde\rho^\mathscr B(y)A(s)^{-1}v$. 

Next we show that the map 
\begin{equation}
\label{map-k(s)x}
\R\to\mathscr H\, ,\,
s\mapsto K(s)x
\end{equation}
is differentiable and we compute its derivative. 
Observe that
\begin{align*}
\frac{d}{ds}(K(s)x)&=\frac{d}{ds}(A(s)\eta(s))=\lim_{h\to 0}\frac{1}{h}
\big(A(s+h)\eta(s+h)-A(s)\eta(s)\big)
\end{align*}
and 
\begin{align}
\label{eqn-as+h}
\notag
\frac{1}{h}&\big(A(s+h)\eta(s+h)-A(s)\eta(s)\big)\\
&=A(s+h)\Big(\frac{1}{h}\big(\eta(s+h)-\eta(s)\big)\Big)+
\frac{1}{h}\big(A(s+h)\eta(s)-A(s)\eta(s)\big).
\end{align}
The first term in \eqref{eqn-as+h} can be written as 
\[
A(s+h)\Big(\frac{1}{h}\big(\eta(s+h)-\eta(s)\big)-\eta'(s)\Big)+A(s+h)\eta'(s).
\]
Since $\|A(s+h)\|_\mathrm{Op}=1$, when $h\to 0$ we obtain 
\[
A(s+h)\Big(\frac{1}{h}\big(\eta(s+h)-\eta(s)\big)-\eta'(s)\Big)\to 0
\text{ and }A(s+h)\eta'(s)\to A(s)\eta'(s).
\]
Since 
$\eta(s)\in\mathscr H^\infty$, as $h\to 0$ the second term in \eqref{eqn-as+h} converges to 
$
-A(s)\tilde\rho^\mathscr B(y)\eta(s)
$.
It follows that 
\begin{equation}
\label{derivofksx}
\frac{d}{ds}\big(K(s)x\big)=A(s)\eta'(s)-A(s)\tilde\rho^\mathscr B(y)\eta(s)=A(s)
[\tilde\rho^\mathscr B(x),\tilde\rho^\mathscr B(y)]A(s)^{-1}v.
\end{equation}
To complete the proof, it suffices to show that $\frac{d}{ds}(K(s)\gamma(s))=0$ for every $s\in\R$.
We have
\begin{align}
\label{eqn-ks+hfinalapprox}
\notag \frac{1}{h}&\big(K(s+h)\gamma(s+h)-K(s)\gamma(s)\big)\\
&=K(s+h)\Big(\frac{1}{h}\big(\gamma(s+h)-\gamma(s)\big)\Big)+
\frac{1}{h}\big(K(s+h)\gamma(s)-K(s)\gamma(s)\big).
\end{align}
The first term in \eqref{eqn-ks+hfinalapprox} can be written as 
\[
K(s+h)\Big(\frac{1}{h}\big(\gamma(s+h)-\gamma(s)\big)-\gamma'(s)\Big)+K(s+h)\gamma'(s)
\]
Differentiability of the map given in \eqref{map-k(s)x} implies that it is continuous, and in particular 
if $I$ is a compact interval containing $s$, then $\sup_{t\in I}\|K(t)x\|<\infty$. The Banach--Steinhaus Theorem
implies that $\sup_{t\in I}\|K(t)\|_\mathrm{Op}<\infty$. It follows that as $h\to 0$ the first term in 
\eqref{eqn-ks+hfinalapprox} converges to 
\[
K(s)\gamma'(s)=A(s)\tilde\rho^\mathscr B([y,\gamma(s)])A(s)^{-1}v.
\]
By \eqref{derivofksx}, the second term in \eqref{eqn-ks+hfinalapprox} converges to
\[
A(s)[\tilde\rho^\mathscr B(\gamma(s)),\tilde\rho^\mathscr B(y)]A(s)^{-1}v.
\]
It follows that 
\begin{align*}
\frac{d}{ds}(K(s)\gamma(s))=A(s)\tilde\rho^\mathscr B([y,\gamma(s)])A(s)^{-1}v
+A(s)[\tilde\rho^\mathscr B(\gamma(s)),\tilde\rho^\mathscr B(y)]A(s)^{-1}v=0.
\end{align*}
\end{proof}

\begin{proposition}
\label{mainthmmm}
Let $(G,\g g)$ be a Banach--Lie supergroup  and
$\big(\pi,\mathscr H,\mathscr B,\rho^\mathscr B\big)$
be a pre-representation of $(G,\g g)$. 
Then there exists an $r_\circ>0$ such that for every positive $r<r_\circ$ and 
every $x\in\g g^\C$ we have 
$\tilde\rho^\mathscr B(x)\mathscr H^{\omega,r}\sseq \mathscr H^{\omega,r}$.
In particular, for every $x\in \g g^\C$ we have $\tilde\rho^\mathscr B(x)\mathscr H^\omega\sseq\mathscr H^\omega$.


\end{proposition}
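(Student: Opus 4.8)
The plan is to reduce, by means of Lemma \ref{analyticBr}, to an analytic-continuation statement about a single orbit map, and then to upgrade the hypothesis $v\in\mathscr H^{\omega,r}$ — which a priori only controls the power series $\sum_k\frac{1}{k!}\dd\pi(y)^kv$ in the norm of $\mathscr H$ — to control in the first smooth-vector seminorm, which is what is needed in order to apply the unbounded odd operators. First I would fix $r_\circ$ so small that the Baker--Campbell--Hausdorff map and the map $(g,z)\mapsto\Ad(g)z$ are analytic on the pertinent balls (so that $y\mapsto\Ad(\exp y)x$ extends to a holomorphic map $\alpha\colon B_{r_\circ}\to\g g^\C$) and so that Lemma \ref{lem-analyfv} is applicable. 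One may assume $G$ connected: $\mathscr H^\infty$, $\mathscr H^{\omega,r}$ and the operators $\dd\pi(z),\tilde\rho^\mathscr B(z)$ are unchanged on replacing $\pi$ by $\pi|_{G^\circ}$, and $(\pi|_{G^\circ},\mathscr H,\mathscr B,\rho^\mathscr B)$ is a pre-representation of $(G^\circ,\g g)$, so that Proposition \ref{proposition-conjugacy} applies. Now fix $0<r<r_\circ$ and $v\in\mathscr H^{\omega,r}$; then $\tilde\rho^\mathscr B(x)v\in\mathscr H^\infty$ by Proposition \ref{smooth-preserve}, and combining the standard conjugation relation $\pi(g)\dd\pi(X)\pi(g)^{-1}=\dd\pi(\Ad(g)X)$ for the even part of $x$ with Proposition \ref{proposition-conjugacy} for its odd part, together with \eqref{eqn-piexpxv}, yields
\[
\pi(\exp y)\,\tilde\rho^\mathscr B(x)v=\tilde\rho^\mathscr B\big(\alpha(y)\big)\big(f_v(y)\big)\qquad\text{for }y\in B_r\cap\g g .
\]
By Lemma \ref{analyticBr} it then suffices to prove that the right-hand side extends to an analytic map $B_r\to\mathscr H$.

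The heart of the matter is the estimate $\sum_{k\ge 0}\frac{1}{k!}\sup_{\|y\|\le r'}(\nrm_0+\nrm_1)(\dd\pi(y)^kv)<\infty$ for every $r'<r$. Here the $\nrm_0$-part is exactly \eqref{unifboundfv}. For the $\nrm_1$-part I would observe that the proof of Lemma \ref{lem-analyfv}(ii) in fact shows the map $g_1\colon\g g^\C\times B_r\to\mathscr H$, $g_1(a,z')=\dd\pi(a)f_v(z')$, to be jointly analytic, since it is the $\zeta$-derivative at $0$ of the analytic map $(\zeta,a,z')\mapsto f_v\big((\zeta a)\star z'\big)$; its Taylor components in $z'$ at the origin are the polynomials $a\mapsto\frac{1}{k!}\dd\pi(a)\dd\pi(z')^kv$, which are linear in $a$, so Theorem \ref{firstthmbosi} applied to $g_1$ on a ball $B_{r'}\times B_{r'}$, together with that linearity, gives $\sum_k\frac{1}{k!}\sup\{\|\dd\pi(a)\dd\pi(z')^kv\|:\|a\|\le 1,\ \|z'\|\le r'\}<\infty$, i.e. the $\nrm_1$-bound.

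With this estimate in hand, let $\mathscr E$ be the Banach space obtained by completing $\mathscr H^\infty$ for the norm $\nrm_0+\nrm_1$, with canonical injection $\iota\colon\mathscr H^\infty\to\mathscr E$. Each $y\mapsto\dd\pi(y)^kv$ is a continuous $\mathscr E$-valued homogeneous polynomial of degree $k$: it is a continuous homogeneous polynomial into $\mathscr H$ by \cite[Lem.~4.4]{neebanalytic}, its polarisation is a fixed linear combination of products $\dd\pi(z_1)\cdots\dd\pi(z_k)v$, and the $(\nrm_0+\nrm_1)$-norm of such a product is bounded by a constant times $\|z_1\|\cdots\|z_k\|$ by the continuity of \eqref{ghinfhinf}. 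By the estimate the series $\sum_k\frac{1}{k!}\iota\big(\dd\pi(\cdot)^kv\big)$ converges locally uniformly on $B_r$, and since $f_v(z')\in\mathscr H^\omega\sseq\mathscr H^\infty$ by Lemma \ref{lem-analyfv}(i) its sum is $\iota\circ f_v$; hence $\iota\circ f_v\colon B_r\to\mathscr E$ is analytic (Theorem \ref{firstthmbosi}). On the other hand the bound $\|\tilde\rho^\mathscr B(z)u\|\le C\|z\|(\nrm_0(u)+\nrm_1(u))$ for $z\in\g g^\C$, which follows from \eqref{eestone} and the definition of $\tilde\rho^\mathscr B$, shows that $\tilde\rho^\mathscr B$ extends to a continuous — hence analytic — bilinear map $\g g^\C\times\mathscr E\to\mathscr H$. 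Composing this map with the analytic maps $y\mapsto\alpha(y)$ and $y\mapsto\iota(f_v(y))$ shows that $y\mapsto\tilde\rho^\mathscr B(\alpha(y))(f_v(y))$ is analytic on $B_r$, so $\tilde\rho^\mathscr B(x)v\in\mathscr H^{\omega,r}$; and the final assertion is then immediate, since $\mathscr H^\omega=\bigcup_{r>0}\mathscr H^{\omega,r}$ and $\mathscr H^{\omega,r}\sseq\mathscr H^{\omega,r'}$ whenever $r'<r$.

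The step I expect to be the main obstacle is the $\nrm_1$-estimate: in finite dimensions one can get away with cruder arguments, whereas here one genuinely has to extract joint analyticity of $g_1$ from the orbit map and then convert the resulting Bochnak--Siciak bounds into seminorm control. This is precisely the place where the classical proof of the analogue of \cite[Prop.~2]{varadarajan} breaks down and where the analytic-map machinery of the appendix is needed.
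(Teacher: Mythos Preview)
Your approach is sound and genuinely different from the paper's. Both proofs reduce to Lemma~\ref{analyticBr} via the identity $\pi(\exp y)\tilde\rho^\mathscr B(x)v=\tilde\rho^\mathscr B(e^{\ad_y}x)f_v(y)$ for real $y$, but they diverge at the key step of extending the right-hand side analytically to all of $B_r$. The paper works directly with the two-variable map $\varphi(y,z)=\tilde\rho^\mathscr B(e^{\ad_y}x)f_v(z)$ and proves separate analyticity, invoking Hartogs (Theorem~\ref{thirdthmbosi}); the hard direction ($z$-analyticity for fixed $y$) is handled by the spectral identity $\tilde\rho^\mathscr B(a)=e^{\pi i/4}T(I+T^2)^{-1}(I+T^2)$ with $T=e^{-\pi i/4}\overline{\rho^\mathscr B(a)}$ self-adjoint and $T(I+T^2)^{-1}$ bounded, which reduces the problem to Lemma~\ref{lem-analyfv}(ii). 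Your route---upgrading $f_v$ to an analytic map into the completion $\mathscr E$ of $(\mathscr H^\infty,\nrm_0+\nrm_1)$ and then composing with the bounded bilinear extension of $\tilde\rho^\mathscr B$---is more conceptual and makes the role of the estimate~\eqref{eestone} transparent; the paper's spectral trick is quicker but more ad hoc.

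There is, however, one imprecision in your $\nrm_1$-step that matters for getting the \emph{same} $r$ back. Theorem~\ref{firstthmbosi}(i) only yields normal convergence on \emph{some} neighbourhood of the origin, not on $B_{r'}\times B_{r'}$ for every $r'<r$; in infinite dimensions the radius of normal convergence of a power series can be strictly smaller than its radius of pointwise convergence, so your displayed estimate does not follow as written. What does go through is this: from the joint analyticity of $g_1$ on $\g g_\eev^\C\times B_r$ and its linearity in $a$, the map $z'\mapsto\big(a\mapsto g_1(a,z')\big)$ is analytic as a map $B_r\to L(\g g_\eev^\C,\mathscr H)$ (expand $g_1$ locally at each $(0,z_0')$ and rescale in $a$ by linearity). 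Since $\iota:\mathscr H^\infty\to\mathscr E$ factors isometrically through $\mathscr H\oplus L(\g g_\eev^\C,\mathscr H)$ via $u\mapsto(u,\dd\pi(\cdot)u)$, this gives $\iota\circ f_v:B_r\to\mathscr E$ analytic on all of $B_r$, which is exactly what your final composition needs. Equivalently, Theorem~\ref{fourththmbosi} applied to this $L$-valued analytic map gives pointwise convergence of $\sum_k\frac{1}{k!}\iota(\dd\pi(z')^kv)$ on $B_r$, and then Theorem~\ref{firstthmbosi}(ii) does the rest.
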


\begin{proof}
It suffices to prove the statement when $x$ is a homogeneous element of $\g g$. 
We give the argument for $x\in\g g_\ood$. The argument for the case
$x\in\g g_\eev$ is analogous.

Let $r_\circ$ be the constant 
obtained from Lemma \ref{lem-analyfv} and $v\in\mathscr H^{\omega,r}$ where $0<r<r_\circ$. 
Recall that
\[
B_r=\{\ y\in\g g_\eev^\C \ :\ \|y\|<r\ \}
\]
The map $\displaystyle y\mapsto f_v(y)=\sum\frac{1}{n!}\dd\pi(x)^nv$ 
of Lemma \ref{convoncplx} is analytic in $B_r$. 
Since 
\[
\sum_{n=0}^\infty\frac{1}{n!}\|\ad_y^n(x)\|\leq\sum_{n=0}^\infty \frac{1}{n!}\|y\|^n\cdot\|x\|<\infty,
\]
Theorem \ref{firstthmbosi}(ii)
implies that the map
\begin{equation}
\label{eqn-adjointact}
\g g_\eev^\C\to\g g_\ood^\C\,,\,  y\mapsto e^{\ad_y}x=\sum_{n=0}^\infty\frac{1}{n!}\ad_y^n(x)
\end{equation}
is analytic in $\g g_\eev^\C$. 

Consider the map
\[
\varphi: \g g_\eev^\C\times B_r\to\mathscr H\,,\,
\varphi(y,z)=\tilde\rho^\mathscr B(e^{\ad_y}x)f_v(z).
\] 
Note that if $z\in B_r$ then by Lemma \ref{lem-analyfv}(i) we have $f_v(z)\in\mathscr H^\omega$ and therefore $\varphi$  is well-defined. 
Our next goal is to prove that $\varphi$ is separately analytic. 

Fix $z\in B_r$ and let $w=f_v(z)$. 
We have $w\in\mathscr H^\omega$. Proposition \ref{lem-continu-of-overline} 
implies that the map
\[
\g g_\ood^\C\to \mathscr H\,,\, u\mapsto \tilde\rho^\mathscr B(u)w
\]
is $\C$-linear and continuous. Therefore analyticity of 
the map given in \eqref{eqn-adjointact}
implies that the map $y\mapsto\varphi(y,z)$ is analytic in $\g g_\eev^\C$.

Next we prove analyticity in $B_r$ of the map $z\mapsto \varphi(y,z)$.
Fix $y\in\g g_\eev^\C$ and set $\tilde{y}=\sum_{n=0}^\infty\frac{1}{n!}\ad_y^n(x)$. Writing 
$\tilde y=a+ib$ where $a,b\in\g g_\ood$ and using $\C$-linearity of $\tilde\rho^\mathscr B$, 
it turns out that it suffices to 
prove analyticity in $B_r$  of the maps \[
z\mapsto \tilde\rho^\mathscr B(a)f_v(z)\ \text{ and }\
z\mapsto \tilde\rho^\mathscr B(b)f_v(z). 
\]
The argument for both cases is the same, and we only give it for the first case. 
Lemma \ref{lem-analyfv}(ii) implies that the map 
\[
B_r\mapsto \mathscr H\,,\, z\mapsto\tilde\rho^\mathscr B([a,a])f_v(z)
\]
is analytic in $B_r$.
The operator $T=e^{-\frac{\pi i}{4}}\overline{\tilde\rho^\mathscr B(a)}$ is self-adjoint and spectral theory
implies that $T(I+T^2)^{-1}$ is a bounded operator. Using Proposition \ref{lem-continu-of-overline}(ii)
we can write
\begin{align*}
\tilde\rho^\mathscr B(a)f_v(z)&=e^{\frac{\pi i}{4}}T(I+T^2)^{-1}(I+T^2)f_v(z)\\
&=e^{\frac{\pi i}{4}}T(I+T^2)^{-1}
\big(f_v(z)+\frac{1}{2}e^{-\frac{\pi i}{2}}\tilde\rho^\mathscr B([a,a])f_v(z)\big)
\end{align*}
from which it follows that the map $z\mapsto \tilde\rho^\mathscr B(a)f_v(z)$ is analytic in $B_r$.

By Theorem \ref{thirdthmbosi}, the separately analytic map $\varphi(y,z)$ is analytic. In particular, 
the map
\[
B_r\to\mathscr H\, ,\, y\mapsto \varphi(y,y)
\]
is analytic in $B_r$. 

Let $y\in B_r\cap\g g_\eev$. By \eqref{eqn-piexpxv} we have
$f_v(y)=\pi(\exp(y))v$.
Write $x=x'+ix''$ where $x',x''\in\g g_\ood$, and set
$g=\exp(y)$. Using Proposition \ref{proposition-conjugacy} we obtain 
\begin{align*}
\varphi(y,y)&= 
\tilde\rho^\mathscr B(e^{\ad_y}x)f_v(y)\\
&=
\tilde\rho^\mathscr B\big(\Ad\big(\exp(y)\big)x'\big)f_v(y)+i\tilde\rho^\mathscr B\big(\Ad\big(\exp(y)\big)x''\big)f_v(y)\\
&
=
\pi(g)\tilde\rho^\mathscr B(x')\pi(g)^{-1}f_v(y)+i\pi(g)\tilde\rho^\mathscr B(x'')\pi(g)^{-1}f_v(y)\\
&=\pi(g)\tilde\rho^\mathscr B(x)\pi(g)^{-1}\pi(g)v=\pi(g)\tilde\rho^\mathscr B(x)v.
\end{align*}
From Lemma \ref{analyticBr} it follows that $\tilde\rho^\mathscr B(x)v\in\mathscr H^{\omega,r}$.
\qedhere
\end{proof}

We can now prove the first main theorem of this article, which states that every pre-representation of a Banach--Lie group corresponds to a unique unitary representation.

\begin{theorem}
\label{thm-stabili}
\textbf{\upshape(Stability Theorem)}
Let  $(\pi,\mathscr H,\mathscr B,\rho^\mathscr B)$ be a  
pre-representation 
of a Banach--Lie supergroup $(G,\g g)$. Then the following statements hold.
\begin{enumerate}
\item[(i)] There exists a unique map 
\[
\rho^\pi:\g g\to\End_\C(\mathscr H^\infty)
\]
such that 
$\rho^\pi(x)\big|_{\mathscr B}=\rho^\mathscr B(x)$ and
$(\pi,\rho^\pi,\mathscr H)$ is a smooth unitary representation of $(G,\g g)$. 
\item[(ii)] If $(\pi,\mathscr H)$ is an analytic unitary representation of $G$, then there exists a unique
map \[
\rho^\pi:\g g\to\End_\C(\mathscr H^\omega)
\] 
such that 
$\overline{\rho^\pi(x)}\big|_{\mathscr B}=\rho^\mathscr B(x)$ and
$(\pi,\rho^\pi,\mathscr H)$ is an analytic unitary representation of $(G,\g g)$. 

\end{enumerate}
\end{theorem}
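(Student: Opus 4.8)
The plan is to take $\rho^\pi$ to be the restriction of the operators $\tilde\rho^\mathscr B$ defined just before Proposition \ref{smooth-preserve}: in case (i) set $\rho^\pi(x):=\tilde\rho^\mathscr B(x)\big|_{\mathscr H^\infty}$ for $x\in\g g$, and in case (ii) set $\rho^\pi(x):=\tilde\rho^\mathscr B(x)\big|_{\mathscr H^\omega}$. That these are genuine endomorphisms of $\mathscr H^\infty$ (resp.\ $\mathscr H^\omega$) is exactly Proposition \ref{smooth-preserve} (resp.\ Proposition \ref{mainthmmm}). Most axioms of Definition \ref{defi-smoothanalytic} are then immediate: (i) holds because $\mathscr B\sseq\mathscr H^\infty$ is dense in $\mathscr H$ (so $(\pi,\mathscr H)$ is smooth, analyticity in case (ii) being an extra hypothesis) and $\pi(g)$ is even by Definition \ref{definition-of-pseudo}(i); (ii) — that $\rho^\pi$ is a representation of the Banach--Lie superalgebra $\g g$ — is Proposition \ref{lem-continu-of-overline}(i),(ii) restricted to $\mathscr H^\infty$ resp.\ $\mathscr H^\omega$; (iii) follows from $\tilde\rho^\mathscr B(x)=\overline{\rho^\mathscr B(x)}\big|_{\mathscr H^\infty}=\dd\pi(x)\big|_{\mathscr H^\infty}$ for $x\in\g g_\eev$ by Lemma \ref{lem-extension-of-operators}(i); and (iv) follows since $e^{-\frac{\pi i}{4}}\rho^\mathscr B(x)$ is symmetric, hence so is its closure and any restriction thereof.

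The one genuinely nontrivial axiom is the conjugacy invariance (v). Fix a coset of $G/G^\circ$ and a representative $g$ as in Definition \ref{definition-of-pseudo}(vi), so that $\pi(g)^{-1}\mathscr B\sseq\mathscr B$ and $\pi(g)\rho^\mathscr B(x)\pi(g)^{-1}=\rho^\mathscr B(\Ad(g)x)$ on $\mathscr B$ for $x\in\g g_\ood$. I would apply Lemma \ref{lem-two-oper} to the operators $P_1=e^{-\frac{\pi i}{4}}\pi(g)\tilde\rho^\mathscr B(x)\pi(g)^{-1}$ and $P_2=e^{-\frac{\pi i}{4}}\tilde\rho^\mathscr B(\Ad(g)x)$ on $\mathscr H^\infty$: Proposition \ref{smooth-preserve} shows $P_1$ maps $\mathscr H^\infty$ into itself, and unitarity of $\pi(g)$ together with symmetry of $e^{-\frac{\pi i}{4}}\tilde\rho^\mathscr B(x)$ shows $P_1$ is symmetric. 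Since $\pi(g)^{-1}\mathscr B\sseq\mathscr B$, the operators $P_1$ and $P_2$ agree on $\mathscr B$, where the common operator is $e^{-\frac{\pi i}{4}}\rho^\mathscr B(\Ad(g)x)$, essentially self-adjoint by Lemma \ref{lem-extension-of-operators}(ii); hence $P_1=P_2$. Restricting to $\mathscr H^\infty$ (resp.\ to the $\pi(g)^{\pm1}$-invariant subspace $\mathscr H^\omega$) gives (v); for the trivial coset one takes $g=e$.

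Uniqueness is again a consequence of Lemma \ref{lem-two-oper}. If $\rho'$ is a second map with the stated properties, then $\rho'(x)$ and $\rho^\pi(x)$ agree on $\mathscr B$ for each $x$: on $\g g_\eev$ both equal $\dd\pi(x)\big|_{\mathscr B}$, and on $\g g_\ood$ both restrict, after multiplication by $e^{-\frac{\pi i}{4}}$, to the essentially self-adjoint operator determined by $\rho^\mathscr B(x)$. In case (i), where the common domain is $\mathscr B=\mathscr H^\infty$, Lemma \ref{lem-two-oper} immediately yields $\rho'(x)=\rho^\pi(x)$. In case (ii) the common domain is $\mathscr H^\omega$ and one must first identify closures: I would show every vector of $\mathscr H^\omega$ is an analytic vector for $T_x:=\overline{e^{-\frac{\pi i}{4}}\rho^\mathscr B(x)}$, using $T_x^2=-\tfrac{i}{2}\,\dd\pi([x,x])$ (Lemma \ref{lem-extension-of-operators}(ii)) to bound even powers $T_x^{2n}v=(-\tfrac i2)^n\dd\pi([x,x])^n v$ by the analytic-vector estimates for the group representation, and interpolating for odd powers via $\|T_x w\|\le\|w\|^{1/2}\|T_x^2 w\|^{1/2}$. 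Then $T_x\big|_{\mathscr H^\omega}$ is essentially self-adjoint by Lemma \ref{lem-criteriaforesa}(b), so $\overline{\rho^\pi(x)}=T_x=\overline{\rho^\mathscr B(x)}$; this both verifies the normalization $\overline{\rho^\pi(x)}\big|_{\mathscr B}=\rho^\mathscr B(x)$ and, applied to $\rho'$ as well, forces $\overline{\rho'(x)}=\overline{\rho^\pi(x)}$, whence $\rho'(x)=\rho^\pi(x)$ since both have domain exactly $\mathscr H^\omega$.

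The main obstacle is essentially already behind us: by the time we reach this theorem the analytic heavy lifting — invariance of $\mathscr H^\infty$ and $\mathscr H^\omega$ (Propositions \ref{smooth-preserve}, \ref{mainthmmm}), continuity and the bracket relation for the infinitesimal action (Proposition \ref{lem-continu-of-overline}), and the connected conjugacy invariance (Proposition \ref{proposition-conjugacy})—has been done, so the theorem is largely careful bookkeeping. Within that bookkeeping the delicate point is the analytic case (ii): proving that $\mathscr H^\omega$ is a core for the self-adjoint operators $T_x$ with $x$ odd, equivalently identifying $\overline{\rho^\pi(x)}$ with $\overline{\rho^\mathscr B(x)}$, which is what makes both the normalization condition and the uniqueness assertion go through.
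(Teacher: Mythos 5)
Your proof is correct and follows essentially the same strategy as the paper: define $\rho^\pi$ as the restriction of $\tilde\rho^\mathscr B$ to $\mathscr H^\infty$ (resp.\ $\mathscr H^\omega$), read off the axioms from Propositions \ref{smooth-preserve}, \ref{mainthmmm}, \ref{lem-continu-of-overline} and Lemma \ref{lem-extension-of-operators}, verify conjugacy invariance and uniqueness by feeding the appropriate pairs of symmetric operators into Lemma \ref{lem-two-oper} with $\mathscr L=\mathscr B$. One phrasing slip: you write that in case (i) ``the common domain is $\mathscr B=\mathscr H^\infty$''; in a pre-representation $\mathscr B$ need only be a dense subspace of $\mathscr H^\infty$, not all of it, but from context you clearly mean $\mathcal D(P_1)=\mathcal D(P_2)=\mathscr H^\infty$ with the agreement set $\mathscr L=\mathscr B$, which is what Lemma \ref{lem-two-oper} requires.

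The only genuine divergence from the paper is in case (ii), where you carry out an explicit analytic-vector estimate showing that every $v\in\mathscr H^\omega$ is analytic for $T_x=\overline{e^{-\pi i/4}\rho^\mathscr B(x)}$ (using $T_x^2=-\tfrac{i}{2}\dd\pi([x,x])$ and the interpolation bound for odd powers) and then invoke Lemma \ref{lem-criteriaforesa}(b). This correctly establishes that $\mathscr H^\omega$ is a core for $T_x$, which is exactly what one needs both to verify the normalization $\overline{\rho^\pi(x)}\big|_{\mathscr B}=\rho^\mathscr B(x)$ for the candidate $\rho^\pi=\tilde\rho^\mathscr B\big|_{\mathscr H^\omega}$ and to run the uniqueness argument. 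The paper compresses this step: having built $(\pi,\rho^\pi,\mathscr H)$ as an analytic representation, it passes to the pre-representation $(\pi,\mathscr H,\mathscr H^\omega,\rho^\pi)$ via Corollary \ref{cor-prerep} and reads essential self-adjointness off Lemma \ref{lem-extension-of-operators}(ii) again (whose hypotheses are met because $\mathscr H^\omega$ is invariant under $\pi(\exp(t[x,x]))$, so Lemma \ref{lem-criteriaforesa}(a) gives essential self-adjointness of $\dd\pi([x,x])\big|_{\mathscr H^\omega}$, and Lemma \ref{varalemma1} does the rest). Your explicit estimate is a perfectly valid, more self-contained alternative, and arguably makes transparent a detail the paper dispatches with ``the proof of uniqueness is similar to the one given in (i).''
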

\begin{proof}
(i) To prove the existence of $\rho^\pi$, we set $\rho^\pi(x)=\tilde\rho^\mathscr B(x)$ for every $x\in\g g$. Proposition \ref{smooth-preserve} implies that 
$\tilde\rho^\mathscr B(x)\in\End_\C(\mathscr H^\infty)$. 
To prove the conjugacy invariance relation 
of Definition
\ref{defi-smoothanalytic}(v), for every element of $G/G^\circ$ we take a coset representative
$g\in G$ which satisfies the condition of Definition \ref{definition-of-pseudo}(vi), and use Lemma 
\ref{lem-two-oper} with $P_1=e^{-\frac{\pi i}{4}}\pi(g)\tilde\rho^\mathscr B(x)\pi(g)^{-1}$, 
$P_2=e^{-\frac{\pi i}{4}}\tilde\rho^\mathscr B(\Ad(g)x)$, and $\mathscr L=\mathscr B$.

To prove uniqueness, it suffices to show that if $(\pi,\rho^\pi,\mathscr H)$ is a 
smooth unitary representation such that for every $x\in\g g$ we have
$\rho^\pi(x)\big|_{\mathscr B}=\rho^\mathscr B(x)$ 
then for every $x\in\g g$ we have 
\begin{equation}
\label{eqn-rhopibigi}
\rho^\pi(x)\big|_{\mathscr H^\infty}=\tilde\rho^\mathscr B(x).
\end{equation}
It suffices to prove \eqref{eqn-rhopibigi} when $x$ is 
homogeneous. If $x\in\g g_\ood$ then 
by Lemma \ref{lem-extension-of-operators}(ii) the operator 
$e^{-\frac{\pi i}{4}}\tilde\rho^\mathscr B(x)\big|_\mathscr B$ is essentially self-adjoint. Therefore
\eqref{eqn-rhopibigi} follows from setting 
$P_1=e^{-\frac{\pi i}{4}}\rho^\pi(x)\big|_{\mathscr H^\infty}$, $P_2=e^{-\frac{\pi i}{4}}\tilde\rho^\mathscr B(x)$, and 
$\mathscr L=\mathscr B$ in
Lemma \ref{lem-two-oper}. The argument for $x\in\g g_\eev$ is similar.

(ii) Existence follows from (i) and Proposition \ref{mainthmmm}. The proof of uniqueness is similar to the one given
in (i).
\end{proof}

Let $\fctr F:\fctr{Rep}^\omega(G,\g g)\to\fctr{Rep}^\infty(G,\g g)$ be the functor defined by
\[
(\pi,\rho^\pi,\mathscr H)\mapsto (\pi,\tilde\rho^{\mathscr H^\omega},\mathscr H).
\]
A morphism between two objects of $\fctr{Rep}^\omega(G,\g g)$ will automatically become a morphism between their images under 
$\fctr F$ in $\fctr{Rep}^\infty(G,\g g)$, and in fact $\fctr F$ is fully faithful. Let $\fctr{Rep}_a^\infty(G,\g g)$ denote the full subcategory of $\fctr{Rep}^\infty(G,\g g)$ whose objects are smooth unitary representations $(\pi,\rho^\pi,\mathscr H)$ of $(G,\g g)$ such that $(\pi,\mathscr H)$ is an analytic unitary representation of $G$.
\begin{corollary}
\label{cor-repa}
The functor $\fctr F$ is an isomorphism of the categories  $\fctr{Rep}^\omega(G,\g g)$ and $\fctr{Rep}^{\infty}_a(G,\g g)$.
\end{corollary}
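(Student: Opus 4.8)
The plan is to verify that $\fctr F$ is fully faithful and bijective on objects; a functor with these two properties is exactly an isomorphism of categories, the inverse $\fctr G$ sending each object of $\fctr{Rep}^\infty_a(G,\g g)$ to the unique object of $\fctr{Rep}^\omega(G,\g g)$ lying above it and leaving morphisms unchanged. First I would record that $\fctr F$ is well defined with image in $\fctr{Rep}^\infty_a(G,\g g)$: given $(\pi,\rho^\pi,\mathscr H)\in\fctr{Rep}^\omega(G,\g g)$, Corollary \ref{cor-prerep} exhibits $(\pi,\mathscr H,\mathscr H^\omega,\rho^\pi)$ as a pre-representation, so Theorem \ref{thm-stabili}(i) makes $(\pi,\tilde\rho^{\mathscr H^\omega},\mathscr H)$ a smooth unitary representation of $(G,\g g)$, while $(\pi,\mathscr H)$ remains an analytic representation of $G$.

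Full faithfulness is the assertion announced just before the corollary, and I would justify it briefly as follows. Faithfulness holds because $\fctr F$ does not alter the underlying bounded operator of a morphism. For fullness, a bounded even intertwiner $T$ of the underlying $G$-representations automatically preserves the subspaces of smooth and of analytic vectors and commutes with $\dd\pi(x)$ for $x\in\g g_\eev$; for $x\in\g g_\ood$, Lemma \ref{lem-extension-of-operators}(ii) identifies $\tilde\rho^{\mathscr H^\omega}(x)$ with the restriction to $\mathscr H^\infty$ of the closure of $\rho^\pi(x)\big|_{\mathscr H^\omega}$, so approximating a smooth vector by analytic ones and using closedness on the target side shows that $T$ intertwines on $\mathscr H^\infty$ precisely when it intertwines on $\mathscr H^\omega$. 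Injectivity on objects is then immediate: if $\fctr F$ identifies $(\pi_1,\rho^{\pi_1},\mathscr H_1)$ and $(\pi_2,\rho^{\pi_2},\mathscr H_2)$, then $\mathscr H_1=\mathscr H_2$, $\pi_1=\pi_2$, and the two smooth extensions produced by $\fctr F$ agree as maps $\g g\to\End_\C(\mathscr H^\infty)$; restricting them to the common space $\mathscr H^\omega$ returns $\rho^{\pi_1}$, respectively $\rho^{\pi_2}$ --- by Definition \ref{defi-smoothanalytic}(iii) on $\g g_\eev$ and, on $\g g_\ood$, because restricting the closure $\overline{\rho^{\pi_i}(x)}$ to its original domain $\mathscr H^\omega$ gives $\rho^{\pi_i}(x)$ --- so the two objects coincide.

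The substantive step is surjectivity on objects. Given $(\pi,\rho^\pi,\mathscr H)\in\fctr{Rep}^\infty_a(G,\g g)$, I would apply Corollary \ref{cor-prerep} to the pre-representation $(\pi,\mathscr H,\mathscr H^\infty,\rho^\pi)$ and, using that $(\pi,\mathscr H)$ is an analytic representation of $G$, invoke Theorem \ref{thm-stabili}(ii) to get a unique $\sigma\colon\g g\to\End_\C(\mathscr H^\omega)$ with $\overline{\sigma(x)}\big|_{\mathscr H^\infty}=\rho^\pi(x)$ and $(\pi,\sigma,\mathscr H)\in\fctr{Rep}^\omega(G,\g g)$. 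Unwinding the definition of $\tilde\sigma^{\mathscr H^\omega}$ and using $\C$-linearity of $\rho^\pi$ yields $\tilde\sigma^{\mathscr H^\omega}(x)=\overline{\sigma(x)}\big|_{\mathscr H^\infty}=\rho^\pi(x)$ on $\mathscr H^\infty$ for every $x\in\g g$, so $\fctr F(\pi,\sigma,\mathscr H)=(\pi,\rho^\pi,\mathscr H)$. Together with the previous paragraphs this shows $\fctr F$ is an isomorphism of categories. The genuine difficulty is concentrated inside Theorem \ref{thm-stabili}(ii) --- the passage from $\mathscr H^\infty$ down to $\mathscr H^\omega$, which rests on Proposition \ref{mainthmmm}; once that is granted, the remainder is bookkeeping, whose only recurring subtlety is the identity $\tilde\rho^{\mathscr H^\omega}(x)\big|_{\mathscr H^\omega}=\rho^\pi(x)$, best isolated once as a short observation about closures of essentially self-adjoint operators.
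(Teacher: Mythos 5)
Your proof is correct and follows the same route as the paper, which compresses the whole argument into a single appeal to the uniqueness clause of Theorem \ref{thm-stabili}(ii); you have simply unpacked this into the standard verification that $\fctr F$ is fully faithful and bijective on objects, adding the small but necessary observation that $\tilde\rho^{\mathscr H^\omega}(x)\big|_{\mathscr H^\omega}=\rho^\pi(x)$ to close the loop on both the injectivity and surjectivity sides.
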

\begin{proof}
Follows immediately from the uniqueness statement of Theorem \ref{thm-stabili}(ii).\qedhere
\end{proof}

\begin{rmk}
A natural question that arises from Corollary \ref{cor-repa} is whether or not  $\fctr F$
is an isomorphism between the categories 
$\fctr{Rep}^\omega(G,\g g)$ and $\fctr{Rep}^\infty(G,\g g)$. 
We answer this question negatively in Appendix \ref{exampleanalytic} by giving an example of a Banach--Lie group 
$G$ with a smooth unitary representation $(\pi,\mathscr H)$ which does not have any analytic vectors.
\end{rmk}

Let $(G,\g g)$ be a Banach--Lie supergroup and $(H,\g h)$ be an integral subsupergroup of $(G,\g g)$. 
One can obtain restriction functors
\[
\fctr{Res}^\infty:\fctr{Rep}^\infty(G,\g g)\to\fctr{Rep}^\infty(H,\g h)
\ \text{ and }\
\fctr{Res}^\omega:\fctr{Rep}^\omega(G,\g g)\to\fctr{Rep}^\omega(H,\g h)
\]
as follows. If $(\pi,\rho^\pi,\mathscr H)$ is a smooth (respectively, analytic) unitary representation of
$(G,\g g)$, then we set $\mathscr L=\mathscr H^\infty$ (respectively, $\mathscr L=\mathscr H^\omega$). 
Observe that $(\pi\big|_H,\mathscr H,\mathscr L,\rho^\mathscr L)$ is a pre-representation of $(H,\g h)$.
The functor
$\fctr{Res}^\infty$ (respectively, $\fctr{Res}^\omega$) maps $(\pi,\rho^\pi,\mathscr H)$ to the unique  smooth (respectively, analytic) unitary representation of $(H,\g h)$ which corresponds to this pre-representation. (The existence and uniqueness of this unitary representation follows from  Theorem \ref{thm-stabili}.)
In conclusion, we have proved the following theorem.

\begin{theorem}
\label{thm-restfunct}
\textbf{\upshape(Restriction Theorem)}
Let  $(G,\g g)$ be a Banach--Lie supergroup, $(H,\g h)$ be an integral subsupergroup of $(G,\g g)$,
$(\pi,\mathscr H, \rho^\pi)$ be a smooth (respectively, analytic) unitary representation of $(G,\g g)$, and 
$\mathscr L=\mathscr H^\infty$ (respectively, $\mathscr L=\mathscr H^\omega$).
Then there exists a unique smooth (respectively, analytic) unitary representation 
$(\sigma,\mathscr H,\rho^\sigma)$ of $(H,\g h)$ with the following properties.
\begin{itemize}
\item[(i)] For every $h\in H$ we have $\pi(h)=\sigma(h)$.
\item[(ii)] For every $x\in \g h$ we have $\rho^\sigma(x)\big|_\mathscr L=\rho^\pi(x)$.
\end{itemize}
\end{theorem}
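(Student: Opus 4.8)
The plan is to deduce the statement directly from the Stability Theorem (Theorem \ref{thm-stabili}) by exhibiting the restricted data as a pre-representation of $(H,\g h)$. Let $(\phi,\varphi)\colon(H,\g h)\to(G,\g g)$ be the defining morphism of the integral subsupergroup, put $\sigma=\pi\circ\phi$, and for $x\in\g h$ set $\rho^{\mathscr L}(x)=\rho^\pi(\varphi(x))$; this makes sense because $\varphi(\g h)\sseq\g g$, so $\rho^\pi(\varphi(x))\in\End_\C(\mathscr L)$ by Definition \ref{defi-smoothanalytic}(ii). First I would check that $(\sigma,\mathscr H,\mathscr L,\rho^{\mathscr L})$ satisfies the six axioms of Definition \ref{definition-of-pseudo}. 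Axioms (i), (iii), (v) are immediate: $\sigma$ is a unitary representation of $H$ by even operators; $\rho^{\mathscr L}=\rho^\pi\circ\varphi$ is a representation of $\g h$ since $\varphi$ is a parity-preserving homomorphism of Lie superalgebras and $\rho^\pi$ is a representation of $\g g$; and symmetry of $e^{-\frac{\pi i}{4}}\rho^\pi(\varphi(x))$ for $x\in\g h_\ood$ follows from $\varphi(x)\in\g g_\ood$ and Definition \ref{defi-smoothanalytic}(iv).

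For the remaining axioms the key facts are the compatibility of exponential maps, $\phi\circ\exp_H=\exp_G\circ\,\dd_e\phi$ on $\g h_\eev$, and that $\mathscr L$ (being $\mathscr H^\infty$ or $\mathscr H^\omega$ of $(\pi,\mathscr H)$) is a dense $\mathbb Z_2$-graded subspace of $\mathscr H$ invariant under $\pi(g)$ for every $g\in G$. The first fact gives $\dd\sigma(x)=\dd\pi(\varphi(x))$ for $x\in\g h_\eev$, hence $\mathscr L\sseq\bigcap_{x\in\g h_\eev}\mathcal D(\dd\sigma(x))$ (axiom (ii)); moreover $\rho^{\mathscr L}(x)=\dd\sigma(x)\big|_{\mathscr L}$ for $x\in\g h_\eev$ by Definition \ref{defi-smoothanalytic}(iii), and this restriction is essentially skew-adjoint because $\mathscr L$ is dense and invariant under $t\mapsto\sigma(\exp_H(tx))=\pi(\exp_G(t\varphi(x)))$, so Lemma \ref{lem-criteriaforesa}(a) applies (axiom (iv)). For axiom (vi) I would take \emph{any} $h\in H$: then $\sigma(h)=\pi(\phi(h))$ preserves $\mathscr L$, and since $\varphi\circ\Ad_H(h)=\Ad_G(\phi(h))\circ\varphi$ (a defining property of a morphism of Banach--Lie supergroups) and the conjugation relation \eqref{eqn-condin} holds for all elements of $G$ — this is Proposition \ref{proposition-conjugacy} together with the Remark following Definition \ref{defi-smoothanalytic} — we get $\sigma(h)\rho^{\mathscr L}(x)\sigma(h)^{-1}=\rho^\pi\big(\Ad_G(\phi(h))\varphi(x)\big)=\rho^\pi\big(\varphi(\Ad_H(h)x)\big)=\rho^{\mathscr L}(\Ad_H(h)x)$ for every $x\in\g h_\ood$, which is more than enough.

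Once $(\sigma,\mathscr H,\mathscr L,\rho^{\mathscr L})$ is known to be a pre-representation, Theorem \ref{thm-stabili}(i) produces the unique $\rho^\sigma\colon\g h\to\End_\C(\mathscr H^\infty)$ with $\rho^\sigma(x)\big|_{\mathscr L}=\rho^{\mathscr L}(x)=\rho^\pi(\varphi(x))$ and $(\sigma,\rho^\sigma,\mathscr H)\in\fctr{Rep}^\infty(H,\g h)$, and properties (i), (ii) of the theorem are built into this. In the analytic case I would first note that $(\pi\big|_H,\mathscr H)$ is an analytic unitary representation of $H$, since the $H$-orbit map of a $G$-analytic vector is the composition of the analytic map $\phi$ with the analytic $G$-orbit map, whence $\mathscr H^\omega_H\supseteq\mathscr H^\omega_G=\mathscr L$ is dense; then Theorem \ref{thm-stabili}(ii) applies verbatim. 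Uniqueness of $(\sigma,\rho^\sigma,\mathscr H)$ follows from property (i), which forces $\sigma=\pi\big|_H$, together with property (ii) and the uniqueness clause of Theorem \ref{thm-stabili}. I do not expect a serious obstacle; the only point that needs care is that $\mathscr L=\mathscr H^\infty_G$ may be strictly smaller than $\mathscr H^\infty_H$, and it is precisely the flexibility of the pre-representation formalism — allowing $\mathscr B$ to be a proper dense invariant subspace — together with the already-established global conjugacy relation that make the argument go through.
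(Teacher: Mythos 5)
Your proposal is correct and takes essentially the same approach as the paper: the paper also constructs the restriction by observing that $(\pi\big|_H,\mathscr H,\mathscr L,\rho^{\mathscr L})$ (with $\rho^{\mathscr L}=\rho^\pi\circ\varphi$) is a pre-representation of $(H,\g h)$ and then invokes Theorem~\ref{thm-stabili}. Your write-up merely makes explicit the verification of the axioms of Definition~\ref{definition-of-pseudo} (in particular that $\mathscr L$ is $H$-invariant because it is $G$-invariant, and that $\mathscr L=\mathscr H^\omega_G\sseq\mathscr H^\omega_H$ so that $\pi\big|_H$ is analytic, which is what lets Theorem~\ref{thm-stabili}(ii) apply) — details the paper leaves to the reader.
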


\section{The Oscillator representation of $(\widehat{\mathrm{OSp}}_\mathrm{res}(\mathscr K),\widehat{\g{osp}}_\mathrm{res})$}
\label{sec-restricted}

In this section we show that the oscillator representation of 
the restricted orthosymplectic Banach--Lie supergroup is an analytic unitary representation in the sense of 
Definition \ref{defi-smoothanalytic}. To simplify the presentation, we have omitted some of the tedious computations. They can be done using the method of \cite[Sec. 9]{neebconflu}.

Let $\mathscr K=\mathscr K_\eev\oplus\mathscr K_\ood$ be a $\mathbb Z_2$-graded complex Hilbert space.
For simplicity we assume that both $\mathscr K_\eev$ and $\mathscr K_\ood$ are infinite dimensional. The case where one or both of these spaces are finite dimensional is similar.

 We denote the inner
product of $\mathscr K$ by $\langle\cdot,\cdot\rangle$. By restriction of scalars we can also consider $\mathscr K$ as a real Hilbert space.

Let $J_+:\mathscr K\to\mathscr K$ denote multiplication by $\sqrt{-1}$, and $J_-:\mathscr K\to\mathscr K$ be the
map defined by 
\[
J_-v=-(-1)^{p(v)}\sqrt{-1}\,v 
\ \text{ for every homogeneous }v\in\mathscr K.
\]
In the following, both $J_+$ and $J_-$ will be considered as $\R$-linear maps.

If $A:\mathscr K\to\mathscr K$ is an $\R$-linear map, then 
$AJ_+-J_+A$ is $\C$-conjugate~linear.
The space of Hilbert--Schmidt $\R$-linear maps on 
$\mathscr K$ (respectively, on $\mathscr K_s$ where $s\in\{\eev,\ood\}$) is denoted by $\mathrm{HS}(\mathscr K)$
(respectively, by $\mathrm{HS}(\mathscr K_s)$).
The group of bounded invertible $\R$-linear maps on $\mathscr K$ (respectively, on $\mathscr K_s$ where $s\in\{\eev,\ood\}$)
is denoted by $\mathrm{GL}(\mathscr K)$ 
(respectively, by $\mathrm{GL}(\mathscr K_s)$).
We set
\[
\mathrm{GL}_\mathrm{res}(\mathscr K)=\{\,T\in\mathrm{GL}(\mathscr K)\ :\ TJ_+-J_+T\in\mathrm{HS}(\mathscr K)\,\}.
\]
The groups $\mathrm{GL}_\mathrm{res}(\mathscr K_s)$, where $s\in\{\eev,\ood\}$, are defined similarly.

Let $\End_\R(\mathscr K)=\End_\R(\mathscr K)_\eev\oplus\End_\R(\mathscr K)_\ood$ denote the superalgebra of bounded $\R$-linear maps 
on $\mathscr K$. Every $T\in\End_\R(\mathscr K)$ can be written in a unique way as 
$T=T_\mathrm{lin}+T_\mathrm{conj}$ where
$T_\mathrm{lin}$ is $\C$-linear and $T_\mathrm{conj}$ is $\C$-conjugate linear. In fact we have
\[
T_\mathrm{lin}=\frac{1}{2}(T-J_+TJ_+)\,\text{ and }\,T_\mathrm{conj}=\frac{1}{2}(T+J_+TJ_+).
\]

The inner product of the complex Hilbert space $\mathscr K$ yields
$\R$-bilinear forms 
$(\cdot,\cdot)_\eev$ on $\mathscr K_\eev$ and $(\cdot,\cdot)_\ood$  on $\mathscr K_\ood$ 
defined by
\[
(v,w)_\eev=\mathrm{Re}\langle v,w\rangle\text{ for every }v,w\in\mathscr K_\eev
\]
and 
\[
(v,w)_\ood=\mathrm{Im}\langle v,w\rangle\text{ for every }v,w\in \mathscr K_\ood.
\]
As a result, we obtain an $\R$-bilinear form $(\cdot,\cdot)=(\cdot,\cdot)_\eev\oplus(\cdot,\cdot)_\ood$ on $\mathscr K=\mathscr K_\eev\oplus\mathscr K_\ood$.

The \emph{restricted orthogonal group} $\mathrm{O}_\mathrm{res}(\mathscr K_\eev)$ is defined by
\[
\mathrm{O}_\mathrm{res}(\mathscr K_\eev)=
\{\, T \in \mathrm{GL}_\mathrm{res}(\mathscr K_\eev)\ :\ (Tv,Tw)_\eev=(v,w)_\eev \text{ for every }v,w\in \mathscr K_\eev\,\}.
\] 
Observe that 
\[\g{o}_\mathrm{res}(\mathscr K_\eev)
=
\Lie(\mathrm{O}_\mathrm{res}(\mathscr K_\eev))=
\{\,T\in\g{gl}_\mathrm{res}(\mathscr K_\eev)\ :\ (Tv,w)_\eev+(v,Tw)_\eev=0\,\},
\]
where \[
\g{gl}_\mathrm{res}(\mathscr K_\eev)=\{\,T\in\End_\R(\mathscr K_\eev)\ :\ T_\mathrm{conj}\in\mathrm{HS}(\mathscr K_\eev)\,\}.
\]
The definitions of the \emph{restricted symplectic group} $\mathrm{Sp}_\mathrm{res}(\mathscr K_\ood)$ and its Lie algebra 
$\g{sp}_\mathrm{res}(\mathscr K_\ood)=\Lie(\mathrm{Sp}_\mathrm{res}(\mathscr K_\ood))$ are analogous.

The Banach--Lie superalgebra
$\g{osp}_\mathrm{res}(\mathscr K)$ is the subspace of $\End_\R(\mathscr K)$ spanned by elements 
$T\in\End_\R(\mathscr K)_\eev\cup\End_\R(\mathscr K)_\ood$
with the following properties.
\begin{itemize}
\item[(i)] For every $v,w\in\mathscr K$, we have  $(Tv,w)+(-1)^{p(T)p(v)}(v,Tw)=0$.

\item[(ii)] $T_\mathrm{conj}\in\mathrm{HS}(\mathscr K)$.

\end{itemize}
The norm $\|\cdot\|$ on $\g{osp}_\mathrm{res}(\mathscr K)$ is given as follows. For every $T\in\g{osp}_\mathrm{res}(\mathscr K)$, 
we set
\[
\|T\|'=\|\,T_\mathrm{lin}\,\|_\mathrm{Op}+\|\,T_\mathrm{conj}\,\|_\mathrm{HS}
\]
where $\|\cdot\|_\mathrm{Op}$ denotes the operator norm of a $\C$-linear operator on the (complex) Hilbert space $\mathscr K$
and $\|\cdot\|_\mathrm{HS}$ denotes the Hilbert-Schmidt norm of an $\R$-linear operator on the (real) Hilbert space $\mathscr K$.
One can prove that $\|\cdot\|'$ is continuous, and therefore by a suitable scaling one obtains a norm $\|\cdot\|$ which satisfies 
\eqref{eqn-normineq}.

The
\emph{restricted orthosymplectic Banach--Lie supergroup} associated to $\mathscr K$ is the 
Banach--Lie supergroup $(\,\mathrm{OSp}_\mathrm{res}(\mathscr K),\g{osp}_\mathrm{res}(\mathscr K)\,)$ where 
\[
\mathrm{OSp}_\mathrm{res}(\mathscr K)=\mathrm O_\mathrm{res}(\mathscr K_\eev)\times\mathrm{Sp}_\mathrm{res}(\mathscr K_\ood).
\]

It is known \cite{segal} that to  realize the spin representation of $\mathrm O_\mathrm{res}(\mathscr K_\eev)$ 
or the metaplectic representation of $\mathrm{Sp}_\mathrm{res}(\mathscr K_\ood)$ one needs to pass to 
certain central extensions 
$\widehat{\mathrm{O}}_\mathrm{res}(\mathscr K_\eev)$ and $\widehat{\mathrm{Sp}}_\mathrm{res}(\mathscr K_\ood)$
which are also Banach--Lie groups \cite[Sec.~9]{neebconflu}. This leads to a Banach--Lie supergroup 
$(\widehat{\mathrm{OSp}}_\mathrm{res}(\mathscr K),\widehat{\g{osp}}_\mathrm{res}(\mathscr K))$ where
the Banach--Lie superalgebra $\widehat{\g{osp}}_\mathrm{res}(\mathscr K)$ is the central extension of
$\g{osp}_\mathrm{res}(\mathscr K)$
corresponding to the cocycle
\[
\omega:\g{osp}_\mathrm{res}(\mathscr K)\times\g{osp}_\mathrm{res}(\mathscr K)\to\R
\]
which can be uniquely identified by the following properties.
\begin{itemize}
\item[(i)] If $A,B\in\g{osp}_\mathrm{res}(\mathscr K)$ have different parity then $\omega(A,B)=0$.
\item[(ii)] If $A,B\in\g{osp}_\mathrm{res}(\mathscr K)_\eev$ then $\omega(A,B)=-\frac{1}{2}\,\tr(J_+A_\mathrm{conj}B_\mathrm{conj})$.
\item[(iii)] If $A,B\in\g {osp}_\mathrm{res}(\mathscr K)_\ood$ then $\omega(A,B)=-\frac{1}{2}\,\tr(J_-A_\mathrm{conj}B_\mathrm{conj})$.
\end{itemize}
In other words we have $\widehat{\g{osp}}_\mathrm{res}(\mathscr K)=\g{osp}_\mathrm{res}(\mathscr K)\oplus\R$ as a vector space, with the superbracket
\[
[(T,z),(T',z')]=([T,T'],\omega(T,T')).
\]

We now describe the Fock space realization of the metaplectic representation of $\widehat{\g{osp}}_\mathrm{res}(\mathscr K)$.
We choose an orthonormal basis $\{f_1,f_2,f_3,\ldots\}$ for the fermionic space $\mathscr K_\eev$ and 
an orthonormal basis 
$\{b_1,b_2,b_3,\ldots\}$ for the bosonic space $\mathscr K_\ood$.
For every two integers $k,l\geq 0$, we define 
$\mathscr F^{k,l}$ to be the complex vector space spanned by monomials
\begin{equation}
\label{standardm}
f_1^{r_1}f_2^{r_2}f_3^{r_3}\cdots b_1^{s_1}b_2^{s_2}b_3^{s_3}\cdots
\end{equation}
with the following properties.
\begin{itemize}
\item[(i)] For every positive integer $m$, we have $r_m\in\{0,1\}$ and $s_m\in\{0,1,2,3,\ldots\}$.
\item[(ii)] For all but finitely many $m$, we have $r_m=s_m=0$.
\item[(iii)] $\sum_{m=1}^\infty r_m=k$ and $\sum_{m=1}^\infty s_m=l$.
\end{itemize}
We will refer to the monomials satisfying the above properties as \emph{reduced} monomials.
To simplify the notation, we will also use more general monomials of the form 
$
v_1\cdots v_m
$
where $v_k\in\mathscr K_\eev\cup\mathscr K_\ood$ for every $1\leq k\leq n$. Observe that any such monomial can be expressed as a linear combination of reduced monomials  using linearity and the relations 
\[
f_mf_n=-f_nf_m\, \text{ and }\, b_mb_n=b_nb_m
\]
for every two nonnegative integers $m,n$.

We set $\mathscr F=\bigoplus_{k,l\geq 0}\mathscr F^{k,l}$ and define an inner product 
$\langle\cdot,\cdot\rangle_\mathscr F$ on $\mathscr F$
as follows. If $v=f_1^{r_1}f_2^{r_2}\cdots b_1^{s_1}b_2^{s_2}\cdots\in\mathscr F^{k,l}$ and 
$w=f_1^{r'_1}f_2^{r'_2}\cdots b_1^{s'_1}b_2^{s'_2}\cdots\in\mathscr F^{k',l'}$ then 
\[
\langle v,w\rangle_\mathscr F
=
\begin{cases}
1 &\text{ if }r_k=r_k' \text{ and } s_k=s_k'\text{ for every }k\geq 1,\\
0 &\text{ otherwise.}
\end{cases}
\]
Next we describe the action of $\widehat{\g{osp}}_\mathrm{res}(\mathscr K)$ on $\mathscr F$. Let $(T,z)\in\widehat{\g{osp}}_\mathrm{res}(\mathscr K)$
where $T$ is 
a homogeneous element
expressed in the form
$T=T_\mathrm{lin}+T_\mathrm{conj}$. For every $v\in\mathscr F$ we set
\[
\rho^\mathscr F\big((T,z)\big)v=\rho^\mathscr F(T_\mathrm{lin})v+\rho^\mathscr F(T_\mathrm{conj})v+
\sqrt{-1}z\cdot v
\]  
where
$\rho^\mathscr F(T_\mathrm{lin})$ and $\rho^\mathscr F(T_\mathrm{conj})$ 
are defined as follows. If $v=v_1\cdots v_m$ is a monomial then
\[
\rho^\mathscr F(T_\mathrm{lin})v=\sum_{r=1}^m(-1)^{p(T_\mathrm{lin})\big(p(v_1)+\cdots+p(v_{r-1})\big)}
v_1\cdots v_{r-1}(T_\mathrm{lin}v_r)v_{r+1}\cdots v_{m}.
\]
We also define $\rho^\mathscr F(T_\mathrm{conj})v$ by
\[
\rho^\mathscr F(T_\mathrm{conj})v=
a(T_\mathrm{conj})v-a(T_\mathrm{conj})^\dag v
\]
where $a(T_\mathrm{conj}):\mathscr F\to\mathscr F$ and $a(T_\mathrm{conj})^\dag:\mathscr F\to\mathscr F$ 
are linear maps defined as follows. If $v\in\mathscr F^{k,l}$ then
\[
a(T_\mathrm{conj})v=\lambda_{k,l}
\Big(
\sqrt{-1}\sum_{r=1}^\infty (T_\mathrm{proj}b_r)b_r+\sum_{r=1}^\infty (T_\mathrm{proj}f_r)f_r
\Big)v
\]
where $\lambda_{k,l}=\frac{1}{2}\sqrt{(k+l+1)(k+l+2)}$.
Moreover, $a(T_\mathrm{conj})^\dag$ is the superadjoint of $a(T_\mathrm{conj})$ on $\mathscr F$, i.e.,
\[
a(T_\mathrm{conj})^\dag=
\begin{cases}
a(T_\mathrm{conj})^*&\text{ if }T\text{ is even}\\
-\sqrt{-1}a(T_\mathrm{conj})^*&\text{ if }T\text{ is odd}\\
\end{cases}
\]
where $a(T_\mathrm{conj})^*$ is the adjoint of $a(T_\mathrm{conj})$ on $\mathscr F$, which is defined by 
\[
\langle\, a(T_\mathrm{conj})^*w\,,\,w'\,\rangle_\mathscr F
=
\langle\, w\,,\,a(T_\mathrm{conj})w'\,\rangle_\mathscr F\,\text{ for every }w,w'\in\mathscr F.
\]
The restriction of the above action to $\widehat{\g{osp}}_\mathrm{res}(\mathscr K)_\eev$ is the tensor product of the spin representation of 
$\widehat{\g{o}}_\mathrm{res}(\mathscr K_\eev)$ and the metaplectic representation of $\widehat{\g{sp}}_\mathrm{res}(\mathscr K_\ood)$.
This representation of $\widehat{\g{osp}}_\mathrm{res}(\mathscr K)_\eev$ integrates to an
analytic unitary representation $(\pi,\mathscr H)$ of $\widehat{\mathrm{OSp}}_\mathrm{res}(\mathscr K)$ on the completion of $\mathscr F$.  
For every $(T,z)\in\widehat{\g{osp}}_\mathrm{res}(\mathscr K)$, the space $\mathscr F$ consists of analytic vectors for the operator $\rho^\mathscr F\big((T,z)\big)$ 
\cite[Sec. 9]{neebconflu}.  From Lemma \ref{lem-criteriaforesa} 
it follows that $(\pi,\mathscr H,\rho^\mathscr F,\mathscr F)$
is a pre-representation of 
$(\widehat{\mathrm{OSp}}_\mathrm{res}(\mathscr K),\widehat{\g{osp}}_\mathrm{res}(\mathscr K))$. 
Consequently, Theorem \ref{thm-stabili} implies the following result.
\begin{theorem}
Let $\overline{\mathscr F}$ be the Hilbert space completion of the Fock space $\mathscr F$ defined above. Then
there exists a unique analytic unitary representation 
$(\sigma,\rho^\sigma,\overline{\mathscr F})$
of 
$(\widehat{\mathrm{OSp}}_\mathrm{res}(\mathscr K),\widehat{\g{osp}}_\mathrm{res}(\mathscr K))$ 
with the following properties.
\begin{enumerate}
\item[(i)] $\mathscr F\sseq \overline{\mathscr F}^\omega$, i.e., every $v\in\mathscr F$ is an analytic vector 
for $(\sigma,\overline{\mathscr F})$.
\item[(ii)] $\rho^\sigma(x)\big|_\mathscr F=\rho^\mathscr F(x)$ for every 
$x\in\widehat{\g{osp}}_\mathrm{res}(\mathscr K)$.
\end{enumerate}
\end{theorem}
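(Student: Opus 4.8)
The plan is to recognize the statement as a direct application of the Stability Theorem (Theorem \ref{thm-stabili}(ii)). Once we know that $\big(\pi,\overline{\mathscr F},\mathscr F,\rho^\mathscr F\big)$ is a pre-representation of $(\widehat{\mathrm{OSp}}_\mathrm{res}(\mathscr K),\widehat{\g{osp}}_\mathrm{res}(\mathscr K))$ and that $(\pi,\overline{\mathscr F})$ is an analytic unitary representation of $\widehat{\mathrm{OSp}}_\mathrm{res}(\mathscr K)$, Theorem \ref{thm-stabili}(ii) produces a unique analytic unitary representation $(\sigma,\rho^\sigma,\overline{\mathscr F})$ of $(\widehat{\mathrm{OSp}}_\mathrm{res}(\mathscr K),\widehat{\g{osp}}_\mathrm{res}(\mathscr K))$ with $\overline{\rho^\sigma(x)}\big|_{\mathscr F}=\rho^\mathscr F(x)$. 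Property (i) of the statement is then exactly $\mathscr F=\mathscr B\sseq\overline{\mathscr F}^\omega$, property (ii) is the identity $\rho^\sigma(x)\big|_{\mathscr F}=\rho^\mathscr F(x)$, and the coincidence of the underlying unitary representations of the group is automatic. So the whole content is the verification of the axioms of Definition \ref{definition-of-pseudo}.

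I would check those axioms one at a time. Axioms (i) and (ii) are essentially in place: $\mathscr F$ is a dense, $\mathbb Z_2$-graded subspace of $\overline{\mathscr F}$, and the growth estimates of \cite[Sec. 9]{neebconflu} show that every reduced monomial is an analytic vector for $\dd\pi(x)$ with $x\in\widehat{\g{osp}}_\mathrm{res}(\mathscr K)_\eev$, so $\mathscr F\sseq\overline{\mathscr F}^\omega$ and $(\pi,\overline{\mathscr F})$ is analytic. Axiom (iv) for even $x$ follows from the construction of $\pi$ together with Lemma \ref{lem-criteriaforesa}(b), since $\rho^\mathscr F(x)\mathscr F\sseq\mathscr F$ and $\mathscr F$ consists of analytic vectors. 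Axiom (v), symmetry of $e^{-\frac{\pi i}{4}}\rho^\mathscr F(x)$ for odd $x$, I would verify directly on monomials from the explicit formulas: the $T_\mathrm{lin}$-part acts by a super-derivation whose adjoint brings in $T_\mathrm{lin}^*$, the operator $a(T_\mathrm{conj})^\dag$ is defined precisely as the relevant super-adjoint of $a(T_\mathrm{conj})$, and the skew-symmetry relation $(Tv,w)+(-1)^{p(T)p(v)}(v,Tw)=0$ together with the central term $\sqrt{-1}z$ make the phases match. Axiom (iii), that $\rho^\mathscr F$ is a homomorphism of Banach--Lie superalgebras, is the computational heart: the super-commutators of the "quadratic" operators $a(T_\mathrm{conj})\pm a(T_\mathrm{conj})^\dag$ produce exactly the scalar $\omega(T,T')$, matching the bracket $[(T,z),(T',z')]=([T,T'],\omega(T,T'))$ of the central extension; this is the super-analogue of the standard CAR/CCR computation and is the part deferred to \cite{neebconflu}. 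Finally, for axiom (vi) one must exhibit, for each coset of $\widehat{\mathrm{OSp}}_\mathrm{res}(\mathscr K)/\widehat{\mathrm{OSp}}_\mathrm{res}(\mathscr K)^\circ$, a representative $g$ with $\pi(g)^{-1}\mathscr F\sseq\mathscr F$ and $\pi(g)\rho^\mathscr F(x)\pi(g)^{-1}=\rho^\mathscr F(\Ad(g)x)$ for odd $x$; since the symplectic factor and the central extensions are connected, only the components of $\mathrm O_\mathrm{res}(\mathscr K_\eev)$ contribute, and for a representative acting as a reflection on the basis $\{f_m\}$ the two sides are compared monomial-wise (alternatively one reduces to finitely many representatives after applying Proposition \ref{proposition-conjugacy} on $\widehat{\mathrm{OSp}}_\mathrm{res}(\mathscr K)^\circ$).

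The main obstacle is axiom (iii): showing that the explicitly defined $\rho^\mathscr F$ respects the super-bracket of $\widehat{\g{osp}}_\mathrm{res}(\mathscr K)$, with the cocycle $\omega$ emerging from the traces $\tr(J_\pm A_\mathrm{conj}B_\mathrm{conj})$ in the commutators of the operators $a(T_\mathrm{conj})\pm a(T_\mathrm{conj})^\dag$ — a calculation requiring care about signs and the even/odd superadjoint conventions, and relying essentially on the Hilbert--Schmidt hypothesis to keep those traces finite. A secondary technical input is the analytic-vector estimate for the monomials, which underlies both the analyticity of $(\pi,\overline{\mathscr F})$ and the essential self-adjointness used for axiom (iv). Once these two inputs are granted (both supplied by \cite[Sec. 9]{neebconflu}), the theorem follows formally from Theorem \ref{thm-stabili}(ii).
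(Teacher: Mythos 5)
Your proposal follows exactly the route the paper takes: verify that $\big(\pi,\overline{\mathscr F},\mathscr F,\rho^{\mathscr F}\big)$ is a pre-representation (the paper does this by citing the analytic-vector estimates of \cite[Sec. 9]{neebconflu} and invoking Lemma \ref{lem-criteriaforesa}) and then apply Theorem \ref{thm-stabili}. You supply more detail on the individual axioms, in particular on (iii) and (vi), than the paper — which defers these checks to the cited reference and to the remark following Definition \ref{definition-of-pseudo} — but the underlying argument is the same.
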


\appendix

\section{A smooth non-analytic unitary representation}

\label{exampleanalytic}

The goal of this appendix is to give two examples: a smooth unitary representation of a Banach--Lie group without nonzero analytic vectors, and an analytic unitary representation of a Banach--Lie group without nonzero bounded vectors.

We start with the first example. In this example the Hilbert space of the representation is 
$\cH = L^2([0,1],\C)$ and $G$ is the additive group 
of a Banach space $\g g$ of measurable functions $[0,1] \to \R$ 
with the property that 
\[ L^\infty([0,1],\R)\subeq \g g\subeq \bigcap_{p \in \N} 
L^p([0,1],\R).  \] 
Using results  from \cite{khdifferentiable}, these two inclusions 
easily imply that the representation $(\pi,\mathscr H)$
of $G$  given by 
$\big(\pi(f)\xi\big)(x) = e^{if(x)}\xi(x)$  is smooth. 
Now the main point is to choose $\g g$ large enough so that the 
space $\cH^\omega$  does not contain nonzero vectors. 

Let  $g(x)=e^{-\sqrt{x}}$ for $x\geq 0$. Then $\int_0^\infty x^ng(x)dx=2\cdot\Gamma(2n+2)<\infty$ 
for every $n\in\N$ while $\int_0^\infty e^{tx} g(x)dx=\infty$ for every $t>0$. 
Consider the map
\[
G:[0,\infty)\to[0,\infty)\ ,\ G(x)=\frac{1}{2}\int_0^x g(t)dt.
\]
As $g$ is continuous, the function $G$ is $C^1$ with 
$G'(x) > 0$ for every $x \geq 0$. Next observe that 
\[ \lim_{x \to \infty} G(x) = 
\frac{1}{2} \int_0^\infty e^{-\sqrt t}\, dt 
= \int_0^\infty e^{-x}x\, dx 
= \Gamma(2) = 1.\] 
Therefore $G\: [0,\infty) \to [0,1)$ is a $C^1$-diffeomorphism. 
Set
\begin{equation}
  \label{eq:h}
h(x) = 
\begin{cases} 
G^{-1}(1-x)&\text{if  $0 < x \leq 1$},\\ 
0&\text{if $x=0$.}
\end{cases}
\end{equation}
Then $h : [0,1] \to [0,\infty)$ is a Lebesgue measurable function 
with a singularity at~$0$.

In the following we denote the Lebesgue measure of a 
measurable set $E\sseq [0,1]$ by $\mu(E)$. 
We say that the {\it metric density of $E$ at $x_0$ is $1$} 
if $\lim_{\eps \to 0} \frac{\mu(E \cap [x_0-\eps, x_0+\eps])}{2\eps} = 1$. 
According to \cite[\S 7.12]{Ru86} at almost every point of $E$ 
the metric density of $E$ is $1$. Clearly, $0$ and $1$ can never 
have this property for $E \subeq [0,1]$. Note that
at every point $x_0$ of metric density $1$, we also have 
\[ \lim_{\eps \to 0} \frac{\mu(E \cap [x_0, x_0+\eps])}{\eps} = 1.\]

\begin{lemma}
\label{seriesfub}
If $\{a_n\}_{n=0}^\infty$ and $\{s_n\}_{n=0}^\infty$ are 
sequences of non-negative real numbers such that $\{a_n\}_{n=0}^\infty$ is decreasing and $a_n \to 0$, then 
\[ \sum_{n = 0}^\infty (a_n - a_{n+1}) (s_0 + \cdots + s_n)
=  \sum_{n = 0}^\infty a_n s_n.\]    
\end{lemma}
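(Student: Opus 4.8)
The plan is to reduce the identity to an interchange of summation in a double series of non-negative terms. Since $\{a_n\}_{n=0}^\infty$ is decreasing and $a_n\to 0$, the telescoping relation $\sum_{k=n}^N (a_k-a_{k+1}) = a_n - a_{N+1}$ converges to $a_n$ as $N\to\infty$, so
\[
a_n = \sum_{k=n}^\infty (a_k - a_{k+1})\qquad\text{for every }n\geq 0.
\]
Substituting this into the right-hand side of the claimed identity yields
\[
\sum_{n=0}^\infty a_n s_n \;=\; \sum_{n=0}^\infty s_n\sum_{k=n}^\infty (a_k-a_{k+1})
\;=\; \sum_{n=0}^\infty\sum_{k=n}^\infty (a_k-a_{k+1})\,s_n.
\]
Every summand $(a_k-a_{k+1})\,s_n$ is non-negative, because $\{a_n\}$ is decreasing and the $s_n$ are non-negative. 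Hence Tonelli's theorem (Fubini's theorem for non-negative double series) permits summing over $k$ and $n$ in either order, with the common value lying in $[0,+\infty]$. Interchanging the order of summation gives
\[
\sum_{n=0}^\infty\sum_{k=n}^\infty (a_k-a_{k+1})\,s_n
\;=\; \sum_{k=0}^\infty (a_k-a_{k+1})\sum_{n=0}^k s_n
\;=\; \sum_{k=0}^\infty (a_k-a_{k+1})(s_0+\cdots+s_k),
\]
which is precisely the left-hand side; in particular the two sides are finite, or infinite, simultaneously.

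The argument presents no real obstacle: the only point requiring (minor) care is that the series may a priori diverge, which is exactly why one invokes Tonelli's theorem for non-negative terms rather than Fubini's theorem for absolutely convergent series. Alternatively, one can avoid measure-theoretic language and argue by summation by parts: writing $S_n = s_0+\cdots+s_n$, a swap of finite sums gives $\sum_{n=0}^N (a_n-a_{n+1})S_n = \sum_{n=0}^N a_n s_n - a_{N+1}S_N$, and one then checks, splitting off a tail of $\sum_n a_n s_n$, that $a_{N+1}S_N\to 0$ when $\sum_n a_n s_n<\infty$, while both sides tend to $+\infty$ together in the remaining case. Either route completes the proof.
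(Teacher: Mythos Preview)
Your proof is correct and follows essentially the same approach as the paper: both set up the double array $(a_k-a_{k+1})s_n$ for $k\geq n$, observe that all terms are non-negative, and interchange the order of summation (the paper cites \cite[Cor.~1.27]{Ru86}, which is exactly the Tonelli-type result you invoke). Your write-up is in fact more explicit than the paper's three-line sketch, and the added summation-by-parts alternative is a nice touch.
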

\begin{proof}
For every two non-negative integers $p,q$ set 
\[
b_{p,q}=
\begin{cases}
(a_p-a_{p+1})s_q&\text{if }p\geq q,\\
0&\text{otherwise.}
\end{cases}
\]
 By \cite[Cor. 1.27]{Ru86} 
we have
$
\sum_{p=0}^\infty\sum_{q=0}^\infty b_{p,q}=\sum_{q=0}^\infty\sum_{p=0}^\infty b_{p,q}
$. 
The lemma follows easily from the latter equality.
\end{proof}

\begin{lemma} \label{lem:lebes} 
Let $H : (0,1] \to \R$ be a continuous and decreasing map
such that $\int_0^1 H(x)\, dx = \infty$ and 
$E \subeq [0,1]$ be a measurable set 
such that $\lim_{\eps \to 0} \frac{\mu(E \cap [0,\eps])}{\eps} = 1$.
Then $\int_E e^{tH(x)}\, dx = \infty$ for every $t > 0$. 
\end{lemma}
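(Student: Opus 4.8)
The plan is to localize the integral near the singularity at $0$ and reduce to a dyadic estimate that uses only the monotonicity of $e^{tH}$. First I would record the reduction step. Since $H$ is decreasing and $\int_0^1 H(x)\,dx=\infty$, the function $H$ cannot be bounded near $0$: a bounded decreasing function on some interval $(0,\delta)$ is integrable there, while $\int_\delta^1 H(x)\,dx<\infty$ by continuity of $H$ on the compact interval $[\delta,1]$. Hence $H(x)\to+\infty$ as $x\to0^+$, and $\int_0^{\eps_0}H(x)\,dx=\infty$ for every $\eps_0\in(0,1]$. Fix $t>0$. Using the metric density hypothesis with the constant $\tfrac34<1$, choose $\eps_0\in(0,1]$ small enough that $H>0$ on $(0,\eps_0]$ and $\mu\big(E\cap[0,\eps]\big)\ge\tfrac34\,\eps$ for all $\eps\in(0,\eps_0]$. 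Since $e^{y}\ge 1+y\ge y$ for $y\ge 0$, we get $\int_0^{\eps_0}e^{tH(x)}\,dx\ge t\int_0^{\eps_0}H(x)\,dx=\infty$, so it suffices to prove that $\int_{E\cap[0,\eps_0]}e^{tH(x)}\,dx=\infty$, because $\int_E e^{tH}\,dx\ge\int_{E\cap[0,\eps_0]}e^{tH}\,dx$.

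Next I would carry out the dyadic estimate. Write $F=e^{tH}$, a positive continuous decreasing function on $(0,1]$, and put $\eps_k=\eps_0/2^k$, so that $(0,\eps_0]=\bigsqcup_{k\ge0}(\eps_{k+1},\eps_k]$. For each $k$ we have
\[
\mu\big(E\cap(\eps_{k+1},\eps_k]\big)=\mu\big(E\cap[0,\eps_k]\big)-\mu\big(E\cap[0,\eps_{k+1}]\big)\ge\tfrac34\eps_k-\eps_{k+1}=\tfrac14\eps_k ,
\]
and $F\ge F(\eps_k)$ on $(\eps_{k+1},\eps_k]$, so by countable additivity
\[
\int_{E\cap[0,\eps_0]}F\,dx\ \ge\ \sum_{k\ge0}\int_{E\cap(\eps_{k+1},\eps_k]}F\,dx\ \ge\ \tfrac14\sum_{k\ge0}F(\eps_k)\,\eps_k .
\]
On the other hand, $F\le F(\eps_{k+1})$ on $(\eps_{k+1},\eps_k]$ and $\eps_k-\eps_{k+1}=\eps_{k+1}$, hence
\[
\int_0^{\eps_0}F\,dx\ =\ \sum_{k\ge0}\int_{(\eps_{k+1},\eps_k]}F\,dx\ \le\ \sum_{k\ge0}F(\eps_{k+1})\,\eps_{k+1}\ =\ \sum_{j\ge1}F(\eps_j)\,\eps_j\ \le\ \sum_{k\ge0}F(\eps_k)\,\eps_k .
\]
Combining the last two displays gives $\int_{E\cap[0,\eps_0]}F\,dx\ge\tfrac14\int_0^{\eps_0}F\,dx=\infty$, which finishes the argument.

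The main obstacle — and the only place where any care is needed — is the inequality $\mu\big(E\cap(\eps_{k+1},\eps_k]\big)\ge\tfrac14\eps_k$. It breaks down if one only knows the density of $E$ at $0$ to be at least $\tfrac12$, since then the guaranteed mass of $E$ inside $[0,\eps_{k+1}]$ could absorb all of the guaranteed mass of $E$ inside $[0,\eps_k]$; this is precisely why the hypothesis insists on density $1$ (any constant strictly larger than $\tfrac12$ would do, with $\eps_0$ chosen after fixing it). One could instead dispose of this subtraction by a summation-by-parts via Lemma~\ref{seriesfub}, but the dyadic splitting above keeps the proof self-contained. Everything else — positivity and monotonicity of $F$, continuity of $H$ on compact subintervals of $(0,1]$, and countable additivity — is routine.
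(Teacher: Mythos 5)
Your proof is correct, and it takes a genuinely different route from the paper's. The paper picks scales adapted to the level sets of $H$, namely $\eps_n = H^{-1}(2^n)$, and then invokes the Abel-summation Lemma~\ref{seriesfub} twice: once to convert $\infty = \int_0^1 H$ into the divergence of $\sum \eps_k 2^k$, and a second time to estimate $\int_E H$ from below by $\sum 2^{n-1}\mu(E_n)$; the conclusion for $e^{tH}$ is then left implicit (one applies the whole argument with $e^{tH}$ in place of $H$, using that $e^{tH}$ is again decreasing, continuous, and non-integrable). You instead work directly with $F=e^{tH}$ and use the \emph{geometric} dyadic scales $\eps_k = \eps_0/2^k$, which are independent of $H$. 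The payoff is a purely elementary sandwich: the density-$1$ hypothesis gives a pointwise lower bound $\mu\bigl(E\cap(\eps_{k+1},\eps_k]\bigr)\ge\tfrac14\eps_k$ on each dyadic annulus, the monotonicity of $F$ gives matching Riemann-type upper and lower sums $\sum F(\eps_k)\eps_k$, and the two combine to $\int_{E\cap[0,\eps_0]}F \ge \tfrac14\int_0^{\eps_0}F = \infty$ with no summation-by-parts lemma needed. You also make the final reduction $\int_0^{\eps_0}e^{tH}\ge t\int_0^{\eps_0}H=\infty$ explicit, whereas the paper leaves that step to the reader. Your aside about the density threshold (any density $>\tfrac12$ suffices with geometric halving) is accurate and is a small piece of insight the paper's level-set decomposition does not make visible, since there the gap $\eps_n-\eps_{n+1}$ has no fixed ratio to $\eps_n$.
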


\begin{proof} Our assumption implies that
$\lim_{x \to 0} H(x) = \infty$ because otherwise $H$ would be bounded, 
hence integrable. Adding a constant to $H$ will not affect the statement of the lemma. Therefore 
we may assume 
that $H(1) = 1$. 
We now put $\eps_n := H^{-1}(2^n)$ and note that 
$\eps_0 = 1$ as well as $\eps_n \to 0$. 
We now find that 
\[ \infty = \int_0^1 H(x)\, dx 
= \sum_{n = 0}^\infty \int_{\eps_{n+1}}^{\eps_n} H(x)\, dx 
\leq \sum_{n = 0}^\infty 2^{n+1} (\eps_n - \eps_{n+1}).\] 
Lemma \ref{seriesfub} implies that  
\begin{equation}
  \label{eq:inf0} 
\sum_{k = 0}^\infty \eps_k 2^k 
= \sum_{n = 0}^\infty (2^{n+1}-1) (\eps_n - \eps_{n+1}) = \infty 
\end{equation}
because $\sum_{n = 0}^\infty \eps_n - \eps_{n+1} 
=  \eps_0$. 

If $E_n = E \cap [0,\eps_n]$ for every $n\geq 0$, then using Lemma \ref{seriesfub} we have
\begin{align*}
\int_{E} H(x)\, dx 
&= \sum_{n=0}^\infty \int_{E_n\setminus E_{n+1}} H(x)\, dx 
\geq \sum_{n=0}^\infty 2^n(\mu(E_n) - \mu(E_{n+1}))\\
&=  \sum_{n=0}^\infty (1 + 1 + 2^1 + \cdots + 2^{n-1})(\mu(E_n) - \mu(E_{n+1}))\\
&=  \mu(E_0) + \sum_{n=1}^\infty 2^{n-1} \mu(E_n).
\end{align*}
Since
$\lim_{n \to \infty} \frac{\mu(E_n)}{\eps_n} =1$, from 
\eqref{eq:inf0} it follows that  $\int_E H(x)\, dx = \infty$. 
\end{proof}

\begin{lemma} \label{lem:1.4} 
The map $h$ given in \eqref{eq:h} has the following properties: 
\begin{enumerate}
\item[(i)] $h \in \bigcap_{p \in \N} L^p([0,1],\R)$.    
\item[(ii)] $h\big|_{(0,1]}$ is strictly decreasing with 
$\lim_{x \to 0} h(x) = \infty$. 
\item[(iii)] If $E \subeq [0,1]$ is a measurable subset 
satisfying $\lim_{\eps \to 0} \frac{\mu(E \cap [0,\eps])}{\eps} = 1$, 
then $\int_E e^{th(x)}\, dx = \infty$ for every $t > 0$. 
\end{enumerate}
\end{lemma}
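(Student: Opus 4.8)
The plan is to dispatch the two ``soft'' assertions (ii) and (i) by the explicit change of variables $x=1-G(u)$, and then to derive the essential statement (iii) from the estimate contained in the proof of Lemma~\ref{lem:lebes}, applied not to $h$ itself but to a suitable composite of $h$.

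For (ii): since $g$ is continuous and strictly positive, $G$ is $C^1$ with $G'=\frac12 g>0$, hence strictly increasing; together with $G(0)=0$ and $\lim_{x\to\infty}G(x)=1$ this makes $G$ a $C^1$-diffeomorphism of $[0,\infty)$ onto $[0,1)$, so $G^{-1}\colon[0,1)\to[0,\infty)$ is a strictly increasing continuous bijection. Therefore $h=G^{-1}\circ(1-\,\cdot\,)$ is continuous and strictly decreasing on $(0,1]$, and $h(x)=G^{-1}(1-x)\to\infty$ as $x\to0^+$ because $1=\sup G([0,\infty))$. For (i): substituting $x=1-G(u)$, equivalently $u=h(x)$, gives $dx=-\frac12 g(u)\,du$, with $x=0,1$ corresponding to $u=\infty,0$; hence $\int_0^1 h(x)^p\,dx=\frac12\int_0^\infty u^p g(u)\,du=\frac12\int_0^\infty u^p e^{-\sqrt u}\,du$, which equals $\Gamma(2p+2)$ and is finite for every $p\in\N$ by the integral computation recorded above. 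Since $h\ge 0$ is measurable, this yields $h\in\bigcap_{p\in\N}L^p([0,1],\R)$.

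Part (iii) is the heart of the matter. The same substitution shows that for every $t>0$,
\[
\int_0^1 e^{th(x)}\,dx=\frac12\int_0^\infty e^{tu-\sqrt u}\,du=\infty ,
\]
again by the computation recorded above, so $e^{th}\notin L^1([0,1],\R)$ in sharp contrast with (i). The trap here is that one cannot quote Lemma~\ref{lem:lebes} with $H=h$: part (i) gives $\int_0^1 h<\infty$, so the hypothesis $\int_0^1 H=\infty$ is violated for $h$. The remedy is to apply the argument of Lemma~\ref{lem:lebes} to $H:=e^{th}$ instead, which by (ii) is continuous and strictly decreasing on $(0,1]$ and which by the display above satisfies $\int_0^1 H=\infty$; since $E$ has metric density $1$ at $0$ by hypothesis, the estimate established in the proof of Lemma~\ref{lem:lebes} (the dyadic decomposition $\eps_n=H^{-1}(2^n)$, Lemma~\ref{seriesfub}, and $\lim_n\mu(E\cap[0,\eps_n])/\eps_n=1$) produces $\int_E H(x)\,dx=\infty$, which is precisely $\int_E e^{th(x)}\,dx=\infty$. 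I expect this circularity obstruction to be the main point: once (i) is known, Lemma~\ref{lem:lebes} must be fed the non-integrable composite $e^{th}$ rather than $h$; a minor companion remark is that $h$, hence $e^{th}$, is bounded on each $[\delta,1]$, so the divergence genuinely originates at the singularity $x=0$ and survives the passage from $(0,1]$ to $E$.
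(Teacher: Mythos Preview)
Your proof is correct and follows the paper's approach: the substitution $x=1-G(u)$ handles (i) and (ii), and for (iii) you compute $\int_0^1 e^{th}=\infty$ and then invoke Lemma~\ref{lem:lebes} with $H=e^{th}$, using the intermediate conclusion $\int_E H=\infty$ established in its proof. Your explicit observation that $H=h$ violates the hypothesis $\int_0^1 H=\infty$ (by part~(i)) and that one must instead feed $H=e^{th}$ into Lemma~\ref{lem:lebes} is a worthwhile clarification of the paper's terse ``Lemma~\ref{lem:lebes} completes the argument.''
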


\begin{proof}
(i) For every $n\in\N$ we have
\begin{align*}
\int_0^1 h(x)^n\, dx = \int_0^1 h(1-x)^n\, dx 
&=  \int_0^\infty h(1-G(y))^n|G'(y)|\, dy\\
&=  \frac{1}{2} \int_0^\infty y^n g(y)\, dy < \infty.
\end{align*}

(ii) Follows from the definition of $h$.

(iii) For every $t > 0$ we have
\begin{align*}
\int_0^1 e^{t h(x)} \, dx = \int_0^1 e^{t h(1-x)} \, dx 
&=  \int_0^\infty e^{th(1-G(y))}|G'(y)|\, dy\\
&=  \frac{1}{2} \int_0^\infty e^{ty}g(y)\, dy = \infty.
\end{align*}
Lemma \ref{lem:lebes} completes the argument.
\qedhere
\end{proof}

Let $\|\cdot\|_p$ denote the usual norm of $L^p([0,1],\R)$.
Set $c_n = \|h\|_n$ for every $n \in \N$. Note that $c_n > 0$ for 
every $n \in \N$. Since $h$ is unbounded, for every
 $c > 0$ the set 
\[
I_c = \{ x \in [0,1]\, :\, |h(x)| \geq c\}
\]
has positive measure. 
This implies that 
$\|h\|_n \geq \sqrt[n]{\mu(I_c)} c$. Since the right hand side converges 
to $c$ when $n\to\infty$, it follows that  $\lim_{n\to\infty} \|h\|_n = \infty$. 

We are now ready to define the Banach space $\g g$.
For every
measurable function $f : [0,1] \to \R$  we define a norm
\[ \|f\| = \sup \Big\{ \frac{\|f\|_n}{c_n} \,:\, n \in \N\Big\}, \]
and set \[ 
\g g= \Big\{ f \in \bigcap_{p \in \N} L^p([0,1],\R) \,:\, 
\|f\| < \infty \Big\}. \] 
It is fairly straightforward to check that $\g g$ is a Banach space and 
$L^\infty([0,1],\R) \subeq \g g$. We set $G=\g g$, i.e., the additive group of the Banach space $\g g$.

By construction, $h \in \g g$. Next we observe that 
we may also identify $\g g$ with a space of $1$-periodic functions on 
$\R$. Then the norm defined on $\g g$ is translation invariant. 
Therefore $\g g$ also contains the functions 
$h_{x_0}(\cdot) = \tilde h(\cdot - x_0)\big|_{[0,1]}$, where $\tilde h$ is the 
$1$-periodic extension of $h$ to $\R$.  
For $x_0 < 1$ and $0 < \eps < 1 - x_0$, it satisfies 
\begin{equation}\label{eq:inf2} 
\int_{x_0}^{x_0+ \eps} e^{t h_{x_0}(x)}\, dx = \infty 
\quad \mbox{ for } \quad t > 0
\end{equation} 
by Lemma~\ref{lem:1.4}(iii).

\begin{theorem} \label{thm:1.9} Let $\cH = L^2([0,1],\C)$, $G=\g g$ be as above, and 
$(\pi,\mathscr H)$ be the unitary representation of
$G$ defined by 
$\big(\pi(f)\xi\big)(x) = e^{if(x)} \xi(x)$. Then $(\pi,\mathscr H)$ 
is smooth and $\cH^\omega = \{0\}$. 
\end{theorem}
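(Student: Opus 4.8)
The plan is to treat the two assertions separately; smoothness is routine, and the real content is $\mathscr H^\omega=\{0\}$. For smoothness, recall that the Stone generator $\dd\pi(v)$ is multiplication by $iv$ for $v\in\g g$. If $\xi\in L^\infty([0,1],\C)$ and $v_1,\dots,v_n\in\g g$, then each $v_j$ lies in every $L^p([0,1],\R)$, so by the generalized H\"older inequality $v_1\cdots v_n\in\bigcap_p L^p\sseq L^2$, and multiplying by the bounded $\xi$ keeps the product in $L^2$; thus $\xi\in\bigcap_n\mathcal D_n$, hence $\xi\in\mathscr H^\infty$ by \cite[Thm. 9.4]{khdifferentiable}. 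Since $L^\infty([0,1],\C)$ is dense in $\mathscr H=L^2([0,1],\C)$, the representation is smooth. (This is the point already sketched before the lemmas of this appendix.)

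For $\mathscr H^\omega=\{0\}$ I would argue by contradiction: assume $\xi\in\mathscr H^\omega$ with $\xi\neq 0$. Choose $\delta>0$ so that $E:=\{x\in[0,1]:|\xi(x)|>\delta\}$ has positive measure. By the Lebesgue density theorem \cite[\S 7.12]{Ru86}, almost every point of $E$ has metric density $1$ in $E$; fix such a point $x_0$, which must lie in $(0,1)$ since $0$ and $1$ cannot be density points of a subset of $[0,1]$. As observed in the text, $\lim_{\eps\to0}\mu(E\cap[x_0,x_0+\eps])/\eps=1$, so the translated set $\widehat{E}:=(E-x_0)\cap[0,1-x_0)\sseq[0,1]$ satisfies $\lim_{\eps\to0}\mu(\widehat{E}\cap[0,\eps])/\eps=1$. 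Lemma \ref{lem:1.4}(iii) then gives $\int_{\widehat{E}}e^{th(u)}\,du=\infty$ for every $t>0$; expanding the exponential and interchanging sum and integral (all terms being non-negative) yields $\sum_{n\ge0}\frac{t^n}{n!}\int_{\widehat{E}}h(u)^n\,du=\infty$ for every $t>0$.

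Next I would transfer this divergence to $\xi$. On $x_0+\widehat{E}$ we have $|\xi|>\delta$ and $h_{x_0}(x)=h(x-x_0)$, and since $\xi\in\mathscr H^\infty$ all the vectors $h_{x_0}^n\xi$ lie in $L^2$; by Cauchy--Schwarz and $\mu(\widehat{E})\le1$,
\[
\|h_{x_0}^{n}\xi\|_{2}^{2}\;\ge\;\delta^{2}\int_{\widehat{E}}h(u)^{2n}\,du\;\ge\;\delta^{2}\Big(\int_{\widehat{E}}h(u)^{n}\,du\Big)^{2},
\]
so $\sum_{n\ge0}\frac{t^n}{n!}\|h_{x_0}^{n}\xi\|_{2}\ge\delta\sum_{n\ge0}\frac{t^n}{n!}\int_{\widehat{E}}h(u)^{n}\,du=\infty$ for every $t>0$. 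On the other hand, $\xi\in\mathscr H^\omega$ means the orbit map $\pi^\xi:\g g\to\mathscr H$ is real analytic, so composing with the line $t\mapsto t\,h_{x_0}$ shows that $\xi$ is an analytic vector for the one-parameter unitary group $t\mapsto\pi(t\,h_{x_0})$, whose Stone generator $\dd\pi(h_{x_0})$ is multiplication by $ih_{x_0}$. By Proposition \ref{thm-oneparam}(ii), $\xi\in\mathcal D^\omega(\dd\pi(h_{x_0}))$, i.e.\ there is $t_0>0$ with $\sum_{n\ge0}\frac{t_0^n}{n!}\|h_{x_0}^{n}\xi\|_{2}<\infty$, contradicting the previous estimate. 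Hence $\mathscr H^\omega=\{0\}$.

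The main obstacle is exactly the one the appendix is built around: exhibiting a single direction $f\in\g g$ for which the generator $\dd\pi(f)$ admits no nonzero analytic vector. The function $h$ — lying in every $L^p$ yet with $\int_E e^{th}\,dx=\infty$ on every set $E$ of metric density $1$ (Lemma \ref{lem:1.4}) — is engineered precisely for this, and the Lebesgue density theorem is what allows one to slide its singularity to a point where an arbitrary nonzero $\xi$ stays bounded away from $0$. Everything else is bookkeeping with the interchange of sum and integral and with Cauchy--Schwarz.
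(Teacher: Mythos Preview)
Your proof is correct and follows essentially the same route as the paper: smoothness via $L^\infty\subseteq\mathscr H^\infty$, and for $\mathscr H^\omega=\{0\}$ the contradiction via a Lebesgue density point $x_0$ of $E=\{|\xi|>\delta\}$, the translate $h_{x_0}\in\g g$, and Lemma~\ref{lem:1.4}(iii). The only cosmetic difference is that the paper invokes the global analyticity criterion (Lemma~\ref{convoncplx} and Theorem~\ref{firstthmbosi}) to get $\sum_n\frac{1}{n!}\|f^n\xi\|_2<\infty$ for $f$ in a $\g g$-neighborhood of $0$ and then bounds $\int_E e^{|f|}\,dx$ from above via $\|f^n\chi_E\|_2\leq\frac{1}{\eps}\|f^n\xi\|_2$, whereas you restrict immediately to the one-parameter group $t\mapsto\pi(th_{x_0})$ via Proposition~\ref{thm-oneparam}(ii) and use Cauchy--Schwarz on $\hat E$ for the lower bound; both arguments arrive at the same contradiction.
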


\begin{proof} According to \cite[Sec. 10]{khdifferentiable} an 
element $\xi \in \cH$ is a smooth vector if and only if 
\[  \|f^n \xi\|_2 < \infty \quad  \mbox{ for every } n \in \N\text{ and every } f \in \g g.\] 
Hence the inclusion 
$\g g\subeq \bigcap_{p \in \N} L^p([0,1],\R)$ implies that all 
bounded functions are smooth vectors. In particular, 
$(\pi, \cH)$ is a smooth representation. 

By Lemma \ref{convoncplx} and Theorem \ref{firstthmbosi} an element $\xi \in \cH$ is analytic if and only if 
$\sum_{n=0}^\infty \frac{\|f^n \xi\|_2}{n!}$ 
converges on some neighborhood of the origin in $\g g$. 
If $\xi$ is non-zero, then there exists an 
$\eps > 0$ for which the subset 
$E = \{\,x\in[0,1]\,:\, \eps < |\xi(x)| < 1/\eps\,\}$ has positive measure. 
Let $\chi_E$ denote the characteristic function of the set $E$.
Analyticity of $\xi$ leads to the  estimates 
\[\int_E e^{|f(x)|}\, dx =  \sum_{n=0}^\infty \frac{\int_E |f(x)|^n\, dx}{n!}  
\leq \sum_{n=0}^\infty \frac{\|f^n\chi_E\|_2}{n!}  
\leq \frac{1}{\eps}\sum_{n=0}^\infty \frac{\|f^n\xi\|_2}{n!}<\infty  \] 
for all elements $f$ in a neighborhood of the origin in $\g g$.

Let $x_0 \in (0,1)$ be a point of metric density $1$ of $E$ 
and recall that this implies that the sets 
$E_\eps := E \cap [x_0, x_0 + \eps]$ satisfy 
$\lim_{\eps \to 0} \frac{\mu(E_\eps)}{\eps} = 1$. 
Therefore Lemma~\ref{lem:lebes} implies that 
the function $f(x) = \tilde h(x - x_0)$ satisfies 
$\int_E e^{tf(x)}\, dx = \infty$ for every $t~>~0$. 
This contradiction shows that there is no non-zero analytic vector 
in~$\cH$. 
\end{proof}

\section{An analytic representation without  bounded vectors}

\label{examplebounded}

In this appendix we give an example of an analytic unitary representation of a Banach--Lie group without nonzero bounded vectors. The notation in this appendix is the same as in Appendix \ref{exampleanalytic}.

\begin{definition} Let $G$ be a Banach--Lie group and $(\pi,\mathscr H)$ be a unitary representation of $G$. 
We call  $(\pi, \cH)$ 
\emph{bounded} if 
$\pi : G \to \mathrm U(\cH)$ is continuous with respect to the operator 
norm on $\mathrm U(\cH)$. 
A vector 
$v \in \cH$ is said to be {\it bounded} if the representation 
of $G$ on the closed invariant subspace $\cH_v = \oline{\mathrm{Span}(\pi(G)v)}$ 
is bounded. The subspace of bounded vectors in $\mathscr H$  is denoted by $\mathscr H^b$. 
The representation $(\pi, \cH)$ is said to be {\it locally bounded} 
if $\cH^b$ is dense in $\cH$. 
\end{definition}
Since $G$ and $\mathrm U(\cH)$ are Banach--Lie groups, 
every bounded representation is in particular analytic as a map 
$G \to \mathrm U(\cH)$. In particular, $\cH^b \subeq \cH^\omega$. 

Observe that a representation is locally bounded if and only if it is a direct 
sum of bounded representations. In fact, if $\cH^b$ is dense, then 
a standard application of Zorn's Lemma shows that 
$\cH$ is an orthogonal direct sum of cyclic subspaces generated by 
bounded vectors.

For an element $x$ of a vector space $V$, we write $x^*(\alpha) = \alpha(x)$ 
for the corresponding linear functional on the dual space $V^*$.

\begin{rmmk}
\label{remk-abc} 
(i) Let $G = (V,+)$ be the additive group of a finite 
dimensional real vector space and $\mu$ be the Lebesgue measure 
on $V^*$. Then $\pi(x)f = e^{ix^*}f$ 
defines a continuous unitary representation 
on $\cH = L^2(V^*,\mu)$. A vector 
$f \in \cH$ is bounded if and only if 
$f$ vanishes almost everywhere outside some compact subset. 
Clearly this condition is stronger than $f \in \cH^\omega$, which is 
equivalent to $e^{x^*} f$ 
being square integrable for every $x$ in a neighborhood of~$0$ in~$V$.

(ii) When $G = (V,+)$ is the additive group of a finite dimensional vector space, Bochner's 
Theorem asserts that every continuous positive definite 
function $\phi : G \to \C$ is the Fourier transform 
\[ \phi = \hat\mu, \quad \hat\mu(x) = \int_{V'} e^{i\alpha(x)}\, d\mu(\alpha)\] 
of a finite positive regular Borel measure $\mu$ 
on the dual space $V^*$. Then 
the representation of $\cH = L^2(V^*,\mu)$ by 
$\pi(x)f = e^{ix^*}f$ is cyclic, 
generated by the constant function $1$, and 
$\la \pi(x)1,1\ra = \hat\mu(x) = \phi(x)$. The description of 
the bounded and analytic vectors under (a) remains the same in this 
situation. 

(iii) If $G = (V,+)$ is the additive group of a Banach space 
and $\mu$ is a regular positive Borel measure on the topological dual space 
$V'$ with respect to the 
weak-$*$-topology, then we also obtain a unitary 
representation of $G$ on $\cH := L^2(V',\mu)$ by 
$\pi(x)f = e^{ix^*}f$. 

Every weak-$*$-compact subset of $V'$ is weakly bounded, hence bounded 
by the Banach--Steinhaus Theorem. Therefore the compact subsets of 
$V'$ are precisely the weak-$*$-closed bounded subsets. All 
closed balls in $V'$ have this property. 
If $\mu$ is supported by a bounded set, then one easily verifies 
that the representation $\pi$ is bounded. 
If this is not the case, then 
$f \in \cH^b$ is equivalent to 
$f$ vanishing $\mu$-almost everywhere outside a sufficiently 
large ball. Since $\mu$ is regular, this implies that 
$\pi$ is locally bounded. 

\end{rmmk}

\begin{theorem} Let $G=(V,+)$ be the additive group of a Banach space. For a positive definite function 
$\phi$ on  $G$ the corresponding 
GNS representation $(\pi_\phi, \cH_\phi)$ is locally bounded if 
and only if there exists a regular positive Borel measure 
$\mu$ on $V'$ with $\phi = \hat\mu$. 
\end{theorem}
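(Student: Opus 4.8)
The statement is a Bochner-type theorem, and the plan is to prove the two implications by rather different arguments. Throughout I would keep $V'$ equipped with the weak-$*$-topology, so that — as in Example \ref{remk-abc}(iii) — its compact subsets are exactly the weak-$*$-closed bounded ones, and I would use separability of $V$ twice: first to see that every closed ball $B_n=\{\alpha\in V':\|\alpha\|\le n\}$ is weak-$*$-metrizable and lies in the cylinder $\sigma$-algebra of $V'$, and second to see that $\cH_\phi$ is separable. Continuity of $\phi$ is automatic in either direction (it is $\hat\mu$, hence continuous, in one case, and a uniformly convergent sum of continuous functions in the other), so it may be assumed.

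For the implication ``$\Leftarrow$'', assume $\phi=\hat\mu$ with $\mu$ a regular positive Borel measure on $V'$. First I would note $\mu(V')=\phi(0)<\infty$, so $1\in L^2(V',\mu)$, and the representation $\rho(x)f=e^{ix^*}f$ on $L^2(V',\mu)$ from Example \ref{remk-abc}(iii) satisfies $\la\rho(x)1,1\ra=\hat\mu(x)=\phi(x)$; by uniqueness of the GNS construction this identifies $(\pi_\phi,\cH_\phi)$ with the cyclic subrepresentation $\cH_1=\oline{\mathrm{Span}(\rho(V)1)}\subeq L^2(V',\mu)$. The crux is that, although Example \ref{remk-abc}(iii) only asserts local boundedness of the \emph{whole} space $L^2(V',\mu)$, it passes to $\cH_1$: the von Neumann algebra $\rho(V)''$ consists of the multiplication operators by functions measurable for the $\mu$-completion of the cylinder $\sigma$-algebra of $V'$, and since $V$ is separable this $\sigma$-algebra contains every $B_n$, so multiplication by $\chi_{B_n}$ lies in $\rho(V)''$ and therefore preserves the invariant subspace $\cH_1$. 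For $g\in\cH_1$ the truncations $\chi_{B_n}g\in\cH_1$ converge to $g$ by regularity of $\mu$ (which forces $\mu(V'\setminus B_n)\to 0$, as every compact set is bounded), and each $\chi_{B_n}g$ is a bounded vector because on the invariant subspace of $L^2$-functions supported in $B_n$ one has $\|\rho(x)-I\|_{\mathrm{Op}}\le\sup_{\|\alpha\|\le n}|e^{i\alpha(x)}-1|\le n\|x\|$. Hence $\cH_\phi^b$ is dense, i.e. $(\pi_\phi,\cH_\phi)$ is locally bounded.

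For the implication ``$\Rightarrow$'', assume $(\pi_\phi,\cH_\phi)$ is locally bounded. By the remark following the definition of local boundedness, $\cH_\phi=\bigoplus_{j\in\N}\cH_j$ (countably, by separability) with each $\rho_j:=\pi_\phi|_{\cH_j}$ norm-continuous. For each $j$ I would then run the following chain: a norm-continuous unitary representation of $(V,+)$ has bounded self-adjoint generators $B_j(x)$ (from $\rho_j(tx)=e^{itB_j(x)}$), the map $x\mapsto B_j(x)$ is real-linear and bounded — continuous near $0$ via the principal logarithm, then additive hence linear — and the $B_j(x)$ commute; the unital commutative $C^*$-algebra $\mathcal C_j$ they generate has a compact Gelfand spectrum $\Delta_j$, and $\chi\mapsto\bigl(x\mapsto\chi(B_j(x))\bigr)$ is a continuous injection of $\Delta_j$ into $V'$ (injective since the $B_j(x)$ generate $\mathcal C_j$; landing in $V'$ because of the bound $|\chi(B_j(x))|\le\|B_j(x)\|\le C_j\|x\|$), hence a homeomorphism onto a compact set $S_j\subeq V'$; under $\mathcal C_j\cong C(S_j)$ the generator $B_j(x)$ becomes the evaluation $\alpha\mapsto\alpha(x)$, so its projection-valued measure $P_j$ on $S_j$ gives $\rho_j(x)=\int_{S_j}e^{i\alpha(x)}\,dP_j(\alpha)$. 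Writing the cyclic vector as $\Omega=\sum_j\Omega_j$ with $\Omega_j\in\cH_j$ and putting $\mu_j=\la P_j(\cdot)\Omega_j,\Omega_j\ra$ and $\mu=\sum_j\mu_j$, one obtains a finite positive Borel measure with $\mu(V')=\|\Omega\|^2=\phi(0)$ and $\phi(x)=\sum_j\la\rho_j(x)\Omega_j,\Omega_j\ra=\int_{V'}e^{i\alpha(x)}\,d\mu(\alpha)=\hat\mu(x)$; regularity follows by tightness, since for $\eps>0$ one chooses $N$ with $\sum_{j>N}\|\Omega_j\|^2<\eps$ and then $\mu(V'\setminus(S_1\cup\cdots\cup S_N))<\eps$ with $S_1\cup\cdots\cup S_N$ compact.

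The routine parts are the GNS identification and the operator-norm estimates in ``$\Leftarrow$'', together with the assembly $\mu=\sum_j\mu_j$ and the tightness bookkeeping for regularity in ``$\Rightarrow$''. The main obstacle I anticipate is the heart of ``$\Rightarrow$'': realizing each bounded cyclic summand by a projection-valued measure genuinely supported on a \emph{compact subset of $V'$}. This is exactly where separability of $V$ (so that bounded subsets of $V'$ are weak-$*$-metrizable) and the Gelfand-theoretic identification of the spectrum $\Delta_j$ with a compact set $S_j\subeq V'$ do the real work; once this identification is in place, everything else is bookkeeping.
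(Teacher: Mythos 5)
Your proof is correct but differs from the paper's at the key step of each direction. For the implication ``$\Leftarrow$'', the paper simply identifies the GNS representation with the cyclic subrepresentation of $L^2(V',\mu)$ generated by $1$ and appeals to Example~\ref{remk-abc}(iii), leaving implicit why local boundedness of the ambient $L^2(V',\mu)$ is inherited by that cyclic subspace; you close this gap with the von Neumann algebra argument (multiplication by $\chi_{B_n}$ lies in $\pi(V)''$, hence commutes with the projection onto $\cH_\phi$, and its truncations of a given $g\in\cH_\phi$ are bounded vectors of $\cH_\phi$ converging to $g$), which is genuinely needed and a welcome refinement. For ``$\Rightarrow$'', after the same reduction to a single bounded cyclic summand, the paper invokes \cite[Thm.~4.1]{Ne09} (the Spectral Theorem for semibounded representations) as a black box to obtain a regular spectral measure on a weak-$*$-compact subset of $V'$; you instead reprove exactly the needed special case by hand, extracting bounded commuting self-adjoint generators $B_j(x)$, establishing bounded real-linearity of $x\mapsto B_j(x)$ via the principal logarithm, and embedding the Gelfand spectrum of the resulting commutative $C^*$-algebra as a compact subset $S_j\subeq V'$ carrying the spectral measure. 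Your route is thus more self-contained and elementary, at the price of re-deriving (in the norm-continuous abelian case) a theorem the paper cites. The countability of the index set and the tightness argument for regularity of $\mu=\sum_j\mu_j$ match the paper in substance: the paper gets countability from orthogonality of the components of the cyclic vector, you from separability of $\cH_\phi$, which the paper assumes throughout.
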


\begin{proof} If $\phi = \hat\mu$ for a regular positive Borel 
measure on $V'$, then 
the GNS representation defined by $\phi$ is isomorphic 
to the cyclic subrepresentation of $L^2(V',\mu)$ generated 
by the constant function $1$, hence locally bounded 
by Example~\ref{remk-abc}(iii). 

Conversely, suppose that $(\pi_\phi, \cH_\phi)$ is locally 
bounded, i.e., it can be expressed as a direct sum 
$\widehat\bigoplus_{j \in J} (\pi_j,\cH_j)$
of bounded representations. 
Writing $\phi = \sum_{j \in J} v_j$ with $v_j \in \cH_j$, 
the orthogonality of the family $\{v_j\, : \,{j\in J}\}$ implies that 
only countably many of them are non-zero, and since 
$\phi$ is cyclic in $\cH_\phi$, the index set $J$ is countable. 
Suppose that all the functions 
$\phi_j(g) = \la \pi_j(g)v_j, v_j\ra$ are Fourier transforms 
of positive regular Borel measures $\mu_j$ on $V'$. Then 
$\phi = \sum_j \phi_j = \sum_j \hat\mu_j$ is the Fourier 
transform of the positive Borel measure $\mu = \sum_{j \in J} \mu_j$. 
Therefore without loss of generality we may  assume that $\pi_\phi$ is a bounded 
representation. Then the Spectral Theorem for 
semibounded representations \cite[Thm.~4.1]{Ne09} 
implies the existence of a regular Borel spectral measure 
$P$ on some weak-$*$-compact subset $X \subeq V'$ with 
$\pi_\phi(x) = \int_X e^{i\alpha(x)}\, dP(\alpha)$. 
For the cyclic vector $v \in\cH_\phi$ satisfying 
$\phi(x) = \la \pi_\phi(x)v, v \ra$, this leads 
\[ \phi(x) = \la \pi_\phi(x)v, v \ra 
= \int_X e^{i\alpha(x)}\, dP^v(\alpha) = \widehat{P^v}(x),\] 
where $P^v(E) = \la P(E)v,v\ra$ is the positive regular Borel 
measure associated to $v$ and $P$.
\end{proof}

The preceding discussion shows that the locally bounded cyclic 
representations are precisely those that can be realized in spaces 
$L^2(V',\mu)$ for regular Borel measures on $V'$. 
For a  representation $(\pi, \cH)$ with no non-zero bounded vector, 
for no non-zero $v \in \cH$, the positive definite function 
$\pi^{v,v}(x) = \la \pi(x)v,v\ra$ is a Fourier transform 
of a regular Borel measure on $V'$. In this sense they are very singular. 
In the light of this discussion, it is a natural question 
how big the difference between analytic and bounded representations 
really is. 

From now on, for every measurable function $f$ we set 
\[
\|f\|= \sup \Big\{\, \frac{\|f\|_n}{\sqrt[n]{n!}}\, :\, n \in \N\,\Big\}.
\]
The following theorem shows that analytic representations of 
Banach--Lie groups need not contain non-zero bounded vectors.

\begin{theorem} The space 
\[ \g g= \Big\{ f \in \bigcap_{p \in \N} L^p([0,1],\R) \,:\, 
\|f\|  < \infty \Big\} \] 
is a Banach space. The unitary representation 
$(\pi, L^2([0,1],\C))$ of $G = (\g g,+)$ given by 
$\big(\pi(f)\xi\big)(x) = e^{if(x)}\xi(x)$ is analytic with $\cH^b = \{0\}$. 
\end{theorem}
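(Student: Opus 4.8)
The plan is to dispatch the three assertions separately. That $\g g$ is a Banach space will be routine — exactly as for the analogous space in Appendix~\ref{exampleanalytic}: the triangle inequality comes from $\|f+g\|_n\le\|f\|_n+\|g\|_n\le(\|f\|+\|g\|)\sqrt[n]{n!}$, definiteness from $\|f\|=0\Rightarrow\|f\|_1=0$, and completeness from the observation that a Cauchy sequence $\{f_k\}$ in $\g g$ is Cauchy in every $L^n([0,1],\R)$ (since $\|\cdot\|_n\le\sqrt[n]{n!}\,\|\cdot\|$), hence has an $L^1$-limit $f\in\bigcap_pL^p$ with $\|f\|_n=\lim_k\|f_k\|_n\le(\sup_k\|f_k\|)\sqrt[n]{n!}$, so that $f\in\g g$ and $f_k\to f$ in $\g g$. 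I will also record $L^\infty([0,1],\R)\sseq\g g$, which follows from $\|f\|_n\le\|f\|_\infty$ and $\sqrt[n]{n!}\ge1$; this is used in the other two parts.

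For analyticity I would argue exactly as in the proof of Theorem~\ref{thm:1.9}. Using \cite[Sec.~10]{khdifferentiable}, the inclusion $\g g\sseq\bigcap_pL^p$ already shows every bounded measurable function is a smooth vector. Since $\dd\pi(f)\xi=if\xi$, for bounded $\xi$ one has $\|\dd\pi(f)^n\xi\|_2=\|f^n\xi\|_2\le\|\xi\|_\infty\|f\|_{2n}^n\le\|\xi\|_\infty\|f\|^n\big((2n)!\big)^{1/2}$, using $\|f\|_{2n}\le\|f\|\sqrt[2n]{(2n)!}$. As $\big((2n)!\big)^{1/2}/n!$ grows only like $2^n$ up to polynomial factors, a ratio test gives $\sum_n\tfrac1{n!}\|\dd\pi(f)^n\xi\|_2<\infty$ once $\|f\|$ is small, and the passage from $\g g$ to $\g g^\C$ costs only a constant by \eqref{equivalenceofnorms}. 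Hence, by Lemma~\ref{convoncplx} and Theorem~\ref{firstthmbosi}, every bounded function is an analytic vector; being dense in $\cH=L^2([0,1],\C)$, they make $(\pi,\cH)$ analytic.

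The substantive part is $\cH^b=\{0\}$. Assume $\xi\in\cH^b$ with $\xi\ne0$, so the representation on $\cH_\xi=\oline{\mathrm{Span}(\pi(G)\xi)}$ is norm-continuous. First I note that $\cH_\xi$ is ``fat'': for any measurable $E\sseq[0,1]$ one has $\pi\chi_E\in L^\infty([0,1],\R)\sseq\g g$ and $1-2\chi_E=e^{i\pi\chi_E}$, so $(1-2\chi_E)\xi=e^{i\pi\chi_E}\xi\in\pi(G)\xi$ and therefore $\chi_E\xi=\tfrac12\big(\xi-(1-2\chi_E)\xi\big)\in\cH_\xi$. Next, fix $m_0$ with $\mu(S')>0$, where $S'=\{x:0<|\xi(x)|<m_0\}$, and — using the Lebesgue density theorem \cite[\S 7.12]{Ru86} and the fact that $0$ and $1$ are never density points of subsets of $[0,1]$ — choose $x_0\in(0,1)$ of metric density $1$ for $S'$. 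Put $f_0(x)=-\log|x-x_0|$; then $\|f_0\|_n^n=\int_0^1\big|\log|x-x_0|\big|^n\,dx\le\int_{-1}^{1}\big|\log|u|\big|^n\,du=2\,n!$ shows $f_0\in\g g$, while $|f_0(x)|\to\infty$ as $x\to x_0$. Now I use that $t\mapsto\pi(tf_0)\big|_{\cH_\xi}$ is a norm-continuous one-parameter unitary group, so its infinitesimal generator $A$ is a \emph{bounded} operator on $\cH_\xi$ (the classical characterisation of norm-continuous unitary groups; cf.\ \cite{reedsimon}), and every $\eta\in\cH_\xi$ with $f_0\eta\in L^2$ lies in $\mathcal D(A)$ with $A\eta=if_0\eta$, by dominated convergence applied to $\tfrac1t\big(e^{itf_0}-1\big)\eta$. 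To contradict boundedness of $A$, set $F_n=\{x\in S':|f_0(x)|>n\}$: since $\{|f_0|>n\}$ is a neighbourhood of $x_0$ and $x_0$ is a density point of $S'$, $\mu(F_n)>0$, and the vector $\eta_n:=\chi_{F_n}\xi/\|\chi_{F_n}\xi\|_2\in\cH_\xi$ satisfies $\|\eta_n\|_2=1$, has $f_0\eta_n\in L^2$ (as $|\xi|<m_0$ on $F_n$ and $f_0\in L^2$), and $\|A\eta_n\|_2=\|f_0\eta_n\|_2\ge n$ — contradicting $\|A\|_\mathrm{Op}<\infty$. Hence $\cH^b=\{0\}$.

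The main obstacle is precisely this last construction: for a completely arbitrary nonzero $\xi$ one must manufacture, inside its cyclic subspace $\cH_\xi$, a sequence of unit vectors on which a single fixed multiplication operator has arbitrarily large norm. The two ingredients that make this work are the identity $\chi_E\xi=\tfrac12\big(\xi-(1-2\chi_E)\xi\big)$, which shows $\cH_\xi$ contains $\chi_E\xi$ for every measurable $E$, and the Lebesgue density theorem, which lets the logarithmic singularity of $f_0$ be placed exactly where $\{0<|\xi|<m_0\}$ has density $1$; the remaining input is the well-known fact that a norm-continuous one-parameter unitary group has a bounded generator.
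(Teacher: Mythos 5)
Your first two parts match the paper's approach essentially verbatim: the completeness argument is the obvious one, and for analyticity you use the same estimate $\|f\|_{2n}^n\le\|f\|^n\sqrt{(2n)!}\le 2^n n!\,\|f\|^n$ (the paper establishes analyticity by showing the single cyclic vector $1$ lies in $\cH^\omega$, while you note the same estimate covers all bounded vectors; either suffices).

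For $\cH^b=\{0\}$ you arrive at the same contradiction but by a genuinely different mechanism. The paper works with the set $E=\{|\xi|\ge\eps\}$, on which $\xi^{-1}$ is bounded, and uses the $L^\infty$-module structure of $\cH_\xi$ to show first $\chi_E\in\cH_\xi$ and then the whole space $L^2(E)\sseq\cH_\xi$; boundedness of the restriction of $\pi$ to $L^2(E)$ is then read off directly as the statement that every $f\in\g g$ is essentially bounded on $E$, which the translated logarithm violates. You avoid building $L^2(E)$ inside $\cH_\xi$ entirely: the identity $1-2\chi_E=e^{i\pi\chi_E}$ (with $\pi\chi_E\in L^\infty\sseq\g g$) immediately puts $\chi_E\xi$ into $\cH_\xi$ for \emph{every} measurable $E$, you work with a set $S'=\{0<|\xi|<m_0\}$ on which $|\xi|$ is bounded \emph{above} (rather than below), and you phrase the contradiction through the bounded generator $A$ of the norm-continuous one-parameter group together with the test vectors $\eta_n=\chi_{F_n}\xi/\|\chi_{F_n}\xi\|_2$, on which $\|A\eta_n\|=\|f_0\eta_n\|_2\ge n$. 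The two routes are of comparable length and both need $L^\infty\sseq\g g$, the Lebesgue density theorem, and the membership of a translated logarithm in $\g g$; your version buys a more elementary entry into $\cH_\xi$ (no inversion of $\xi$ and no density argument for $L^2(E)$), at the cost of keeping explicit track of the operator $A$ and its unboundedness via the $\eta_n$. Both your auxiliary checks — $\{|f_0|>n\}$ is a neighbourhood of $x_0$, $|\xi|<m_0$ on $F_n$ ensures $f_0\eta_n\in L^2$, $\mu(F_n)>0$ via the density point — are correct. This is a valid alternative proof.
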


\begin{proof} To prove that
$\g g$ is indeed a Banach space is straightforward and we leave it to the reader. 
As $\lim_{n \to \infty} \sqrt[n]{n!} = \infty$, we also have 
$L^\infty([0,1],\R) \subeq \g g$, so that the constant function 
$1$ is a cyclic vector. To show that $(\pi, \cH)$ is analytic, 
it therefore suffices to show that $1 \in \cH^\omega$. 

We claim that the series 
$\sum_{n = 0}^\infty \frac{1}{n!} \|\dd\pi(f)^n1\|_2
= \sum_{n = 0}^\infty \frac{1}{n!} \|f^n\|_2$ 
converges uniformly for $\|f\| < \frac{1}{2}$, and this implies 
the analyticity of $1$. Below we need the estimate 
\[ (2n)! 
= (1 \cdots 3 \cdots (2n-1))(2 \cdots 4 \cdots 2n)
\leq (2 \cdots 4 \cdots 2n)^2 
= 2^{2n}(n!)^2.\] 
This leads to 
\begin{align*} 
\sum_{n = 0}^\infty \frac{1}{n!} \|f^n\|_2 
= \sum_{n = 0}^\infty \frac{1}{n!} \|f\|_{2n}^n 
\leq \sum_{n = 0}^\infty \frac{\sqrt{(2n)!}}{n!} \|f\|^n
\leq \sum_{n = 0}^\infty 2^n \|f\|^n = \frac{1}{1 - 2\|f\|}.
\end{align*}

Next we show that $\cH^b = \{0\}$. Suppose that 
$\xi$ is a non-zero bounded vector, i.e., that the representation of 
$G$ on the cyclic subspace $\cH_\xi$ generated by $\xi$ is bounded. 
This implies in particular that $\cH_\xi$ is invariant under the 
derived representation, i.e., under multiplication with elements of 
$\g g$. As $\xi$ is non-zero, there exists an $\eps > 0$ for which the set 
$E = \{ x \in [0,1] : |\xi(x)| \geq \eps\}$ has positive measure. 
Since $(\xi\big|_E)^{-1}$ is bounded, 
the characteristic function $\chi_E$ of $E$ is contained in $\cH_\xi$, 
and further $L^\infty([0,1],\R)\cdot \chi_E \subeq \cH_\xi$ implies that 
$L^2(E) \subeq \cH_\xi$. 

The boundedness of the representation of $G$ on $L^2(E)$ implies in 
particular for each $f \in \g g$  that 
$\|e^{tf\big|_E} - \chi_E\|_\infty \to 0$ for $t \to 0$, and hence 
that $f\big|_E$ is essentially bounded. 
For $f(x) = \log(x)$ we have 
\[ \|f\|_n^n 
= \int_0^1 |\log(x)|^n\, dx 
= \int_0^\infty y^n e^{-y}\, dy = \Gamma(n+1) = n!, \] 
which shows that $\log(\cdot ) \in \g g$. 
Let $\widetilde\log$ denote the $1$-periodic extension of 
$\log$ to $\R$ and 
$\log_{x_0}(\cdot) = \widetilde\log(\cdot - x_0)\big|_{[0,1]}$. 
Then the translation invariance of the norm defining $\g g$ implies 
that $\log_{x_0} \in \g g$. 

As $E$ has positive measure, there exists a point $x_0 \in (0,1)\cap E$ 
of metric density $1$, hence an $\eps_0 > 0$ for which the set
$E_\eps = \{ x \in E \,:\, x_0 \leq x \leq x_0 + \eps\}$ 
has positive measure for every 
$\eps \in (0,\eps_0)$. This implies 
that $\log_{x_0}$ is not essentially bounded on $E$. 
From this contradiction we derive that $\cH^b =\{0\}$. 
\end{proof}

\section{Analytic functions in the Banach context}
\label{section-appendix-analyticmaps}

Let $\mathscr A$ and $\mathscr B$ be two Banach spaces over $\K$, where $\K\in\{\R,\C\}$. A \emph{homogeneous polynomial} 
of degree $n$ from $\mathscr A$ to $\mathscr B$ is a map 
\[
p:\mathscr A\to\mathscr B\;,\; p(v)=F(v,\ldots,v)
\] where
$
F:\mathscr A\times\cdots\times \mathscr A\to\mathscr B
$ is a symmetric $\mathbb K$-multilinear map.  The homogeneous polynomial $p$ is continuous if and only if $F$ is continuous.

If $U\sseq \mathscr A$ is an open set, then
a continuous function $f:U\to \mathscr B$ is called \emph{analytic} in $U$ if and only if for every $u\in U$ 
there exist a neighborhood $V_u$ of the origin in $\mathscr A$ and continuous homogeneous polynomials 
$p_n:\mathscr A\to \mathscr B$   such that $\deg(p_n)=n$, $u+V_u\sseq U$, and 
\begin{equation}
\label{eq-f(u+v)}
f(u+v)=\sum_{n=0}^\infty p_n(v)\ \ \text{for every $v\in V_u$}.
\end{equation}
\begin{rmk}
The convergence of the series \eqref{eq-f(u+v)} is pointwise. 
However, 
Theorem \ref{firstthmbosi} below implies that if the series \eqref{eq-f(u+v)} converges in $U$ pointwise, 
then for every  $u\in U$ the series also converges normally  at $u$  
(i.e., absolutely uniformly in a neighborhood of $u$). 
In some references, e.g., \cite[No. 3.2]{bourbakivariete}, analytic maps are defined on the basis 
of the latter form of convergence.  
\end{rmk}

Recall that a subset $S$ of a vector space $\mathscr V$ over $\K$ is called \emph{absorbing} if for every 
$v\in\mathscr V$ there exists a $t_v>0$ such that for all $c\in \K$, if $|c|\leq t_v$ then
$c\cdot v\in S$.

\begin{theorem}
\label{firstthmbosi}
Let $\mathscr A$ and $\mathscr B$ be Banach spaces over $\mathbb K$ where $\mathbb K\in\{\R,\C\}$.
Let $S\sseq \mathscr A$ be an absorbing set. For every integer $n\geq 0$, let $p_n:\mathscr A\to \mathscr B$ be a continuous 
homogeneous polynomial of degree $n$. Consider the formal series
\[
\phi(v)=\sum_{n=0}^\infty p_n(v).
\]
Then the following statements hold.
\begin{itemize}
\item[(i)]
Suppose that there exists an absorbing set $S\sseq \mathscr A$ such that 
the series $\phi(v)$ converges for every $v\in S$. Then  
there exists an open neighborhood $U$ of the origin in $\mathscr A$ such that
\[
\sum_{n=0}^\infty\sup\{\|p_n(v)\|\ :\ v\in U\}<\infty.
\]
\item[(ii)] Suppose that there exists an open neighborhood $V$ of the origin in $\mathscr A$ such that the series $\phi(v)$ converges for 
every $v\in V$. Then the function \[
\phi\big|_V:V\to \mathscr B
\] is analytic in $V$.

\end{itemize}
\end{theorem}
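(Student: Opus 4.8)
The plan is to reduce both parts to the same mechanism: a Baire category argument that converts pointwise convergence into a uniform bound on the $\|p_n(\cdot)\|$ near one point, followed by one-variable Cauchy-type estimates that, using the homogeneity of the $p_n$, turn this into a genuine growth bound $\|p_n\|\leq C\lambda^n$ for the polynomial norms $\|p_n\|:=\sup_{\|v\|\leq 1}\|p_n(v)\|$; from such a bound normal convergence near the origin is immediate. Throughout I write $F_n$ for the symmetric $n$-linear map with $p_n(v)=F_n(v,\ldots,v)$, and $F_n(w^{(j)},u^{(n-j)})$ for its value on $j$ copies of $w$ and $n-j$ copies of $u$.

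For (i) I would argue as follows. If $v\in S$ then $\phi(v)$ converges, so $p_n(v)\to 0$ and $\beta(v):=\sup_n\|p_n(v)\|<\infty$. Since $S$ is absorbing, each $a\in\mathscr A$ has $t_a a\in S$ for some $t_a>0$, whence $\|p_n(a)\|=t_a^{-n}\|p_n(t_a a)\|\leq t_a^{-n}\beta(t_a a)$; choosing $j\in\N$ with $j\geq 1/t_a$ and $k\in\N$ with $k\geq\beta(t_a a)$ puts $a$ in the set $A_{j,k}:=\{v\in\mathscr A:\|p_n(v)\|\leq kj^n\text{ for all }n\geq 0\}$. Thus $\mathscr A=\bigcup_{j,k\in\N}A_{j,k}$, and each $A_{j,k}$ is closed because every $p_n$ is continuous. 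By the Baire category theorem some $A_{j_0,k_0}$ has nonempty interior, so there are $a_0\in\mathscr A$ and $\rho>0$ with $\|p_n(v)\|\leq k_0 j_0^n$ for all $n$ and all $v\in a_0+B_\rho$. Now I would fix $\|w\|<\rho$ and look at the $\mathscr B$-valued polynomial $t\mapsto p_n(a_0+tw)$ of degree $\leq n$, which is bounded in norm by $k_0 j_0^n$ for $|t|\leq 1$ and whose leading coefficient is exactly $p_n(w)$: a Cauchy integral estimate when $\K=\C$, and the Chebyshev estimate for the leading coefficient of a real polynomial bounded on $[-1,1]$ when $\K=\R$ (applied to scalar functionals and then to $\mathscr B$), give $\|p_n(w)\|\leq C^n k_0 j_0^n$ for a universal constant $C$. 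By homogeneity this yields $\|p_n\|\leq k_0\lambda^n$ with $\lambda:=2Cj_0/\rho$, hence $\sum_n\sup_{\|v\|\leq r}\|p_n(v)\|=\sum_n r^n\|p_n\|<\infty$ for every $r<1/\lambda$, and $U=B_r$ works.

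For (ii) I would first apply (i) with $S=V$, an open neighbourhood of the origin and hence absorbing, to get $r_0>0$ with $\|p_n\|\leq Mr_0^{-n}$, so $\sum_n\|p_n(v)\|$ converges uniformly on $\overline{B_{r_1}}$ for every $r_1<r_0$. On such a ball $\phi$ is the uniform limit of the polynomials $\phi_N=\sum_{n\leq N}p_n$; re-expanding each term around a point $u$ with small norm via $p_n(u+w)=\sum_{m\leq n}\binom{n}{m}F_n(w^{(m)},u^{(n-m)})$ and the polarization inequality $\|F_n\|\leq\frac{n^n}{n!}\|p_n\|$, the resulting double series is absolutely convergent for $e(\|u\|+\|w\|)<r_0$, so it may be regrouped as $\sum_m q_m(w)$ with $q_m$ a continuous homogeneous polynomial of degree $m$; this shows $\phi$ is analytic on a ball around the origin. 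To reach an arbitrary $u\in V$ I would exploit the openness of $V$: choosing $\delta>0$ with $u+\overline{B_\delta}\sseq V$, the translated family satisfies $\sup_n\|p_n(u+w)\|<\infty$ for $\|w\|\leq\delta$, so the Baire argument again produces a ball inside $V$ on which all $\|p_n(\cdot)\|$ are uniformly bounded, and the Cauchy/Chebyshev transfer produces a neighbourhood of $u$ on which $\sum_n p_n$ converges normally, whence analyticity at $u$ as before. Alternatively one may invoke a Vitali--Montel theorem for $\mathscr B$-valued analytic maps: the $\phi_N$ are analytic on $V$, converge pointwise there, and the local uniform bounds just obtained make the convergence locally uniform, so $\phi|_V$ is analytic.

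The main obstacle will be the quantitative de-localization in (i): Baire's theorem only gives a uniform bound on $\|p_n(\cdot)\|$ over some ball $a_0+B_\rho$ whose centre and radius are uncontrolled, and everything hinges on converting this into the estimate $\|p_n\|\leq k_0\lambda^n$ valid near the origin — this is exactly where the homogeneity of the $p_n$ together with the one-variable Cauchy (resp. Chebyshev) estimates enter, and where, in the real case, one must either argue directly with real polynomials on $[-1,1]$ or pass to the complexified polynomials. In (ii) the corresponding delicate point is propagating normal convergence from the neighbourhood of the origin supplied by (i) to a neighbourhood of an arbitrary point of $V$, for which the openness of $V$ is essential.
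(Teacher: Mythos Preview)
The paper does not actually prove this theorem: its ``proof'' consists entirely of citations to Proposition~5.2 and Theorem~5.2 of Bochnak--Siciak. So there is no argument in the paper to compare against; your proposal is a genuine attempt to supply what the paper omits.

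Your treatment of (i) is correct and is the standard argument: convergence on $S$ bounds $\sup_n\|p_n(v)\|$ for $v\in S$; absorbing plus homogeneity propagate this to $\|p_n(a)\|\le kj^n$ for every $a$; Baire yields a ball $a_0+B_\rho$ with $\|p_n(\cdot)\|\le k_0 j_0^n$ there; and extracting the leading coefficient of $t\mapsto p_n(a_0+tw)$ (Cauchy over $\C$, Chebyshev via Hahn--Banach over $\R$) gives $\|p_n\|\le k_0\lambda^n$, hence normal convergence near~$0$.

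For (ii) there is a genuine gap at precisely the step you flagged as delicate. Given $u\in V$, your Baire step only produces a ball $a_1+B_{\rho_1}$ \emph{somewhere} in $u+B_\delta$ on which $\sup_n\|p_n(\cdot)\|\le k_1$, and the Cauchy/Chebyshev step applied to $t\mapsto p_n(a_1+tw)$ extracts the leading coefficient $p_n(w)$ --- information attached to the \emph{origin}. Homogeneity then gives $\|p_n\|\le k_1(C/\rho_1)^n$ and normal convergence on a ball around $0$, not around $u$; nothing in the argument produces a neighbourhood of $u$. The Vitali alternative faces the same obstruction: to apply it on all of $V$ you need the partial sums $\phi_N$ locally uniformly bounded near \emph{every} point of $V$, whereas Baire only supplies this near points of a residual set. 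Your polarization re-expansion is fine but only covers $e(\|u\|+\|w\|)<r_0$, which need not contain $V$. In short, you have shown $\phi$ is analytic on some ball about the origin, but the passage to an arbitrary $u\in V$ is not established; this extension is exactly the nontrivial content of Bochnak--Siciak's Theorem~5.2 and requires more than a repetition of the Baire plus leading-coefficient mechanism.
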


\begin{proof}
Statement (i) follows from \cite[Prop. 5.2]{bochnaksiciak} and \cite[Thm. 5.2]{bochnaksiciak}.
Statement (ii) follows from \cite[Thm. 5.2]{bochnaksiciak}
\end{proof}
\begin{rmk}
When
$\mathscr A=\C^n$ and $\mathscr B=\C$,  Theorem \ref{firstthmbosi} implies a result which is originally due to Hartogs. 
A proof of this special case is given in  \cite[Thm. 1.5.6]{rudinfncth}.

\end{rmk}

Let $\mathscr A$ be a real Banach space and $\mathscr B$ be a complex Banach space (which can also be considered 
as a real Banach space). Every $\R$-multilinear map
$F:\mathscr A\times \cdots \times \mathscr A\to \mathscr B$ can be extended to a $\C$-multilinear map
$F^\mathbb C:\mathscr A^\C\times \cdots\times \mathscr A^\C\to \mathscr B$
by extension of scalars. Therefore every continuous homogeneous polynomial $p:\mathscr A\to\mathscr B$ extends to a continuous homogeneous
polynomial $p^\C:\mathscr A^\C\to \mathscr B$.

\begin{theorem}
\label{secondthmbosi}
Let $\mathscr A$ be a real Banach space and $\mathscr B$ be a complex Banach space. For every integer $n\geq 0$, 
let $p_n:\mathscr A\to \mathscr B$ be a  
continuous homogeneous polynomial of degree $n$. Suppose that the formal series
\[
\phi(v)=\sum_{n=0}^\infty p_n(v)
\]
converges for every $v\in U$, where $U$ is an open neighborhood of zero in $\mathscr A$.
Then there exists an open neighborhood $U^\C$ of zero in $\mathscr A^\C$ such that 
$U\sseq U^\C$, the series 
\[
\phi^\C(v)=\sum_{n=0}^\infty p^\C_n(v)
\]
converges for every $v\in U^\C$, and the map $
\phi^\C\big|_{U^\C}:U^\C\to \mathscr B$
 is analytic in $U^\C$.
\end{theorem}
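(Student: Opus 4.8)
The plan is to reduce to normally (absolutely, locally uniformly) convergent series near the origin and then import the elementary estimate that controls a complexified polynomial by its real counterpart. Since $U$ is a neighbourhood of $0$ it contains a ball $B_\R(0,s)$, and this ball is absorbing, so Theorem~\ref{firstthmbosi}(i) yields, after shrinking, $\sum_{n\ge 0}s^n c_n<\infty$ where $c_n=\sup\{\,\|p_n(v)\|:\|v\|_\R\le 1\,\}$; in particular $c_n\le Cs^{-n}$ for some constant $C$.

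Next I would pass to the symmetric $\R$-multilinear forms $F_n$ with $p_n(v)=F_n(v,\dots,v)$ and their complexifications $F_n^\C$. Working directly from the definition of $\|\cdot\|_\C$ as an infimum of sums $\sum_j|\zeta_j|\,\|v_j\|_\R$ (the formula at the beginning of Section~\ref{sec-normoncomplex}) one checks $\|F_n^\C\|\le\|F_n\|$, while the polarization formula gives $\|F_n\|\le\frac{n^n}{n!}c_n$; combining these, $\|p_n^\C(v)\|\le\frac{n^n}{n!}c_n\|v\|_\C^n\le C\frac{n^n}{n!}s^{-n}\|v\|_\C^n$. Since $n^n/n!=O(e^n)$, the series $\sum_n p_n^\C(v)$ converges normally on the ball $B_\C(0,\rho)$ for every $\rho<s/e$, and by Theorem~\ref{firstthmbosi}(ii) its sum $\phi^\C$ is analytic there. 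This already produces a complex neighbourhood of $0$ on which the assertion holds.

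To arrange $U\sseq U^\C$ I would use that $p_n^\C$ restricts to $p_n$ on $\mathscr A$, so $\sum_n p_n^\C$ already converges at every point of $U$ itself, together with the fact that $\phi|_U$ is real analytic (Theorem~\ref{firstthmbosi}(ii)): around each $u\in U$ the function $\phi$ has a convergent homogeneous expansion $\phi(u+w)=\sum_n q_{n,u}(w)$, and applying Theorem~\ref{firstthmbosi}(i) to this expansion and then the estimate of the previous paragraph complexifies it to a holomorphic germ $\Phi_u$ at $u$ restricting to $\phi$ on the real points. By uniqueness of analytic continuation (the identity principle for Banach-valued holomorphic functions on a connected open set in which the totally real part is dense) these germs agree on overlaps and glue to a holomorphic map $\Phi$ on an open set $U^\C$ with $U\sseq U^\C$. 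One finally identifies $\Phi$ with $\sum_n p_n^\C$: near $0$ this is a Fubini-type rearrangement of the absolutely convergent double series $\sum_m\sum_{k\le m}\binom{m}{k}F_m^\C(u^{[m-k]},w^{[k]})$, and the identity $\Phi=\sum_n p_n^\C$ --- in particular the convergence of the latter --- then propagates from a neighbourhood of $0$ to all of $U^\C$ by analyticity, after shrinking $U^\C$ if necessary.

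The main obstacle is precisely this last propagation. The polarization constant $n^n/n!$ costs a factor of $e$, so the direct size estimate only gives absolute convergence of $\sum_n p_n^\C$ on a ball strictly smaller than the convergence ball of the regrouped series $\sum_n q_{n,u}^\C$ at a given base point $u$; one cannot extend the statement that $\sum_n p_n^\C$ converges over all of $U^\C$ from the estimate alone, but must push it through along paths in $U$, which is where connectedness of the relevant component of $U$ enters. A clean alternative is to take $U^\C$ to be the interior of the set on which $\sum_n p_n^\C$ converges and to prove directly that it contains $U$, but this reduces to the same argument. Everything else --- that $\|\cdot\|_\C$ is projective enough to force $\|F_n^\C\|\le\|F_n\|$, the polarization bound, and the gluing of germs --- is routine.
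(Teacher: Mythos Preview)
The paper's own proof is a bare citation to \cite[Prop.~5.4]{bochnaksiciak} together with Theorem~\ref{firstthmbosi}(ii), so you are supplying far more detail than the authors do. Your first two paragraphs are correct and self-contained: the bound $\|F_n^\C\|\le\|F_n\|$ is exactly right for the projective complexification norm defined in Section~\ref{sec-normoncomplex}, and combined with polarization it gives normal convergence of $\sum p_n^\C$ on a complex ball of radius $\sim s/e$. Your gluing of local complexified Taylor expansions is essentially the proof of the cited Proposition~5.4, and it does produce a holomorphic $\Phi$ on an open $U^\C\supseteq U$ with $\Phi|_U=\phi$. For the paper's purposes this already suffices, since the only use of Theorem~\ref{secondthmbosi} (in Lemma~\ref{convoncplx}) needs merely a complex neighbourhood of~$0$.

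The point you flag as ``the main obstacle'' is a genuine gap, and your proposed fix does not close it. The assertion that the identity $\Phi=\sum_n p_n^\C$, and in particular the \emph{convergence} of the right-hand side, ``propagates from a neighbourhood of $0$ to all of $U^\C$ by analyticity'' is false as stated: analytic continuation transports the function $\Phi$, not any particular series representation of it (the prototype is $\sum z^n$ versus $1/(1-z)$). The correct mechanism is Theorem~\ref{fourththmbosi}: once $\Phi$ is holomorphic on $U^\C$, its Taylor series at $0$, which is $\sum_n p_n^\C$, converges on every \emph{balanced} open $W\subseteq U^\C$. So what one actually needs is that $U^\C$ can be chosen to contain, for each $u\in U$, the full disk $\{\zeta u:|\zeta|\le 1\}$; this is where the homogeneity of the $p_n$ enters, since $\sum_n p_n^\C(\zeta u)=\sum_n\zeta^n p_n(u)$ is a one-variable series with radius $>1$ whenever $(1+\epsilon)u\in U$. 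Arranging this compatibly with openness of $U^\C$ still takes some care, and the paper's citation to \cite{bochnaksiciak} hides the same issue rather than resolving it explicitly.
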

\begin{proof}
The statements of the theorem follow from \cite[Prop. 5.4]{bochnaksiciak} and Theorem \ref{firstthmbosi}(ii) above.
\end{proof}
An analogue of Hartogs' Theorem is also valid in this framework.
\begin{theorem}
\label{thirdthmbosi}
Let $\mathscr A$, $\mathscr B$, and $\mathscr C$ be complex Banach spaces and $U\sseq \mathscr A\times \mathscr B$ be an 
open set. If a function $f:U\to \mathscr C$ is separately analytic, then it is analytic.
\end{theorem}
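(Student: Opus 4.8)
The result is essentially the classical Hartogs separate-analyticity theorem transported to the Banach setting, and a complete proof is available in the literature on separately analytic maps (see \cite{bochnaksiciak}, \cite{siciakLe}); the plan below indicates the argument I would follow. Since analyticity is a local property, it suffices to fix a point of $U$, which after a translation we take to be the origin, and to choose $r>0$ with $\{(a,b):\|a\|<2r,\ \|b\|<2r\}\sseq U$. For each $b$ with $\|b\|<2r$ the map $a\mapsto f(a,b)$ is analytic on $\{\|a\|<2r\}$, so, expanding along complex lines through the origin and using the one-variable theory, its Taylor series at the origin
\[
f(a,b)=\sum_{n=0}^\infty p_n(a;b)
\]
converges for $\|a\|<2r$, each $p_n(\,\cdot\,;b)$ being a continuous homogeneous polynomial of degree $n$ on $\mathscr A$. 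The Cauchy integral formula
\[
p_n(a;b)=\frac{1}{2\pi i}\int_{|\zeta|=\eps}\frac{f(\zeta a,b)}{\zeta^{n+1}}\,d\zeta,
\]
valid for $\eps\|a\|<2r$, shows, using analyticity of $b\mapsto f(\zeta a,b)$ and compactness of the contour, that $b\mapsto p_n(a;b)$ is analytic on $\{\|b\|<2r\}$ for every $a\in\mathscr A$ and $n\in\N$.

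The decisive step is to establish that $f$ is bounded on a neighbourhood of the origin, and this uses the Baire category theorem. For $k\in\N$ set
\[
E_k=\{\,b\in\mathscr B:\|b\|\le r\ \text{and}\ \|f(a,b)\|\le k\ \text{whenever}\ \|a\|\le 1/k\,\}.
\]
Each $E_k$ is closed, because $b\mapsto\sup_{\|a\|\le 1/k}\|f(a,b)\|$ is lower semicontinuous, and $\bigcup_k E_k=\{b:\|b\|\le r\}$, because for each such $b$ the analytic function $a\mapsto f(a,b)$ is bounded near $0$. The closed ball $\{\|b\|\le r\}$ is a complete metric space, so by the Baire category theorem some $E_{k_0}$ has nonempty interior; hence $f$ is bounded by $k_0$ on a set $\{\|a\|\le\rho_0\}\times\{\|b-b_0\|\le s\}$, and then the Cauchy formula yields $\|p_n(a;b)\|\le k_0\,\rho_0^{-n}\|a\|^n$ for all $a\in\mathscr A$ and all $b$ with $\|b-b_0\|\le s$.

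It remains to turn this into a convergence estimate uniform near the origin in both variables. Fixing $a\in\mathscr A$, the functions $b\mapsto\tfrac1n\log\|p_n(a;b)\|$ are plurisubharmonic on $\{\|b\|<2r\}$, are bounded above on $\{\|b-b_0\|\le s\}$ by the estimate just obtained, and satisfy $\limsup_{n\to\infty}\|p_n(a;b)\|^{1/n}\le\|a\|/2r$ at every point, since $\sum_n t^n p_n(a;b)$ converges for $|t|<2r/\|a\|$. A Banach-space version of the Hartogs lemma for plurisubharmonic functions then propagates the bound and produces $\rho_1,\rho_2>0$ with
\[
\sum_{n=0}^\infty\sup\{\,\|p_n(a;b)\|:\|a\|\le\rho_1,\ \|b\|\le\rho_2\,\}<\infty;
\]
in particular $f$ is bounded on $\{\|a\|\le\rho_1\}\times\{\|b\|\le\rho_2\}$. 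To conclude, one may either expand each $p_n(\,\cdot\,;b)$ into its Taylor series in $b$ at the origin, regroup by total degree, and invoke Theorem \ref{firstthmbosi}(ii); or one observes that $f$, being separately analytic, restricts to an analytic function on every finite-dimensional product slice of $\mathscr A\times\mathscr B$ by the classical finite-dimensional Hartogs theorem, hence is analytic along every complex line through every point of the above neighbourhood, and then applies the standard fact that a locally bounded map with this property into a Banach space is analytic.

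The main obstacle is the last ingredient, the Hartogs-lemma step: passing from boundedness of $f$ on a single small bidisc to uniform convergence of the $a$-expansion on a full neighbourhood of the origin. Controlling the plurisubharmonic functions $\tfrac1n\log\|p_n(a;\,\cdot\,)\|$ — which requires combining the Baire estimate with a propagation argument for sequences of plurisubharmonic functions — is the technical core; by contrast the reductions, the power-series expansion in the first variable, the analyticity of the coefficients in the second variable, and the Baire argument itself are routine once the framework of Theorem \ref{firstthmbosi} and Theorem \ref{secondthmbosi} is in place.
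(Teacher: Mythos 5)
The paper's proof of this theorem is a one-line citation to \cite[Cor.\ 6.2]{bochnaksiciak}, and you likewise defer to that source; so in that sense your proposal takes essentially the same route. Your value-added is a sketch of the underlying Hartogs-type argument: reduce to a neighbourhood of the origin, expand in homogeneous polynomials in the first variable, prove analyticity of the coefficients $b\mapsto p_n(a;b)$ via Cauchy's formula, obtain local boundedness on a small bidisc by Baire category, and then propagate the bound using the Hartogs lemma for the plurisubharmonic functions $\tfrac1n\log\|p_n(a;\cdot)\|$ before concluding via a Gateaux-implies-Fr\'echet argument. The sketch is sound; in particular your Baire set-up (the sets $E_k$, their closedness via lower semicontinuity, and the use of local boundedness of analytic maps to verify $\bigcup_k E_k=\{\|b\|\le r\}$, noting that closed balls are not compact in infinite dimensions) correctly handles the point that would go wrong naively. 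The only place where you leave a genuine gap, which you candidly flag yourself, is the Banach-space Hartogs lemma for sequences of plurisubharmonic functions; that step is precisely what \cite{bochnaksiciak} and \cite{siciakLe} supply, so the logical dependence of your argument on the cited literature is the same as the paper's.
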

\begin{proof}
This is \cite[Cor. 6.2]{bochnaksiciak}.
\end{proof}
Recall that in a complex topological vector space, a neighborhood $W$ of the origin is called \emph{balanced} if and only if for every
$z\in \C$ such that $|z|\leq 1$ and every $w\in W$ we have $z\cdot w\in W$.
\begin{theorem}
\label{fourththmbosi}
Let $\mathscr A$ and $\mathscr B$ be complex Banach spaces, $U\sseq \mathscr A$ be an open set, and $f:U\to \mathscr B$ be 
analytic in $U$. 
Let $u\in U$ and $W$ be a balanced open neighborhood of zero in $\mathscr A$ such that $u+ W\sseq U$.
For every integer $n\geq 0$, set \[
\delta_u^{(n)}f(v)=\frac{d^n}{d\zeta^n}f(u+\zeta \cdot v)\big|_{\zeta=0}.
\] Then the following statements hold.
\begin{itemize}
\item[(i)] For every $n$, $\delta_u^{(n)}f$ is a continuous homogeneous polynomial  of degree $n$. 
\item[(ii)] $
\displaystyle f(u+v)=\sum_{n=0}^\infty \frac{1}{n!}\delta_u^{(n)}f(v)\ \text{for every }v\in W$.
\end{itemize}
\end{theorem}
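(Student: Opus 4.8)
The plan is to reduce everything to the theory of holomorphic functions of a single complex variable with values in a Banach space. Fix $u\in U$ and let $V_u$ be the neighbourhood of the origin and $p_n\colon\mathscr A\to\mathscr B$ the continuous homogeneous polynomials of degree $n$ furnished by analyticity of $f$ at $u$, so that $f(u+w)=\sum_{n\ge 0}p_n(w)$ for $w\in V_u$; after shrinking we may assume $V_u$ contains a ball $\{\,w:\|w\|<\delta\,\}$. First I would prove (i). For arbitrary $v\in\mathscr A$ the map $\zeta\mapsto f(u+\zeta v)$ is defined for $\zeta$ near $0\in\C$ (since $U$ is open) and, being the composition of the affine map $\zeta\mapsto u+\zeta v$ with the analytic map $f$, is holomorphic there, so $\delta_u^{(n)}f(v)$ makes sense for every $v$. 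A direct substitution $\eta=\lambda\zeta$ in the defining derivative shows $\delta_u^{(n)}f(\lambda v)=\lambda^n\,\delta_u^{(n)}f(v)$, i.e. $\delta_u^{(n)}f$ is homogeneous of degree $n$. On the other hand, if $\|v\|<\delta$ then $\zeta v\in V_u$ for $|\zeta|<\delta/\|v\|$, so $f(u+\zeta v)=\sum_{m\ge 0}\zeta^m p_m(v)$ is a convergent $\mathscr B$-valued power series in $\zeta$ on a disc of radius $>1$, and uniqueness of Taylor coefficients yields $\delta_u^{(n)}f(v)=n!\,p_n(v)$ for such $v$. Combining this with homogeneity (every $v$ can be scaled into the ball) gives $\delta_u^{(n)}f=n!\,p_n$ on all of $\mathscr A$; hence $\delta_u^{(n)}f$ is a continuous homogeneous polynomial of degree $n$, which is (i).

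For (ii), fix $v\in W$. Since $W$ is balanced, $\zeta v\in W$ and therefore $u+\zeta v\in U$ for all $\zeta$ in the closed unit disc $\{|\zeta|\le 1\}$. As this disc is compact and $U$ is open, the open set $\{\zeta\in\C:u+\zeta v\in U\}$ contains a disc $\{|\zeta|<1+\eps\}$ for some $\eps>0$. On that disc the function $g(\zeta)=f(u+\zeta v)$ is holomorphic with values in $\mathscr B$, hence is represented by its Taylor series at the origin throughout $\{|\zeta|<1+\eps\}$, in particular at $\zeta=1$. Since $g^{(n)}(0)=\delta_u^{(n)}f(v)$ by definition, we obtain
\[
f(u+v)=g(1)=\sum_{n=0}^\infty\frac{1}{n!}g^{(n)}(0)=\sum_{n=0}^\infty\frac{1}{n!}\delta_u^{(n)}f(v),
\]
which is (ii).

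I expect the only genuinely delicate points to be the two one-variable facts invoked — that the composition $f\circ(\zeta\mapsto u+\zeta v)$ is holomorphic, and that a $\mathscr B$-valued holomorphic function on a disc coincides with its Taylor series there — both of which are classical and may be reduced to the scalar case by composing with continuous linear functionals and applying the Hahn--Banach theorem, or simply cited from \cite{bochnaksiciak}. The structural subtlety worth emphasising is that the local expansion of $f$ is valid only on $V_u$, which need not contain $W$; it is precisely the balancedness of $W$, via the compactness argument on the closed unit disc, that lets one ``slide out'' from the small ball to all of $u+W$ and thereby upgrade the local identity $\delta_u^{(n)}f=n!\,p_n$ to the global Taylor formula. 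Alternatively, the whole statement can be deduced from \cite{bochnaksiciak} in the same style as Theorems \ref{firstthmbosi}--\ref{thirdthmbosi}.
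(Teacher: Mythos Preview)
Your argument is correct. The paper's own ``proof'' is simply a one-line citation to \cite[Prop.~5.5]{bochnaksiciak}, exactly the alternative you mention at the end; you have in effect written out what that citation contains, via the standard reduction to one-variable $\mathscr B$-valued holomorphy and the balancedness/compactness trick to pass from the small ball $V_u$ to all of $W$. One cosmetic remark: in part (i) the observation that the radius $\delta/\|v\|$ exceeds $1$ is not needed---any positive radius suffices to identify Taylor coefficients---but it does no harm.
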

\begin{proof}
The above statements are consequences of \cite[Prop. 5.5]{bochnaksiciak}.
\end{proof}


\begin{thebibliography}{aaaaaaa}




\bibitem[AlHiLa]{allhilglaub} Alldridge, A., Hilgert, J., Laubinger, J., \emph{Harmonic analysis on Heisenberg--Clifford Lie supergroups}, preprint, \textsf{arXiv:1102.4475}.

\bibitem[AlLa]{alldridge} Alldridge, A., Laubinger, M.,  \emph{Infinite-dimensional supermanifolds over arbitrary base fields}, Forum Mathematicum, to appear.


\bibitem[BoSi1]{bochnaksiciak1}
Bochnak, J., Siciak, J.,  \emph{Polynomials and multilinear mappings in topological vector spaces}, 
Studia Math. 39 (1971), 59--76. 

\bibitem[BoSi2]{bochnaksiciak} Bochnak, J., Siciak, J. \emph{Analytic functions in topological vector spaces}, Studia Math. 1971.

\bibitem[BFK]{friedankent2} Boucher, W., Friedan, D., Kent, A.,
\emph{Determinant formulae and unitarity for the $N=2$ superconformal 
algebras in two dimensions or exact results on string compactification},
Phys. Lett. B 172 (1986), no. 3-4, 316--322. 


\bibitem[Bo]{bourbakivariete}
Bourbaki, N., \emph{\'{E}l\'{e}ments de math\'{e}matique. Fasc. XXXIII. Vari\'{e}t\'{e}s diff\'{e}rentielles et analytiques. Fascicule de r\'{e}sultats (Paragraphes 1 \`{a} 7)}, 
Actualit\'{e}s Scientifiques et Industrielles, No. 1333 Hermann, Paris 1967, 97 pp.

\bibitem[CCTV]{varadarajan} Carmeli, C., Cassinelli, G., Toigo, A., Varadarajan, V. S., \emph{Unitary representations of super Lie groups and applications to the classification and multiplet structure of super particles}, 
Comm. Math. Phys. 263 (2006), no. 1, 217--258. 


\bibitem[DuSch]{dunford} Dunford, N., Schwartz, J. T.,
\emph{Linear operators. Part II. Spectral theory. Selfadjoint operators in Hilbert space}, With the assistance of William G. Bade and Robert G. Bartle. Reprint of the 1963 original. Wiley Classics Library.  John Wiley \& Sons, Inc., New York, 1988. pp. i--x, 859--1923 and 1--7. 

\bibitem[DeMo]{delignemorgan} Deligne, P., Morgan, J. W., \emph{Notes on supersymmetry (following Joseph Bernstein)}, Quantum fields and strings: a course for mathematicians, Vol. 1, 2 (Princeton, NJ, 1996/1997), 41--97, Amer. Math. Soc., Providence, RI, 1999.


\bibitem[FSZ]{FSZ81} Ferrara, S., Savoy, C.A., Zumino, B.,
\emph{General massive multiplets in extended supersymmetry.}
Phys. Lett. B 100(5) (1981), 393--398 .

\bibitem[FQS]{friedanqiu}  Friedan, D., Qiu, Z., Shenker, S.,
\emph{Superconformal invariance in two dimensions and the tricritical Ising model},
Phys. Lett. B 151 (1985), no. 1, 37--43. 

\bibitem[GKO]{goddard} Goddard, P., Kent, A., Olive, D., 
\emph{Unitary representations of the Virasoro and super-Virasoro algebras}, 
Comm. Math. Phys. 103 (1986), no. 1, 105--119.



\bibitem[HiPh]{hillephilips}
Hille, E., Phillips, R. S.
\emph{Functional analysis and semi-groups}, 
Third printing of the revised edition of 1957. American Mathematical Society Colloquium Publications, Vol. XXXI. American Mathematical Society, Providence, R. I., 1974. xii+808 pp. 

\bibitem[Io]{iohara} Iohara, K., \emph{Unitarizable highest weight modules of the $N=2$ super Virasoro algebras: untwisted sectors}, Lett. Math. Phys. 91 (2010), no. 3, 289--305.




\bibitem[JaKa]{jakobsen2} Jakobsen, H. P., Kac, V.,
\emph{A new class of unitarizable highest weight representations of infinite-dimensional Lie algebras. II},
J. Funct. Anal. 82 (1989), no. 1, 69--90. 

\bibitem[JaZh]{jarviszhang} Jarvis, P. D., Zhang, R. B.,
\emph{Unitary Sugawara constructions for affine superalgebras},
Phys. Lett. B 215 (1988), no. 4, 695--700. 


\bibitem[Jo]{jorgenson} Jorgensen, P. E. T., Moore R. T., \emph{Operator Commutation Relations}, Math. Appl., D. Reidel Publishing Co., Dordrecht,
1984.

\bibitem[KaTo]{kactodor} Kac, V. G., Todorov, I. T., 
\emph{Superconformal current algebras and their unitary representation}, Comm. Math. Phys. 102 (1985), 
337--347.


\bibitem[Ki]{kirillov} Kirillov, A. A., \emph{Representations of the infinite-dimensional unitary group}, 
Soviet Math. Dokl. 14 (1973), 1355--1358.

\bibitem[Ko]{kostant} Kostant, B., \emph{Graded manifolds, graded Lie theory, and prequantization}, Differential geometrical methods in mathematical physics (Proc. Sympos., Univ. Bonn, Bonn, 1975), pp. 177--306. Lecture Notes in Math., Vol. 570, Springer, Berlin, 1977.

\bibitem[Me]{merigon} Merigon, S., \emph{Integrating representations of Banach-Lie algebras},
J. Funct. Anal. 260 (2011), no. 5, 1463--1475.


\bibitem[Ne1]{khdifferentiable} Neeb, K.-H., \emph{On differentiable vectors for representations of infinite dimensional Lie groups}, J. Funct. Anal. 259 (2010), no. 11, 2814--2855.

\bibitem[Ne2]{neebanalytic} Neeb, K.-H., \emph{On analytic vectors for unitary representations of infinite dimensional Lie groups}, Ann. Inst. Fourier, to appear.

\bibitem[Ne3]{neebconflu} Neeb, K.-H., 
\emph{Semibounded representations and invariant cones in infinite dimensional Lie algebras}, 
Confluentes Math. 2 (2010), no. 1, 37--134.

\bibitem[Ne4]{Ne09} Neeb, K. -H., \emph{Semibounded unitary representations of 
infinite dimensional Lie
groups}, ``Infinite Dimensional Harmonic Analysis IV'',
Eds. J. Hilgert et al, World Scientific, 2009; 209--222 (to appear).

\bibitem[Nel]{nelson} Nelson, E.,
\emph{Analytic vectors}, Ann. of Math. (2) 70 1959 572--615. 



\bibitem[Ol]{olshanski} Ol'shanskii, G. I.,  \emph{Representations of infinite-dimensional classical groups, limits of enveloping algebras, and Yangians}, Topics in representation theory, 1--66, Adv. Soviet Math. 2, Amer. Math. Soc., Providence, RI, 1991.


\bibitem[ReSi]{reedsimon} Reed, M., Simon, B., \emph{Methods of modern mathematical physics. I. Functional analysis}, Academic Press, New York-London, 1972. xvii+325 pp.

\bibitem[Ru]{rudinfncth}
Rudin, W.,
\emph{Function theory in the unit ball of ${\C}^n$},
Grundlehren der Mathematischen Wissenschaften, 
241, Springer--Verlag, New York--Berlin, 1980. xiii+436 pp.

\bibitem[Ru86]{Ru86} Rudin, W., ``Real and Complex Analysis,'' 
McGraw-Hill, New York, 1986 



\bibitem[SaSt]{SS74}
Salam, A., Strathdee, J.,
\emph{Unitary representations of super-gauge symmetries.}
Nucl Phys. B 80 (1974), 499--505 .


\bibitem[Sal]{salmasian} Salmasian, H., \emph{Unitary representations of nilpotent super Lie groups},
Comm. Math. Phys. 297 (2010), no. 1, 189--227. 

\bibitem[Sau]{sauvageot} Sauvageot, F.
\emph{Repr\'{e}sentations unitaires des super-alg\`{e}bres de Ramond et de Neveu-Schwarz.},
Comm. Math. Phys. 121 (1989), no. 4, 639--657. 


\bibitem[Se]{segal}Segal, G., 
\emph{Unitary representations of some infinite-dimensional groups},
Comm. Math. Phys. 80 (1981), no. 3, 301--342. 
\bibitem[Si]{siciakLe}
Siciak J., \emph{A polynomial lemma and analytic mappings in topological vector spaces}, S\'{e}minaire Pierre Lelong (Analyse), (ann\'{e}e 1970--1971), pp. 131--142. Lecture Notes in Math., Vol. 275, Springer, Berlin, 1972.



\bibitem[Va]{varabook} Varadarajan, V. S.,
\emph{An introduction to harmonic analysis on semisimple Lie groups}, Corrected reprint of the 1989 original. Cambridge Studies in Advanced Mathematics, 16. Cambridge University Press, Cambridge, 1999. x+316 pp. 

\end{thebibliography}
\end{document}